\newcommand{\dd}{\mathrm{d}}
\newcommand{\R}{\mathbb{R}}
\newcommand{\N}{\mathbb{N}}
\newcommand{\Z}{\mathbb{Z}}
\newcommand{\E}{\mathbf{E}}
\renewcommand{\P}{\mathbf{P}}
\newcommand{\bone}{\mathds 1}
\newcommand{\Dim}{\mathrm{Dim}}
\newcommand{\side}{\mathrm{side}}
\newcommand{\dist}{\mathrm{dist}}
\newcommand{\HH}{\mathrm{H}}
\newcommand{\MM}{\mathrm{M}}
\theoremstyle{plain}
\newtheorem{theorem}{Theorem}[section]
\newtheorem{lemma}[theorem]{Lemma}
\newtheorem{proposition}[theorem]{Proposition}
\newtheorem{corollary}[theorem]{Corollary}
\newtheorem{condition}{Condition}
\newtheorem{theoremA}{Theorem}
\newcommand{\settheoremtag}[1]{
	\let\oldthecondition\thecondition
	\renewcommand{\thecondition}{#1}
	\g@addto@macro\endcondition{
		\addtocounter{condition}{-1}
		\global\let\thecondition\oldthecondition}
}
\theoremstyle{remark}
\newtheorem{remark}[theorem]{Remark}
\newcommand{\cals}{{\cal S}}
\newcommand{\calf}{{\cal F}}
\newcommand{\cale}{{\cal E}}
\newcommand{\calq}{{\cal Q}}
\newcommand{\al}{{\alpha}}
\newcommand{\la}{{\lambda}}
\newcommand{\La}{{\Lambda}}
\newcommand{\eps}{{\varepsilon}}
\newcommand{\ga}{{\gamma}}
\newcommand{\Ga}{{\Gamma}}
\newcommand{\vp}{{\varphi}}
\newcommand{\si}{{\sigma}}
\newcommand{\Om}{{\Omega}}
\newcommand{\ov}{\overline}
\newcommand{\wh}{\widehat}
\newcommand{\wt}{\widetilde}
\newcommand{\cc}{\mathrm{c}}
\newcommand{\Leb}{\mathrm{Leb}}
\newcommand{\bthm}{\begin{theorem}}
	\newcommand{\ethm}{\end{theorem}}
\newcommand{\bthmA}{\begin{theoremA}}
	\newcommand{\ethmA}{\end{theoremA}}
\newcommand{\bcor}{\begin{corollary}}
	\newcommand{\ecor}{\end{corollary}}
\newcommand{\blem}{\begin{lemma}}
	\newcommand{\elem}{\end{lemma}}
\newcommand{\bprop}{\begin{proposition}}
	\newcommand{\eprop}{\end{proposition}}
\newcommand{\bcond}{\begin{condition}}
	\newcommand{\econd}{\end{condition}}
\newcommand{\bdf}{\begin{definition}}
	\newcommand{\edf}{\end{definition}}
\newcommand{\bex}{\begin{example}}
	\newcommand{\eex}{\end{example}}
\newcommand{\brem}{\begin{remark}}
	\newcommand{\erem}{\end{remark}}
\newcommand{\bpr}{\begin{proof}}
	\newcommand{\epr}{\end{proof}}
\newcommand{\benu}{\begin{enumerate}}
	\newcommand{\eenu}{\end{enumerate}}
\newcommand{\beq}{\begin{equation}}
	\newcommand{\eeq}{\end{equation}}
\newcommand{\bit}{\begin{itemize}}
	\newcommand{\eit}{\end{itemize}}
\newcommand{\bff}{\textbf}
\numberwithin{equation}{section}
\begin{document}
	\begin{frontmatter}
		\title{A landscape of peaks: The intermittency islands of the  stochastic 
			heat equation with L\'evy noise}
		\runtitle{Intermittency islands of the SHE}
		
		\begin{aug}
			\author[A]{\fnms{Carsten} \snm{Chong}\ead[label=e1]{carsten.chong@columbia.edu}},
			\and
			\author[B]{\fnms{P\'eter} \snm{Kevei}\ead[label=e2]{kevei@math.u-szeged.hu}}
			
			\address[A]{Department of Statistics,
				Columbia University,
				\printead{e1}}
			
			\address[B]{Bolyai Institute,
				University of Szeged
				\printead{e2}}
		\end{aug}

\begin{abstract}
We show that the spatial profile of the solution to the stochastic heat equation features multiple layers of intermittency islands if the driving noise is non-Gaussian. On the one hand, as expected, if the noise is sufficiently heavy-tailed, the largest peaks of the solution will be taller under multiplicative than under additive noise. On the other hand, surprisingly, as soon as the noise has a finite moment of order $\frac2d$, where $d$ is the spatial dimension, the largest peaks will be of the same order for both additive and multiplicative noise, which is in sharp contrast to the behavior of the solution under Gaussian noise. However, in this case, a closer inspection reveals a second layer of peaks, beneath the largest peaks, that is exclusive to multiplicative noise and that can be observed by sampling the solution on the lattice. Finally, we compute the macroscopic Hausdorff and Minkowski dimensions of the intermittency islands of the solution. Under both additive and multiplicative noise, if it is not too heavy-tailed, the largest peaks will be self-similar in terms of their large-scale multifractal behavior. But under multiplicative noise, this type of self-similarity is not present in the peaks observed on the lattice.
\end{abstract}

	 	\begin{keyword}[class=MSC]
	 	\kwd[Primary ]{60H15}
	 	\kwd{60F15}
	 	\kwd{60G70}
	 	\kwd[; secondary ]{60G51}
	 	\kwd{28A78}
	 	\kwd{28A80}
	 \end{keyword}
	 
	 \begin{keyword}
	 	\kwd{almost-sure asymptotics}
	 	\kwd{integral test}
	 	\kwd{macroscopic Hausdorff dimension}
	 	\kwd{macroscopic Minkowski dimension}
	 	\kwd{multifractal spectrum}
	 	\kwd{Poisson noise}
	 	\kwd{stable noise}
	 	\kwd{stochastic PDE}
	 \end{keyword}
	 
 \end{frontmatter}

\tableofcontents

\section{Introduction}

Consider the stochastic heat equation (SHE)
\beq\label{eq:SHE}  \partial_t Y(t,x)=\frac{1}{2} \Delta Y (t,x)+ \si(Y(t,x))\dot \La(t,x),\qquad (t,x)\in(0,\infty)\times\R^d,  \eeq
driven by a  space-time white noise $\dot\La$, 
where  either  $\si(x)=1$ and $Y(0,x)=0$ (the case of \emph{additive noise}) or 
$\si(x)=x$ and $Y(0,x)=1$ (the case of \emph{multiplicative noise}). 
In this work, we fix $t>0$ and explore the macroscopic behavior of $Y$ as $\lvert 
x\rvert\to\infty$, where $\lvert\cdot\rvert$ denotes the Euclidean norm on $\R^d$. If 
$d=1$ and $\dot\La$ is Gaussian, it is well known that for  fixed $t>0$,
\beq\label{eq:Gauss} \begin{dcases} \limsup\limits_{\lvert x \rvert\to\infty} 
\dfrac{Y(t,x)}{(\log |x|)^{1/2}} = \biggl ( \frac{4t}{\pi} \biggr)^{\frac14} &\text{if 
} \si(x)=1,\\
\limsup\limits_{\lvert x \rvert\to\infty} \dfrac{\log Y(t,x)}{(\log |x|)^{2/3}} =  
\biggl( \frac{9t}{32}\biggr )^{\frac13} &\text{if } \si(x)=x\end{dcases}\eeq
almost surely; see \cite{Chen15,Conus12,Khoshnevisan17}. If $\dot\La$ follows a non-Gaussian distribution, in which case $\dot\La$  is called a \emph{Lévy noise}, 
the results we obtain are rather unexpected. To give a flavor of them, let us suppose in this introductory part that $\dot \La$ is a Lévy noise with Lévy measure 
\beq\label{eq:la} \la((-\infty,1])=0,\quad \la((z,\infty))= z^{-\al},\quad z>1, \eeq
for some $\al>0$. In this case, 
$\dot \La$ is a compound Poisson noise with Pareto-distributed weights.
If $\al$ is small, the noise is relatively heavy-tailed; if $\al$ is large, the noise is relatively light-tailed.
In the latter case, the analogous result to \eqref{eq:Gauss} reads as follows.
\bthmA \label{thm:A} Suppose that $\al>\frac 2d$. If $f: (0,\infty) \to (0,\infty)$ is 
nondecreasing, then for both additive and multiplicative noise almost surely,
\beq\label{eq:or}
\limsup_{x \to \infty} 
\frac{\sup_{\lvert y\rvert \leq x} Y(t,y)}{f(x)} = \infty
\qquad \text{or} \qquad 
\limsup_{x \to \infty} 
\frac{\sup_{\lvert y\rvert \leq x} Y(t,y)}{f(x)} = 0,
\eeq
according to whether the integral 
\beq\label{eq:int2}\int_1^\infty x^{d-1} f(x)^{-\frac 2d}\, \dd x\eeq diverges or converges.
\ethmA

An obvious difference to \eqref{eq:Gauss} is
the fact that the spatial asymptotics of the solution are governed by an integral test. But this is not the most surprising part about Theorem~\ref{thm:A}; a similar integral test has been found in \cite{CK2} for the behavior of $Y$ (with $\si(x)=1$) as $t\to\infty$. What is most striking in view of \eqref{eq:Gauss} is that the largest peaks of the solution  at a given time $t$ are of \emph{the same order} for both multiplicative and additive Lévy noise! It has been shown in \cite{Berger21b} that the solution to the SHE with multiplicative Lévy noise is always intermittent in all dimensions $d\geq1$, regardless of the details of $\dot\La$. While intermittency is an asymptotic concept that describes localization   on an exponential scale  as $t\to\infty$,  it is widely believed that the largest peaks of an intermittent process at finite times  should already exceed those of a non-intermittent process (e.g., the solution to \eqref{eq:SHE} with additive noise). Theorem~\ref{thm:A} shows that this belief is incorrect in general.

This being said, if $\dot \La$ is sufficiently heavy-tailed, multiplicative noise does produce higher peaks, even at finite times. 
 \bthmA\label{thm:B} Suppose that $\al<\frac 2d$ and let $\theta_\al=1-\frac d2(\al-1)$. 
\benu
\item[(i)] In the case of additive noise, we have the two possibilities in \eqref{eq:or} depending on whether the following integral diverges or converges:
\beq\label{eq:int3}\int_1^\infty x^{d-1} f(x)^{-\al}\, \dd x.\eeq 
\item[(ii)] In the case of multiplicative noise,    there are $0<L_\ast\leq L^\ast<\infty$ such that for all $L>L^\ast$,
\beq\label{eq:ub1} \limsup_{  x \to\infty} \frac{\sup_{\lvert y\rvert\leq x} Y(t,y)}{x^{d/\al} e^{L(\log x)^{1/(1+\theta_\al)}}} =0\qquad\text{a.s.}, \eeq
while for all $L<L_\ast$,
\beq\label{eq:lb1} \limsup_{  x \to\infty} \frac{\sup_{\lvert y\rvert\leq x} Y(t,y)}{x^{d/\al} e^{L(\log x)^{1/(1+\theta_\al)}}} =\infty\qquad\text{a.s.} \eeq
\eenu
\ethmA 
So the gain in the multiplicative case is a factor  roughly of order  $e^{L(\log x)^{1/(1+\theta_\al)}}$.
It follows from the previous two theorems  that the highest peaks in the solution to the SHE are taller for multiplicative than for additive noise if and only if $\dot\La$ has very heavy tails. If $\al>\frac2d$, Theorem~\ref{thm:A} seems to suggest that whether \eqref{eq:SHE} is subjected to multiplicative and additive noise cannot be distinguished based on the macroscopic behavior of the solution at a given time point. But this turns out to be false, too: in fact, there is a second layer of peaks, beneath the largest peaks studied in Theorem~\ref{thm:A}, that is exclusive to the solution under multiplicative noise. This second layer of peaks can be observed, for example, by sampling on the discrete lattice $\Z^d$ instead of $\R^d$.
\bthmA\label{thm:C} Consider $Y$ on the lattice $\Z^d$.
\benu
	\item[(i)] If $\al>\frac2d$ and $\si(x)=1$,  then almost surely, for any nondecreasing $f: (0,\infty) \to (0,\infty)$,
	\[
	\limsup_{x \to \infty} 
	\frac{\sup_{y\in\Z^d,\lvert y\rvert \leq x} Y(t,y)}{f(x)} = \infty
	\qquad \text{or} \qquad 
	\limsup_{x \to \infty} 
	\frac{\sup_{y\in \Z^d,\lvert y\rvert \leq x} Y(t,y)}{f(x)} = 0,
	\]
	according to whether the integral \eqref{eq:int2} diverges or converges.
	\item[(ii)] If $\al\in(\frac2d,1+\frac2d)$ and $\si(x)=x$, then  there are $0<M_\ast\leq M^\ast<\infty$ such that for  all $M>M^\ast$,
	\beq\label{eq:ub2}  \limsup_{  x \to\infty} \frac{\sup_{y\in\Z^d, \lvert y\rvert\leq x} Y(t,y)}{x^{d/\al} e^{M(\log x)^{1/(1+\theta_\al)}}} =0\qquad\text{a.s.}, \eeq
	while for   $M<M_\ast$,
	\beq\label{eq:lb2} \limsup_{  x \to\infty} \frac{\sup_{y\in\Z^d,\lvert y\rvert\leq x} Y(t,y)}{x^{d/\al} e^{M(\log x)^{1/(1+\theta_\al)}}} =\infty\qquad\text{a.s.} \eeq
\item[(iii)]
If $\al\geq1+\frac2d$ and $\si(x)=x$, then there are $0<M_\ast\leq M^\ast<\infty$ such that for   $M>M^\ast$,
	\beq\label{eq:ub3} \limsup_{  x \to\infty} \frac{\sup_{y\in\Z^d, \lvert y\rvert\leq x} Y(t,y)}{x^{d^2/(2+d)} e^{M(\log x)(\log\log\log x)/\log\log x}} =0\qquad\text{a.s.} \eeq
	while for   $M<M_\ast$,
	\beq\label{eq:lb3} \limsup_{  x \to\infty} \frac{\sup_{y\in\Z^d, \lvert y\rvert\leq x} Y(t,y)}{x^{d^2/(2+d)} e^{M(\log x)(\log\log\log x)/\log\log x}} =\infty\qquad\text{a.s.} \eeq
	\item[(iv)] If $\al<\frac2d$, then the statements of Theorem~\ref{thm:B} remain valid for $\sup_{y\in\Z^d,\lvert y\rvert\leq x} Y(t,y)$.
\eenu
\ethmA

The behavior described in part (iii) of Theorem~\ref{thm:C} is particularly interesting. We do not know of any other natural model with this type of growth asymptotics. 

\subsection{Review of literature}

Let us put Theorems~\ref{thm:A}--\ref{thm:C} in  the context of the existing literature. 
Until a few years ago, the majority of works on the SHE driven by non-Gaussian Lévy noise  
 focused on existence, uniqueness and regularity of  solutions, usually assuming  
strong moment assumptions on the noise or studying specific noises only (e.g., 
$\al$-stable noise); see   
\cite{Balan14,Chong,Chong1,CDH,Kosmala21,Mueller98,Mytnik02,Peszat07, SLB98}. More 
recently, building on \cite{Berger21a},  the paper \cite{Berger21b} derived the most 
general existence and uniqueness conditions known up to date for the SHE with 
multiplicative Lévy noise (which are necessary and sufficient for $d=1,2$ and almost 
optimal for $d\geq3$). Furthermore, extending \cite{CK1}, it was further shown in 
\cite{Berger21b}  that the solution to the SHE with multiplicative Lévy noise is 
\emph{strongly intermittent} in all dimensions for all non-trivial Lévy noises. (This is 
another unexpected feature of \eqref{eq:SHE}, because if $d\geq3$, intermittency does not 
always occur if one considers the SHE on a lattice \cite{Ahn92b,Ahn92,Carmona94} or the 
SHE with short-range correlated Gaussian noise \cite{Chen19,Lacoin11}.) 
For the SHE with additive Lévy noise, the authors showed in \cite{CK2} that as 
$t\to\infty$, $Y(t,x)$ for fixed $x$ satisfies a weak but violates a strong law of large 
numbers, a property  referred to as \emph{additive intermittency}. Finally, by showing 
that directed polymers in  heavy-tailed environments have the SHE with Lévy noise as a 
scaling limit in the intermediate disorder regime, \cite{Berger21} established a first 
discrete statistical mechanics model that rescales to a Lévy-driven SHE in continuous 
space and time (the analogous result for convergence to the SHE with Gaussian noise was 
shown in \cite{Alberts14}).  For results on the stochastic  wave equation with Lévy noise, we refer 
to \cite{Balan21, Balan16}.
                     
\subsection{Overview of the remaining paper}

After a rigorous introduction to the SHE with Lévy noise in Section~\ref{sec:prelim},  we state and prove in Section~\ref{sec:tail} tight upper and lower bounds on the probability tails of the solution $Y(t,x)$ and of its local spatial supremum $\sup_{x\in Q} Y(t,x)$, where $Q\in\calq$ and $\calq=\{x+(0,1)^d: x\in\R^d\}$ is the collection of all unit cubes in $\R^d$. Theorems~\ref{thm:sol-ht} and \ref{thm:sup-ht} cover the results when the tail of the noise is  heaviest, while Theorems~\ref{thm:sol-lt} and \ref{thm:sup-lt} contain the statement when the tail is lighter. These tail bounds are the main technical achievements of the paper; we will give more background on the approach we take to prove them in Section~\ref{sec:tail}. 
In Section~\ref{sec:peaks}, we will then use these tail bounds to prove Theorems~\ref{thm:peak-R-lt}--\ref{thm:peak-Z}, which extend Theorems~\ref{thm:A}--\ref{thm:C} to general Lévy noises. In Section~\ref{sec:dim}, we further show how the tail bounds of Section~\ref{sec:tail} can be used to determine/bound the macroscopic Hausdorff and Minkowski dimensions of the peaks of $Y$. For the SHE with Gaussian noise, this program has been carried out in \cite{Khoshnevisan17}. In the Lévy setting, multiple scales appear: in the case of additive noise, or in the case of multiplicative noise if the noise is not too heavy-tailed, we show in Theorems~\ref{thm:dim} and \ref{thm:dim2} that the largest peaks of the solution are not only multifractal in the sense of \cite{Khoshnevisan17} but actually \emph{self-similar} in terms of their multifractal behavior. At the same time, in the multiplicative case, the largest peaks  under a very heavy-tailed noise or the peaks observed on the lattice $\Z^d$ for any Lévy noise are multifractal but not self-similar. This is a consequence of Theorems~\ref{thm:dim} and~\ref{thm:dim3}. Finally, the \hyperref[appn]{Appendix} contains some technical results needed in the proofs. 

Except for Section~\ref{sec:dim} where we treat both additive and multiplicative noise, we only consider and prove results for the case of multiplicative Lévy noise in Sections~\ref{sec:tail} and \ref{sec:peaks}. In the case of additive noise, we can obtain exact tail asymptotics using the theory of regular variation, which is why we have deferred them to a companion paper \cite{islands_add}. In particular,  the parts of Theorem~\ref{thm:A}--\ref{thm:C} concerning additive noise also follow from \cite{islands_add}.

In what follows, we use $C$, $C_1$, $C_2$ etc.\ to denote constants which do not depend on any important parameters and whose values may change from line to line. Furthermore, if we plug in a real number $x$ for an integer-valued index (e.g., $\sum_{i=1}^x$ or $Y^{(x)}$ if $Y^{(n)}$ is a sequence indexed by $n\in\N$), we always mean plugging in $\lfloor x\rfloor$, the integer-part of $x$.

\section{Preliminaries}\label{sec:prelim}

Throughout this paper, we assume that $\dot\La$ is a space-time white noise on $\R^{1+d}$, that is, a stationary random generalized function that gives independent values when applied to test functions of disjoint support. It is well known (see \cite[Ch.\ 4.4]{Gelfand64} and \cite{Rajput89}) that $\dot \La$ is infinitely divisible in this case with Lévy--It\^o decomposition
\beq\label{eq:La}\begin{split} \La(\dd t,\dd x) &= b\,\dd t\,\dd x+ v \, W(\dd t,\dd x) + \int_{(-1,1)} z \,(\mu-\nu)(\dd t,\dd x,\dd z)\\
	&\quad + \int_{(-1,1)^\cc} z \, \mu(\dd t,\dd x,\dd z),\end{split}\eeq
where $b\in\R$, $v\in[0,\infty)$, $\dot W$ is a Gaussian space-time white noise, and $\mu$ is a Poisson random measure on $\R^{1+d}$ with intensity measure $\nu=\dd t\otimes\dd x \otimes \la(\dd z)$, where $\la$, the \emph{Lévy measure} of $\dot \La$, satisfies $\int_\R (1\wedge z^2)\,\la(\dd z)<\infty$. The last two terms in \eqref{eq:La} will be denoted by $\La_{<}(\dd t,\dd x)$ and $\La_\geq(\dd t,\dd x)$, respectively. In this paper, we assume 
\beq\label{eq:bv0} b=0\qquad\text{and}\qquad v=0.\eeq
The first assumption is no restriction, because $b\neq0$ would only change the solution 
$Y$ to \eqref{eq:SHE} by an additive or multiplicative 
constant, depending on whether $\si(x)=1$ or $\si(x)=x$ (cf.\ \cite[Sect.~3.3]{CK1}).  Regarding the second 
assumption, note  that \eqref{eq:SHE} has a mild solution for $v\neq0$ only if $d=1$, in 
which case \eqref{eq:Gauss} suggests---and one can modify the proofs in this paper to show 
this rigorously---that the macroscopic behavior of $x\mapsto Y(t,x)$ for fixed $t$ is 
dominated by the jump part. In addition to \eqref{eq:bv0}, we   further assume that 
$\dot\La$ is   spectrally positive, that is, 
\beq\label{eq:pos} \la((-\infty,0))=0. \eeq
The  condition \eqref{eq:pos} is needed to guarantee positivity of the solution $Y$ in the case of multiplicative noise \cite[Thm.~2.1]{Berger21b}, which is crucial for the lower bound proofs in this paper. In principle, all upper bound results remain valid if we consider signed noise, but we refrain from adding this extra bit of generality to keep the exposition simple.

From now on until the end of Section~\ref{sec:peaks}, we only consider the case of multiplicative noise, that is, we will assume
\beq\label{eq:mult} \si(x)=x.  \eeq
In this case, a predictable process $Y(t,x)$ is called a \emph{mild solution} to \eqref{eq:SHE} if for all $(t,x)\in(0,\infty)\times\R^d$,
\beq\label{eq:PAM} Y(t,x)=1+\int_0^t \int_{\R^d} g(t-s,x-y)Y(s,y)\,\La(\dd s,\dd y)\qquad\text{a.s.}, \eeq
where
$
g(t,x) =  (2 \pi  t)^{-d/2} e^{- {|x|^2}/(2  t)}\bone_{\{t>0\}}$
is the heat kernel in dimension $d$. In Section~\ref{sec:dim}, where we  consider both additive and multiplicative noise again, we will use the notation
\beq\label{eq:add} Y_+(t,x)=\int_0^t\int_{\R^d} g(t-s,x-y)\,\La(\dd s,\dd y)\eeq
for the solution to \eqref{eq:SHE} with additive Lévy noise.

Let us introduce the following truncated moments of the Lévy measure $\la$:
\beq\label{eq:mu} \mu_p(\la)=\int_{(0,\infty)} z^p\,\la(\dd z), \quad m_p(\la)=\int_{(0,1)} z^p\,\la(\dd z),\quad M_p(\la)=\int_{[1,\infty)} z^p\,\la(\dd z)  \eeq
and 
\beq\label{eq:mu-log} m^{\log}_p(\la)=\int_{(0,1)} z^p\lvert \log z\rvert\,\la(\dd z),\qquad m^{(\log)}_p(\la) = \int_{(0,1)} z^p\lvert \log z\rvert^{\bone_{\{p=1\}}}\,\la(\dd z). \eeq
Under the assumption
\beq\label{eq:ass}  \int_{[1,\infty)} (\log   z )^{\frac d2} \,\la(\dd z) + m^{\log}_{1+2/d}(\la)\bone_{\{d\geq2\}}<\infty, \eeq
it was shown in \cite{Berger21b} (see Thm.\ 2.5, Rem.\ 2.6 and the discussion in Sect.\ 3.3) that 
\beq\label{eq:Yst} u(s,y;t,x) = g(t-s,x-y)+ \sum_{N=1}^\infty \int_{((s,t)\times\R^d)^N}  \prod_{i=1}^{N+1} g(\Delta t_i,\Delta x_i)\prod_{j=1}^N\La(\dd t_j,\dd x_j)\eeq
is well defined and finite,
where $\Delta t_i=t_i-t_{i-1}$, $\Delta x_i=x_i-x_{i-1}$, $(t_{N+1},x_{N+1})=(t,x)$ and $(t_0,x_0)=(s,y)$. Furthermore, $u(s,y;\cdot,\cdot)$ is a mild solution to \eqref{eq:SHE}
on $(s,\infty)\times\R^d$ with $\si(x)=x$ and initial condition $u(s,y;s,\cdot)=\delta_y$. In what follows, we write  $Y_<(t,x)$ and $u_<(s,y;t,x)$ for the process obtained by substituting $\La_<$  for $\La$ in  \eqref{eq:PAM} and  \eqref{eq:Yst}, respectively. We let  $u_<(s,y;t,x)=0$ whenever $s\geq t$ and $Y_<(t,x)=0$ whenever $t< 0$. Then, similarly to \cite[Eq.\ (8.4)]{Berger21b},  a mild solution to \eqref{eq:SHE} is given by
\beq\label{eq:separate} Y(t,x)=\sum_{N=0}^\infty \int_{((0,t)\times\R^d)^N} Y_<(t_1,x_1) \prod_{i=2}^{N+1} u_<(t_{i-1},x_{i-1};t_i,x_i)\prod_{j=1}^N\La_\geq(\dd t_j,\dd x_j), \eeq
where the term for $N=0$ is $Y_<(t,x)$.
By the independence properties of $\La$, for any fixed $t_1<\dots<t_N$, we have that  $Y_<(t_1,x_1)$, $u_<(t_1,\cdot;t_2,\cdot),\dots,u_<(t_N,\cdot;t,\cdot)$ are independent of each other and also independent of $\La_\geq$. Note that \eqref{eq:ass} is necessary and sufficient for the existence of solutions to \eqref{eq:SHE} in dimensions $d=1,2$ and close to optimal in dimensions $d\geq3$  \cite{Berger21b}.

\section{Tail bounds on the solution and its local supremum}\label{sec:tail}

The main device to obtain Theorems~\ref{thm:A}--\ref{thm:C} (and their generalizations) are sharp probability tail bounds on $Y(t,x)$ and $\sup_{x\in Q} Y(t,x)$, where $Q$ is a unit cube in $\R^d$. In all results, we need to distinguish between a heavy-tailed and a light-tailed scenario, which motivates the following definitions depending on a parameter $\al$:

 \settheoremtag{(H-$\al$)}
\bcond \label{cond:ht}We have \eqref{eq:bv0} and \eqref{eq:pos}. Moreover, we have $m_{1+2/d}^{\log}(\la)<\infty$ and  $\la([R,\infty))\sim C  R^{-\al}$  for some   $C\in(0,\infty)$ as $R \to\infty$.
\econd

 \settheoremtag{(L-$\al$)}
\bcond \label{cond:lt} We have \eqref{eq:bv0} and \eqref{eq:pos}. Moreover, we have $0<m_{1+2/d}^{\log}(\la)+M_{\al}(\la)<\infty$.
\econd

Note that the notion of heavy- versus light-tailed is relative (and $\al$-dependent). In particular,   Condition~\ref{cond:lt} really only means that $\La$  has a finite moment of order $\al$ (in which case $\La$ may still be  heavy-tailed in the classical sense). In the following, we are going to prove tail bounds for two different processes (the solution and its local supremum), for each of which there will be a heavy-tailed case (Theorems~\ref{thm:sol-ht} and \ref{thm:sup-ht}) and a light-tailed case (Theorems~\ref{thm:sol-lt} and \ref{thm:sup-lt}). Each result in turn will involve an upper and a lower bound. Let us provide a short overview of the proof techniques:
\bit
\item All upper bounds, except for the tail of the local supremum of $Y$ in the light-tailed case (Theorem~\ref{thm:sup-lt}), are obtained by combining Markov's inequality with sharp moment estimates and then optimizing the exponent.
\item The upper bound in Theorem~\ref{thm:sup-lt} \emph{cannot} be obtained in this way. Instead, we first show that only ``large close'' jumps (in a certain sense) contribute to the tail and then use the explicit Poisson structure of the atoms to bound their tail behavior. For this part, we also use a decoupling inequality for tail probabilities (Lemma~\ref{lem:dec}) that is of independent interest.
\item For the lower bounds, the level of difficulty is reversed: for the supremum in the light-tailed case (Theorem~\ref{thm:sup-lt}), a  single (well-chosen) jump suffices to produce the tail. 
\item In all other cases, the main strategy is to find chains of close atoms of beneficial length $N$. An optimal number $N$ has to be sufficiently large (to be able to produce a tall peak) but at the same time not too large (such that the probability of having a chain of that length is not too small). It turns out that in the heavy-tailed case, for both the solution (Theorem~\ref{thm:sol-ht}) and the supremum (Theorem~\ref{thm:sup-ht}), one needs to consider a whole range of lengths $N$, while for the solution in the light-tailed case  (Theorem~\ref{thm:sol-lt}), considering a single length $N$ (depending on the size of the desirable peak, of course) is enough. An important observation is that for these lower bound proofs, it is crucial that we consider \eqref{eq:SHE} on an unbounded domain. The chains of atoms that lead to a tail event have to stretch arbitrarily far into space; on a bounded domain, the tail asymptotics of the solution would be different; see Remark~\ref{rem:unbounded}.
\eit

\subsection{Tail bounds for the solution} 

Let us  begin with   heavy-tailed noise.

\bthm\label{thm:sol-ht} Assume Condition~\ref{cond:ht} for some $\al\in(0,1+\frac 2d)$. For every $t>0$, there are constants $C_1,C_2\in(0,\infty)$ such that for all $x\in \R^d$ and $R>1$,
\beq\label{eq:sol-ht} C_1R^{-\al}e^{C_1 (\log R)^{1/(1+\theta_\al)}}\leq \P(  Y(t,x)   >R)\leq  C_2R^{-\al}e^{C_2 (\log R)^{1/(1+\theta_\al)}}.\eeq
\ethm

\bpr  \bff{Step 1: Upper bound} 

\smallskip
\noindent Let $\E_<$ and $\E_\geq$ denote  conditional expectation given $\La_\geq$ and 
$\La_<$, respectively. First suppose that $\al\in(0,1]$ and let $p\in(0,\al)$. Because 
$\La_\geq$ is a discrete measure, we can use the elementary inequality $\lvert \sum 
a_i\rvert^p\leq \sum \lvert a_i\rvert^p$, \eqref{eq:separate}, and the fact that 
$\E[X]=\E[\E_\geq[X]]$ to obtain 
\begin{align*} \E[  Y(t,x) ^p]&\leq \mathtoolsset{multlined-width=0.8\displaywidth}\begin{multlined}[t] \E[Y_<(t,x)^p]+\E\Biggl[\sum_{N=1}^\infty\int_{((0,t)\times\R^d\times [1,\infty))^N} Y_<(t_1,x_1)^p \\
	\times \prod_{i=2}^{N+1}   u_<(t_{i-1},x_{i-1};t_i,x_i) ^p\prod_{j=1}^N   z_j ^p \,\dd t_j\,\dd x_j\,\la(\dd z_j)\Biggr]\end{multlined}\\
&=\mathtoolsset{multlined-width=0.8\displaywidth}\begin{multlined}[t] \E[Y_<(t,x)^p]+ \sum_{N=1}^\infty M_p(\la)^N \int_{((0,t)\times\R^d)^N} \E [Y_<(t_1,x_1)^p ]\\
\times
\prod_{i=2}^{N+1} \E[  u_<(t_{i-1},x_{i-1};t_i,x_i) ^p]\prod_{j=1}^N  \,\dd t_j\,\dd x_j,\end{multlined}
\end{align*}
where $M_p(\la)=\int_{[1,\infty)}   z ^p\,\la(\dd z)$.
By Jensen's inequality, 
\begin{equation} \label{eq:p<1aux}
\begin{split}
\E[Y_<(t,x)^p] & \leq \E[Y_<(t,x)]^p = 1,\\
\E[  u_<(t_{i-1},x_{i-1};t_i,x_i) ^p]& \leq \E[  u_<(t_{i-1},x_{i-1};t_i,x_i) 
]^p = g(\Delta t_i,\Delta x_i)^p.
\end{split}
\end{equation}
Thus, recalling that $\theta_p=1-(p-1)\frac d2$, we have 
\begin{equation}\label{eq:Yp}\begin{split} \E[  Y(t,x) ^p]&\leq  
	1+ \sum_{N=1}^\infty M_p(\la)^N \int_{((0,t)\times\R^d)^N}  \prod_{i=2}^{N+1} 
g(\Delta t_i,\Delta x_i)^p\prod_{j=1}^N  \,\dd t_j\,\dd x_j\\
	&= 	1+ \sum_{N=1}^\infty (CM_p(\la))^N \int_{(0,t)^N} \bone_{\{t_1<\dots<t_N\}} \prod_{i=2}^{N+1} (\Delta t_i)^{-(p-1)\frac d2}\prod_{j=1}^N  \,\dd t_j\\
	&= \sum_{N=0}^\infty \frac{(CM_p(\la)\Ga(\theta_p)t^{\theta_p})^N}{\Ga(1+\theta_pN)}. 
\end{split}
\end{equation}
The first equality follows by noting that $g$ is a Gaussian density, while 
the second equality follows from \cite[Lemma~3.5]{Chong1}. By Lemma~\ref{lem:Stirling} and 
the fact that $0<\theta_p\leq 1+\frac d2$, we conclude that 
\beq\label{eq:mom-ub} \E[  Y(t,x) ^p] \leq C e^{(CM_p(\la)\Ga(\theta_p))^{1/\theta_p}t}\eeq
for some constant that does not depend on $p$.

The following tail bound is now an immediate consequence of Markov's inequality:
$$ \P(  Y(t,x)  >R)\leq CR^{-p} e^{(CM_p(\la)\Ga(\theta_p))^{1/\theta_p}t}\leq CR^{-p} e^{(CM_p(\la)\Ga(\theta_p))^{1/\theta_\al}t}$$
for all $p\in(0,\al)$. Under the tail assumption on $\la$, we have that $M_p(\la)\sim C(\al-p)^{-1}$. Inserting this expression into the previous line and choosing $p=\al-(\theta_\al\log R)^{-1/(1+1/\theta_\al)}$, we obtain that
$$ \P(  Y(t,x)  >R)\leq CR^{-\al}e^{C_\al(\log R)^{1-1/(1+1/\theta_\al)}} e^{C_{\al,t}(\log R)^{1/(\theta_\al+1)}}=CR^{-\al}e^{C_{\al,t}(\log R)^{1/(1+\theta_\al)}},$$
which completes the proof if $\al\in(0,1]$. 

If $\al\in(1,1+\frac 2d)$, note that $Y(t,x)=e^{m t}\ov Y(t,x)$ where $m=\int_{(-1,1)^c} z\,\la(\dd z)$ is the mean of $\La$ and $\ov Y(t,x)$ is the solution to \eqref{eq:PAM} when $\La$ is replaced by $\ov \La=\La - m\,\Leb$. Similarly to \eqref{eq:separate}, and with obvious notation, we have that 
\beq\label{eq:separate2} \ov Y(t,x)= \sum_{N=0}^\infty 
\int_{((0,t)\times\R^d)^N}  \ov Y_<(t_1,x_1) \prod_{i=2}^{N+1} \ov 
u_<(t_{i-1},x_{i-1};t_i,x_i)\prod_{j=1}^N\ov\La_\geq(\dd t_j,\dd x_j), \eeq
where the zeroth-order term in $\ov Y_<(t,x)$.
Thus, in dimensions $d\geq2$, using   the Burkholder--Davis--Gundy (BDG) inequality, we have for all $p\in(1,\al)$ that
\beq\label{eq:Ybarp}\begin{split}
\E[  \ov Y(t,x) ^p]^{\frac1p}&\leq  \sum_{N=0}^\infty (CM_p(\la)^{\frac 1p})^N \\
& \times \Biggl(\int_{((0,t)\times\R^d)^N} \E[\ov Y_<(t_1,x_1)^p] \prod_{i=2}^{N+1} \E[ \ov u_<(t_{i-1},x_{i-1};t_i,x_i) ^p]\prod_{j=1}^N \dd t_j\,\dd x_j\Biggr)^{\frac 1p},\raisetag{-3\baselineskip}
\end{split}
\eeq
where $C\in(0,\infty)$ is a constant that can be chosen uniformly for all $p$   close 
enough to $\al$ ($C$ may depend on $\al$). 
By \cite[Cor.\ 6.5]{Berger21b} (combined with Minkowski's integral inequality
together with (1.17) in \cite{Berger21b})
and its proof as well as Lemma~\ref{lem:Stirling}, we have that 
\begin{equation}\label{eq:ubar}\begin{split}  \E[ \ov u_<(t_{i-1},x_{i-1};t_i,x_i) ^p]^{\frac 1p} &\leq  C\,\Ga(\theta_p)^{\frac 1p}\Biggl(\sum_{k=0}^\infty\frac{(C\Ga(\frac{\theta_p}3))^{\frac kp}}{\Ga(\frac{\theta_p}3 k+\theta_p)^{\frac 1p}}\Biggr)g(\Delta t_i,\Delta x_i)\\
&\leq C\frac{\Ga(\theta_p)^{\frac 1p}}{\theta_p}e^{C\Ga(\frac{\theta_p}3)^{3/\theta_p}}g(\Delta t_i,\Delta x_i),\\
\E[ \ov Y_<(t,x) ^p]^{\frac 1p} &\leq  C\,\Ga(\theta_p)^{\frac 1p}\Biggl(\sum_{k=0}^\infty\frac{(C\Ga(\frac{\theta_p}3))^{\frac kp}}{\Ga(\frac{\theta_p}3 k+\theta_p)^{\frac 1p}}\Biggr) \leq C\frac{\Ga(\theta_p)^{\frac 1p}}{\theta_p}e^{C\Ga(\frac{\theta_p}3)^{3/\theta_p}}.\end{split}\raisetag{-3.5\baselineskip}\end{equation}
As $p<\al<1+\frac2d$, $\theta_p$ is bounded away from $0$ and
by \cite[Cor.\ 6.5]{Berger21b} we simply obtain
$$  \E[ \ov u_<(t_{i-1},x_{i-1};t_i,x_i) ^p] \leq Cg(\Delta t_i,\Delta x_i)^p\qquad \text{and}\qquad\E[\ov Y_<(t,x)^p]\leq C.$$
Inserting this into \eqref{eq:Ybarp}, we deduce the bound
\beq\label{eq:Yp2}
\E[  \ov Y(t,x) ^p]^{\frac1p}\leq  \sum_{N=0}^\infty (CM_p(\la)^{\frac 1p})^N  \Biggl(\int_{((0,t)\times\R^d)^N}  \prod_{i=2}^{N+1} g(\Delta t_i,\Delta x_i)^p\prod_{j=1}^N \dd t_j\,\dd x_j\Biggr)^{\frac 1p}
\eeq
Comparing with the estimate in \eqref{eq:Yp},  we can conclude by a   similar   argument.

Finally, if $d=1$ and $\al\in(1,2]$, one only needs to replace the bound in \eqref{eq:ubar} by
\beq\label{eq:ubar2}\begin{split} \E[ \ov u_<(t_{i-1},x_{i-1};t_i,x_i) ^p]^{\frac 1p}&\leq C\,\Ga(\theta_p)^{\frac 1p}\Biggl(\sum_{k=0}^\infty\frac{(C\Ga(\frac{\theta_p}3))^{\frac kp}}{\Ga( \theta_p k+\theta_p)^{\frac 1p}}\Biggr)g(\Delta t_i,\Delta x_i) \leq Cg(\Delta t_i,\Delta x_i),\\
	\E[ \ov Y_<(t,x) ^p]^{\frac 1p}&\leq C\,\Ga(\theta_p)^{\frac 1p}\Biggl(\sum_{k=0}^\infty\frac{(C\Ga(\frac{\theta_p}3))^{\frac kp}}{\Ga( \theta_p k+\theta_p)^{\frac 1p}}\Biggr)  \leq C,
\end{split}\raisetag{-2.5\baselineskip}\eeq
which also follow from the proof of \cite[Cor.\ 6.5]{Berger21b}.
If $d=1$, $\al\in (2,3)$ and $p\in(2,\al)$, the bounds in \cite[Prop.~6.1]{Berger21} do 
not yield optimal tail estimates, which is why we need to use a different approach. Since 
$\E[\ov Y(t,x)^p]=\E[\ov Y(t,0)^p]$ for all $x\in\R$ by stationarity, we can    use  
\cite[Thm.~1]{MR} (with $\alpha = 2$) and Minkowski's integral inequality
to show that
\begin{align*}
\E[\ov Y(t,0)^p]^{\frac1p}&\leq \mathtoolsset{multlined-width=0.8\displaywidth} 
\begin{multlined}[t] C\Biggl( 1 + \Biggl(\mu_2(\la)\int_0^t\int_\R 
g(t-s,x-y)^2\E[\ov Y(s,y)^p]^{\frac2p}\,\dd s\,\dd y\Biggr)^{\frac12}\\
+\Biggl(\mu_p(\la)\int_0^t\int_\R g(t-s,x-y)^p\E[\ov Y(s,y)^p]\,\dd s\,\dd 
y\Biggr)^{\frac 1p}\Biggr)\end{multlined}\\
&\leq  \mathtoolsset{multlined-width=0.8\displaywidth} \begin{multlined}[t] C\Biggl( 1 + 
\Biggl(\mu_2(\la)\int_0^t (t-s)^{-\frac12}\E[\ov Y(s,0)^p]^{\frac2p}\,\dd s 
\Biggr)^{\frac12}\\ +\Biggl(\mu_p(\la)\int_0^t  (t-s)^{-\frac{p-1}2} \E[\ov 
Y(s,0)^p]\,\dd s \Biggr)^{\frac 1p}\Biggr). \end{multlined}
\end{align*}
We can absorb $\mu_2(\la)$ into the constant $C$. Moreover, by H\"older's inequality (with respect to the measure $(t-s)^{-1/2}\,\dd s$), 
$$ \Biggl( \int_0^t (t-s)^{-\frac12} \E[\ov Y(s,0)^p]^{\frac 2p} \,\dd s\Biggr)^{\frac12} 
\leq\Biggl(\int_0^t (t-s)^{-\frac12} \E[\ov Y(s,0)^p]\,\dd 
s\Biggr)^{\frac1p}\Biggl(\int_0^t s^{-\frac 12}\,\dd s\Biggr)^{\frac12(1-\frac2p)}.$$
Since $s^{-\frac12}\leq C_t s^{-(p-1)/2}$ for $s\in(0,t]$, it follows that
\begin{align*}
\E[\ov Y(t,0)^p]\leq C\Biggl(1+\mu_p(\la)\int_0^t (t-s)^{-\frac 
{p-1}2}\E[\ov Y(s,0)^p]\,\dd s\Biggr)
\end{align*}
for some constant $C$ that may depend on $t$. Iterating this inequality and arguing as in \eqref{eq:Yp} and \eqref{eq:mom-ub}, we arrive at
\beq\label{eq:mom-p}\begin{split}\E[\ov Y(t,0)^p]&\leq C \sum_{N=0}^\infty (C\mu_p(\la))^N 
\int_{(0,t)^N} \bone_{\{t_1<\dots<t_N\}} \prod_{i=2}^{N+1} (\Delta t_i)^{-\frac 
{p-1}2}\prod_{j=1}^N  \,\dd t_j \\ &\leq C e^{(C\mu_p(\la)\Ga(\theta_p))^{1/\theta_p}t}.\end{split} \eeq
The proof can now be completed as in   the paragraph following \eqref{eq:mom-ub}.

\bigskip
\noindent\bff{Step 2: Lower bound} 
\smallskip

\noindent  
At this part it is convenient to treat the time on $(-\infty, \infty)$.
Let $(\tau_0,\eta_0)=(t,x)$ and 
$$\tau_i=\sup\{ u \in(0,\tau_{i-1})\ : \ \La_\geq(\{(s,y)\in[u,\tau_{i-1})\times\R^d :  
\lvert \eta_{i-1}-y\rvert\leq \sqrt{\tau_{i-1}-s}\})=1\}$$
for $i\in\N$, 
where 
$\eta_i$ is the spatial coordinate  of the atom associated to $\tau_i$. 
We denote the associated jump size by $\zeta_i$. Note that the numbering is reversed here, since we trace atoms backwards in time, starting at $(t,x)$. Clearly, 
if we write $\Delta \tau_i=\tau_{i-1}-\tau_{i}$ and $\Delta \eta_i=\eta_{i-1}-\eta_{i}$, the events
$$ A_N=\bigcap_{i=1}^N \{ \Delta \tau_i\leq \tfrac tN \}  \cap  \{   \Delta\tau_{N+1}>t  \},\qquad N\in\N,$$
are pairwise disjoint. Moreover, since $u_<$ is nonnegative in \eqref{eq:separate} (see \cite[Thm.\ 2.1]{Berger21b}), we have
\begin{align*}  Y(t,x)&\geq\int_{((0,t)\times\R^d)^N}  Y_<(t_1,x_1)\prod_{i=2}^{N+1} u_<(t_{i-1},x_{i-1};t_i,x_i)\prod_{j=1}^N\La_\geq(\dd t_j,\dd x_j)\\ 
	&\geq Y_<(\tau_N,\eta_N)\prod_{i=1}^{N} 
u_<(\tau_{i},\eta_{i};\tau_{i-1},\eta_{i-1})  \zeta_i\end{align*}
on the event $A_N$. Therefore,
\beq\label{eq:PY}\begin{split}
\P(Y(t,x)>R)&\geq \sum_{N=1}^\infty \P\Biggl( A_N \cap \Biggl\{ Y_<(\tau_N,\eta_N)\prod_{i=1}^{N} u_<(\tau_{i},\eta_{i};\tau_{i-1},\eta_{i-1})  \zeta_i >R \Biggr\}\Biggr)\\&=\sum_{N=1}^\infty \P(A_N) \P\Biggl( Y_<(\tau_N,\eta_N)\prod_{i=1}^{N} u_<(\tau_{i},\eta_{i};\tau_{i-1},\eta_{i-1})  \zeta_i>R \mathrel{\bigg|} A_N \Biggr).
\end{split}\raisetag{-3\baselineskip}\eeq
As $\La_\geq$ is a Poisson random measure,  $(\Delta\tau_i)_{i\in\N}$ is a sequence of independent and identically distributed variables with distribution function $1-e^{-C x^{1+d/2}}$, where $C=\pi^{d/2}/\Ga(\frac d2 +2)$. 
Thus,
\beq\label{eq:PAN} \P(A_N)=  e^{-C t^{1+d/2}}  (1-e^{-C (\frac tN)^{1+d/2}})^N\geq \frac{C^N}{N^{(1+\frac d2)N}}. \eeq

Next, we estimate the conditional probability in \eqref{eq:PY}. For simplicity, we write $\P_N=\P(\cdot\mid A_N)$, $\P^{\tau,\eta}$ for the conditional probability given the sequences $(\tau_i)_{i\in\N}$ and $(\eta_i)_{i\in\N}$ and $\P^{\tau,\eta}_<$ if we further condition on $\La_<$. Because the variables $\zeta_i$ are independent of $\La_<$, $\tau_i$ and $\eta_i$, Lemma~\ref{lem:JM} implies that
  \begin{align*}&\P_N\Biggl(Y_<(\tau_N,\eta_N)\prod_{i=1}^{N} u_<(\tau_{i},\eta_{i};\tau_{i-1},\eta_{i-1})  \zeta_i>R\Biggr)\\
&\qquad=\E_N\Biggl[ \P^{\tau,\eta}_<\Biggl(\prod_{i=1}^N \zeta_i>\frac{R}{Y_<(\tau_N,\eta_N)\prod_{i=1}^{N} u_<(\tau_{i},\eta_{i};\tau_{i-1},\eta_{i-1}) }\Biggr)\Biggr]\\
&\qquad \geq \mathtoolsset{multlined-width=0.8\displaywidth} \begin{multlined}[t]\frac{C^N}{(N-1)!} R^{-\al}\E_N\Biggl[Y_<^\al(\tau_N,\eta_N)\prod_{i=1}^{N} u^\al_<(\tau_{i},\eta_{i};\tau_{i-1},\eta_{i-1})\\
	\times\log^{N-1}\frac{R}{Y_<(\tau_N,\eta_N)\prod_{i=1}^{N} u_<(\tau_{i},\eta_{i};\tau_{i-1},\eta_{i-1}) }\Biggr].\end{multlined} \end{align*} 
Further restricting to the set $\{Y_<(\tau_N,\eta_N)\prod_{i=1}^{N} u_<(\tau_{i},\eta_{i};\tau_{i-1},\eta_{i-1}) \leq \sqrt{R}\}$, we obtain that
\beq\label{eq:plower}\begin{split} &\P_N\Biggl(Y_<(\tau_N,\eta_N)\prod_{i=1}^{N} u_<(\tau_{i},\eta_{i};\tau_{i-1},\eta_{i-1}) \zeta_i>R\Biggr)\\
&\qquad\geq \mathtoolsset{multlined-width=0.8\displaywidth} \begin{multlined}[t] \frac{(C\log R)^{N-1}}{(N-1)!} R^{-\al}\E_N\Biggl[Y_<^\al(\tau_N,\eta_N)\prod_{i=1}^{N} u^\al_<(\tau_{i},\eta_{i};\tau_{i-1},\eta_{i-1})\\
	\times\bone_{\{Y_<(\tau_N,\eta_N)\prod_{i=1}^{N} u_<(\tau_{i},\eta_{i};\tau_{i-1},\eta_{i-1})\leq \sqrt{R}\}}\Biggr].\end{multlined}\end{split}\eeq

The last line has the form $\E[X^\al\bone_{\{X\leq \sqrt{R}\}}]$, which can be bounded from below by
\beq\label{eq:exp}\begin{split}\E[X^\al\bone_{\{X\leq \sqrt{R}\}}]&=\E[X^\al]-\E[X^\al\bone_{\{X> \sqrt{R}\}}]\geq \E[X^\al]-\E[X^{\al p}]^{\frac 1p} \P(X>\sqrt{R})^{1-\frac1p}\\
&\geq \E[X^\al]-\E[X^{\al p}]R^{-\frac12\al (p-1)}\end{split}\eeq
thanks to H\"older's inequality and Markov's inequality.
Moreover, 
\beq\label{eq:help}\mathtoolsset{multlined-width=0.9\displaywidth} 
\begin{multlined}
\E_N\Biggl[Y_<^\al(\tau_N,\eta_N)\prod_{i=1}^{N} u^\al_<(\tau_{i},\eta_{i};\tau_{i-1},\eta_{i-1})\Biggr] \\= \E_N\Biggl[\E^{\tau,\eta}[Y_<^\al(\tau_N,\eta_N)]\prod_{i=1}^{N}\E^{\tau,\eta}\Biggl[ u^\al_<(\tau_{i},\eta_{i};\tau_{i-1},\eta_{i-1})\Biggr]\Biggr],\end{multlined}
\eeq
where we used the  independence of $Y_<(\tau_N,\eta_N)$ and the variables 
$u_<(\tau_{i},\eta_{i};\tau_{i-1},\eta_{i-1})$ as $i$ varies under $\P^{\tau,\eta}$.
Indeed, the sequence $\tau_i$ is determined by $\Lambda_{\geq}$, while $Y_<$ and $u_<$
are defined via $\Lambda_<$.

If $\al\in(0,1)$, we use Lemma~\ref{lem:PZ} (with some fixed $p\in(1,1+\frac 2d)$) and obtain
\begin{align*} \E^{\tau,\eta}\Bigl[ u^\al_<(\tau_{i},\eta_{i};\tau_{i-1},\eta_{i-1})\Bigr]&\geq C\frac{g(\Delta \tau_i,\Delta \eta_i)^{\al+\frac p{p-1}}}{\E^{\tau,\eta}[u^p_<(\tau_{i},\eta_{i};\tau_{i-1},\eta_{i-1})]^{\frac 1{p-1}}}\geq C g(\Delta \tau_i,\Delta \eta_i)^\al,\\
 \E^{\tau,\eta}[Y^\al_<(\tau_N,\eta_N)]&\geq C\frac{\E^{\tau,\eta}[Y_<(\tau_N,\eta_N)]^{\al+\frac{p}{p-1}}}{\E^{\tau,\eta}[Y^p_<(\tau_N,\eta_N)]^{\frac1{p-1}}}\geq C, \end{align*}
where the last step in both lines follows from \cite[Cor.\ 6.5]{Berger21b}. Thus, there is $C_1\in(0,\infty)$ such that
\beq\label{eq:lb}
\E_N\Biggl[Y_<^\al(\tau_N,\eta_N)\prod_{i=1}^{N} u^\al_<(\tau_{i},\eta_{i};\tau_{i-1},\eta_{i-1})\Biggr]\geq
C_1^N\E_N\Biggl[\prod_{i=1}^{N}g(\Delta \tau_i,\Delta \eta_i)^\al\Biggr].
\eeq
If $\al\geq1$, we can use Jensen's inequality in \eqref{eq:help} to take $\al$ outside of $\E^{\tau,\eta}$ and obtain \eqref{eq:lb} as well (with $C_1=1$),
since $\E^{\tau,\eta}[Y_<(\tau_N,\eta_N)]=1$ and $\E^{\tau,\eta}[u_<(\tau_{i},\eta_{i};\tau_{i-1},\eta_{i-1})]=g(\Delta \tau_i,\Delta \eta_i)$. 
At the same time, for any $\al\in(0,1+\frac2d)$, upon using Jensen's inequality if $\al\in(0,1]$ and \cite[Cor.~6.5]{Berger21b} if $\al\in(1,1+\frac 2d)$, we have the upper bound
\beq\label{eq:ub}\E_N\Biggl[Y_<^\al(\tau_N,\eta_N)\prod_{i=1}^{N} u^\al_<(\tau_{i},\eta_{i};\tau_{i-1},\eta_{i-1})\Biggr]\leq C^N_2
\E_N\Biggl[\prod_{i=1}^{N}g(\Delta \tau_i,\Delta \eta_i)^\al\Biggr].\eeq

Next, we evaluate $\E_N [\prod_{i=1}^{N}g(\Delta \tau_i,\Delta \eta_i)^\al]$. To this end, note that conditionally on $A_N$, the $\Delta\tau_i$'s are independent with density \beq\label{eq:density}f_N(x)=\frac{C(1+\tfrac d2)x^{\frac d2}e^{-Cx^{1+d/2}}}{1-e^{-C(\frac tN)^{1+d/2}}},\qquad x\in(0,\tfrac tN),\eeq
for all $i=1,\dots,N$,
while the $\Delta \eta_i$'s are independent and, conditioned on $\Delta \tau_i$'s are 
uniformly distributed on a centered ball with radius $\sqrt{\Delta \tau_i}$. Therefore,
\begin{align*}
	\E_N\Biggl[\prod_{i=1}^{N}g(\Delta \tau_i,\Delta \eta_i)^\al\Biggr] &= \E_N[g(\Delta \tau_1,\Delta \eta_1)^\al]^N\\
	&= \Biggl(\int_0^{t/N} \frac{f_N(s)}{\pi^{\frac d2} / \Gamma(\frac d2 +1)s^{\frac d2}}\int_{\R^d} g(s,y)^\al \bone_{\{\lvert y\rvert \leq \sqrt{s}\}}\,\dd y\,\dd s\Biggr)^N\\
	&\leq C^NN^{(1+\frac d2)N}\Biggl( \int_0^{t/N}\int_{\R^d} g(s,y)^\al\bone_{\{\lvert y\rvert \leq \sqrt{s}\}}\,\dd y\,\dd s \Biggr)^N\\
	&= C^NN^{(1+\frac d2)N}\Biggl(\int_0^{t/N} s^{-\frac d2(\al-1)}\,\dd s\Biggr)^N = \frac{C^NN^{(1+\frac d2)N}}{N^{\theta_\al N}}.
\end{align*}
In this calculation, we can replace $\leq$ by $\geq$ in the third line upon changing the value of $C$. As a consequence, if we combine this result with \eqref{eq:plower}, \eqref{eq:exp}, \eqref{eq:lb} and \eqref{eq:ub} (with  $\al p$ in the role of $\al$ and $p>1$ such that $\al p< 1+\frac 2d$), we obtain that 
\begin{align*}
	&\P_N\Biggl(Y_<(\tau_N,\eta_N)\prod_{i=1}^{N} u_<(\tau_{i},\eta_{i};\tau_{i-1},\eta_{i-1}) \zeta_i>R\Biggr)\\
	&\qquad\geq \frac{(C\log R)^{N-1} N^{(1+\frac d 2)N}}{N!}R^{-\al}\Biggl( 
\frac{C_1^N}{N^{\theta_\al N}}-\frac{C_2^N}{N^{\theta_{\al p} N}}R^{-\frac 12\al(p-1)} 
\Biggr)\\
	&\qquad\geq\frac{(C\log R)^{N-1} N^{(1+\frac d 
2)N}}{N^{(1+\theta_\al)N}}R^{-\al}\Biggl(1-\frac{(C_2/C_1)^NN^{\frac d2 \al 
(p-1)}}{R^{\frac 12\al(p-1)}} \Biggr)\\
	&\qquad\geq \frac{(C\log R)^{N-1} 
	N^{(1+\frac d 2)N}}{2N^{(1+\theta_\al)N}}R^{-\al},
\end{align*}
where the last step holds if $N\leq C_0\log R$ for some   small but fixed $C_0>0$. Together with \eqref{eq:PY} and \eqref{eq:PAN}, we have shown that 
\beq\label{eq:C0}
\P(Y(t,x)>R)\geq R^{-\al} (\log R)^{-1} \sum_{N=1}^{  C_0 \log R } \frac{(C\log 
R)^N}{N^{(1+\theta_\al)N}}.
\eeq

In order to bound this sum, we use integral approximations. Because the function $x\mapsto 
\vp(x)=  (C\log R)^x/x^{(1+\theta_\al)x}$ has a unique maximum at $x_0=(C\log 
R)^{1/(1+\theta_\al)}e^{-1}$ and 
$$ \frac{\vp(\lceil x_0\rceil)}{\vp(x_0)}\geq\frac{\vp(x_0+1)}{\vp(x_0)} = \frac{C\log R(1+(C\log R)^{1/(1+\theta_\al)}e^{-1})^{-1-\theta_\al}}{(1+1/[(C\log R)^{1/(1+\theta_\al)}e^{-1}])^{(1+\theta_\al)(C\log R)^{1/(1+\theta_\al)}e^{-1}}}\to 1$$
as $R\to\infty$, we have, for sufficiently large $R$,
\begin{align*}
	&\sum_{N=1}^{  C_0\log R } \frac{(C\log R)^N}{N^{(1+\theta_\al)N}} \geq \frac12 \int_0^{\lfloor C_0\log R\rfloor+1} \frac{(C\log R)^x}{x^{(1+\theta_\al)x}} \,\dd x\\ &\qquad  = \frac1{2(1+\theta_\al)} \int_0^{(1+\theta_\al)(\lfloor C_0 \log R\rfloor+1)} \frac{(C\log R)^{ y/(1+\theta_\al)}}{( y/(1+\theta_\al))^y}\,\dd y \\
	&\qquad\geq  \frac1{4(1+\theta_\al)} \sum_{N=0}^{  (1+\theta_\al)\lfloor C_0\log R\rfloor } \frac{[(C \log R)^{1/(1+\theta_\al)}(1+\theta_\al)]^N}{N^N} \\
	&\qquad\geq \frac1{4(1+\theta_\al)} \sum_{N=0}^{  (1+\theta_\al)\lfloor C_0\log R\rfloor } \frac{[(C \log R)^{1/(1+\theta_\al)}(1+\theta_\al)e^{-1}]^N}{N!}.
\end{align*}
By Taylor's theorem, this is further bounded from below by
\begin{align*}
	& \frac1{4(1+\theta_\al)}e^{(C \log R)^{1/(1+\theta_\al)}(1+\theta_\al)e^{-1}}\Biggl(1-\frac{[(C \log R)^{1/(1+\theta_\al)}(1+\theta_\al)e^{-1}]^{\lfloor (1+\theta_\al)\lfloor C_0\log R\rfloor\rfloor}}{\lfloor (1+\theta_\al)\lfloor C_0\log R\rfloor\rfloor!}\Biggr)\\
	&\qquad\geq \frac1{4(1+\theta_\al)}e^{(C \log R)^{1/(1+\theta_\al)}(1+\theta_\al)e^{-1}}\Biggl(1-\biggl(\frac{C^{C_0}C_0^{-1-\theta_\al}}{(\log R)^{\theta_\al}}\biggr)^{C_0\log R}\Biggr)\\
	&\qquad\geq\frac1{8(1+\theta_\al)}e^{(C \log R)^{1/(1+\theta_\al)}(1+\theta_\al)e^{-1}}.
\end{align*}
This completes the proof of the lower bound in  \eqref{eq:sol-ht}.
\epr

If the noise has lighter tails, a different slowly varying function appears in the tail.

\bthm\label{thm:sol-lt} Assume Condition~\ref{cond:ht} or \ref{cond:lt} with $\al=1+\frac 2d$. For every $t>0$, there is $C>0$ such that for all $x\in\R^d$ and $R>1$,
\beq\label{eq:sol-lt} 
 C_1R^{-1-\frac 2d}e^{C_1\frac{(\log R)(\log\log\log R)}{\log\log R}}
 	\leq \P(Y(t,x)>R)
	\leq C_2R^{-1-\frac 2d}e^{C_2\frac{(\log R)(\log\log\log R)}{\log\log R}}. 
	\eeq
\ethm
\bpr \bff{Step 1: Upper bound} 
\smallskip

\noindent  
First consider $d\geq2$, in which case $1+\frac2d\in(1,2]$. As in the upper bound proof of Theorem~\ref{thm:sol-ht}, it suffices to show the tail bound for $\ov Y(t,x)$.  By \cite[Prop.~6.3]{Berger21b} and our assumptions on $\la$, there are $\eta>0$ and $C>0$ such that for any $1<p<1+\frac 2d$, we have
\begin{align*}
\E[\ov Y(t,x)^p]^{\frac1p}\leq \sum_{N=0}^\infty (CL_p)^{\frac Np} 
t^{\frac{1-\theta_p}p}
\left( 
\int_{(0,t)^N}\bone_{\{t_1<\dots<t_N\}} (\Delta 
t_{N+1})^{\theta_p-1}\prod_{i=1}^N G_p(\Delta t_i)\,\dd t_i \right)^{\frac1p},
\end{align*}
where $L_p=\int_{(0,\eta)} z^{1+2/d}(3\lvert \log z\rvert+1)\,\la(\dd 
z)+\int_{[\eta,\infty)} z^p\,\la(\dd z)$ and $G_p(s)=s^{\theta_p/3-1}$ if $s\leq 1$ and 
$G_p(s)=s^{\theta_p-1}$ if $s\geq1$. On $[0,t]$, we have 
$G_p(s) \leq C s^{\theta_p/3-1}$ for 
some $C$ that only depends on $t$. Furthermore, if 
$m^{\log}_{1+2/d}(\la)+M_{1+2/d}(\la)<\infty$, then $L_p\leq C$ for some constant $C$ that 
is independent of $p$; if $\la([R,\infty))\sim C R^{-1-2/d}$ as $R\to\infty$, then 
$L_p\sim C\theta_p^{-1}$ as $p \uparrow 1 + \frac2d$, for some (other) constant $C$ that is 
also independent of $p$. Therefore, in both cases, 
\begin{align*}
\E[\ov Y(t,x)^p]^{\frac1p}
&\leq \sum_{N=0}^\infty (C\theta_p^{-1})^{\frac Np} 
\left( 
t^{1-\theta_p}\int_{(0,t)^N}\bone_{\{t_1<\dots<t_N\}} (\Delta 
t_{N+1})^{\theta_p-1}\prod_{i=1}^N (\Delta t_i)^{\frac{\theta_p}{3}-1}\,\dd t_i
\right)^{\frac1p}\\
&=\sum_{N=0}^\infty \frac{(C\theta_p^{-1}t^{\frac{\theta_p}3} 
\Ga(\frac{\theta_p}3))^{\frac N p}}{\Ga(\theta_p+\frac {\theta_p}3 N)^{\frac 1p}} 
\Ga(\theta_p)^{\frac1p} \leq 
C\frac{\Ga(\theta_p)^{\frac1p}}{\theta_p} 
\exp \{ t (C \theta_p^{-1} 
\Ga(\tfrac{\theta_p}3))^{3/\theta_p} \}
\end{align*}
by \cite[Lemma~A.3]{Berger21} and Lemma~\ref{lem:Stirling}. Thus, by Markov's inequality and possibly after enlarging $C$ in the second step,
\beq\label{eq:tail2} 
\P(\ov Y(t,x)>R)\leq CR^{-p} 
\frac{\Ga(\theta_p)}{\theta_p^p}
\exp \{ C (\theta_p^{-1} \Ga(\tfrac{\theta_p}3))^{3/\theta_p} \}\leq 
CR^{-p} e^{(\frac C{\theta_p})^{C/\theta_p}} \eeq
as $p$ is close enough to $1+\frac 2d$ (because   $\theta_p\downarrow 0$ 
as $p \uparrow 1 +2/d$ thus $\Ga(\theta_p)\sim \theta_p^{-1}$).

Let $W:(0,\infty)\to(0,\infty)$ be the (principal branch of the) Lambert $W$ function, that is, $W(x)$ is the unique solution on $(0,\infty)$ to the equation $We^W=x$. 
We choose $p=p(R)<1+\frac2d$ such that 
$$\frac C{\theta_p} =  \exp(W(\log\log R - \log \log\log R+\log \log\log\log R)).$$
Let us denote the expression on the right-hand side by $z=z(R)$. Then $p(R)=1+\frac 2d -\frac{2C}{dz(R)}$ and \eqref{eq:tail2} becomes
$$ \P(\ov Y(t,x)>R)\leq CR^{-1-\frac 2d}R^{\frac {2C}{dz(R)}} e^{z(R)^{z(R)}}= CR^{-1-\frac 2d}R^{\frac {2C}{dz(R)}}e^{e^{z(R)\log z(R)}}.$$
Note that $z=e^{W(x)}$ satisfies $z\log z= x$ by the definition of $W$. Therefore,
\beq\label{eq:tail3} 
\P(\ov Y(t,x)>R)\leq CR^{-1-\frac 2d}R^{\frac {2C}{dz(R)}}
e^{\frac{(\log R)(\log \log \log R)}{ \log \log R}}. \eeq
By \cite[Eq.~(4.13.10)]{NIST}, there exists $x_0\in(0,\infty)$ such that
\beq\label{eq:W} \log x - \log\log x\leq W(x)\leq \log x - \frac12\log\log x\eeq
for all  $x\geq x_0$.
Consequently, for sufficiently large $R$,
$$ z(R)\geq e^{W(\frac 12\log \log R)} \geq e^{\log(\frac12 \log\log R)-\log\log(\frac12\log\log R)} = \frac{\frac12\log\log R}{\log(\frac12\log\log R)},$$
which implies
$$ R^{\frac {2C}{dz(R)}}=e^{\frac{2C\log R}{dz(R)}} \leq e^{\frac{4C(\log R)\log(\frac12\log\log R)}{d \log \log R}}.$$
Combining this with \eqref{eq:tail3}, we obtain \eqref{eq:sol-lt} if $d\geq2$. The proof essentially remains the same if $d=1$: by \eqref{eq:mom-p}, because $\mu_p(\la)\leq m_2(\la)+M_p(\la)\leq C \theta_p^{-1}$ uniformly in $p\in[2,3)$,
$$ \E[Y(t,x)^p]\leq 
C
\exp \{ C (\theta_p^{-1}\Ga(\theta_p))^{1/\theta_p} \}
\leq Ce^{(\frac 
C{\theta_p})^{C/\theta_p}}. $$ 
With this bound, we can go back to \eqref{eq:tail2} and complete the proof as before.

\bigskip
\noindent\bff{Step 2: Lower bound} 
\smallskip

\noindent   Without loss of generality, we may assume that $M_0(\la)=\la([1,\infty))>0$. In this case, with the same notation as in the lower bound proof of Theorem~\ref{thm:sol-ht}, we have $\zeta_i\geq1$ and
\begin{align*}
	&\P_N\Biggl(Y_<(\tau_N,\eta_N)\prod_{i=1}^{N} u_<(\tau_{i},\eta_{i};\tau_{i-1},\eta_{i-1}) \zeta_i>R\Biggr)\\
	&\qquad\geq \mathtoolsset{multlined-width=0.8\displaywidth} \begin{multlined}[t]  
\P_N\Biggl(\Biggl\{ 2^{-N-1} \prod_{i=1}^{N} g(\Delta \tau_i,\Delta \eta_i) >R\Biggr\} 
\cap \{Y_<(\tau_N,\eta_N)>\tfrac12\}\\
	 \cap \bigcap_{i=1}^N \{u_<(\tau_{i},\eta_{i};\tau_{i-1},\eta_{i-1})>\tfrac12g(\Delta \tau_i,\Delta \eta_i)\} \Biggr)
	\end{multlined}\\
	&\qquad\geq \mathtoolsset{multlined-width=0.8\displaywidth} \begin{multlined}[t]  
\E_N\Biggl[\bone\Biggl\{2^{-N-1}\prod_{i=1}^{N} g(\Delta \tau_i,\Delta \eta_i) >R  
\Biggr\} \P^{\tau,\eta}(Y_<(\tau_N,\eta_N)>\tfrac12) \\
		\times\prod_{i=1}^N \P^{\tau,\eta}( u_<(\tau_{i},\eta_{i};\tau_{i-1},\eta_{i-1})>\tfrac12g(\Delta \tau_i,\Delta \eta_i )\Biggr],\end{multlined}
\end{align*}
where the second step follows by using the independence under $\P^{\tau,\eta}$ of the variables $Y_<(\tau_N,\eta_N)$ and $(u_<(\tau_{i},\eta_{i};\tau_{i-1},\eta_{i-1}))_{i=1,\dots,N}$. Observe that $\E^{\tau,\eta}[Y_<(\tau_N,\eta_N)]=1$ and $\E^{\tau,\eta}[u_<(\tau_{i},\eta_{i};\tau_{i-1},\eta_{i-1})]=g(\Delta \tau_i,\Delta \eta_i )]$. Thus, by Lemma~\ref{lem:PZ} and \cite[Cor.~6.5]{Berger21b}, there is a deterministic $C>0$ such that 
\beq\label{eq:prob} \P^{\tau,\eta}(Y_<(\tau_N,\eta_N)>\tfrac12)>C,\qquad \P^{\tau,\eta}( u_<(\tau_{i},\eta_{i};\tau_{i-1},\eta_{i-1})>\tfrac12g(\Delta \tau_i,\Delta \eta_i )> C \eeq
for all $i=1,\dots,N$. Moreover, $g(\Delta\tau_i,\Delta\eta_i)\geq (2\pi \Delta\tau_i)^{-d/2}e^{-1/2} = C(\Delta \tau_i)^{-d/2}$ by the definition of $\tau_i$. Hence,
\begin{multline*} \P_N\Biggl(Y_<(\tau_N,\eta_N)\prod_{i=1}^{N} 
u_<(\tau_{i},\eta_{i};\tau_{i-1},\eta_{i-1}) \zeta_i>R\Biggr)\\ 
\geq 
C^N\P_N\Biggl(2^{-N-1}\prod_{i=1}^{N} g(\Delta \tau_i,\Delta \eta_i) >R  \Biggr)\geq C^N\P_N\Biggl(\prod_{i=1}^{N} (\Delta \tau_i)^{-\frac d2}>C^{-N} R  \Biggr).
\end{multline*}
To evaluate this probability, recall the density of $\tau_i$ from \eqref{eq:density}. We have 
\begin{align*}
&\P_N\Biggl(\prod_{i=1}^{N} (\Delta \tau_i)^{-\frac d2}>C^{-N} R  
\Biggr)\\
&\qquad=C^N\int_{(0,\frac tN)^N} \frac{e^{-C\sum_{i=1}^N s_i^{1+d/2}}}{(1-e^{-C(\frac 
tN)^{1+d/2}})^N} \bone\Biggl\{ \prod_{i=1}^N s_i^{-\frac d2} > 
C^{-N}R\Biggr\}\prod_{i=1}^N s_i^{\frac d2}\,\dd s_i\\
&\qquad\geq C^NN^{(1+\frac d2)N}\int_{(0,\frac tN)^N}  
\bone\Biggl\{ \prod_{i=1}^N s_i < C^{N}R^{-\frac 2d}\Biggr\}\prod_{i=1}^N 
s_i^{\frac d2}\,\dd s_i\\
	&\qquad= C^NN^{(1+\frac d2)N}R^{-1-\frac 2d} \int_{(0,\frac {tR^{2/(dN)}}{CN})^N}  
\bone\Biggl\{ \prod_{i=1}^N u_i < 1 \Biggr\}\prod_{i=1}^N u_i^{\frac 
d2}\,\dd u_i.
\end{align*}
Provided that $\frac {tR^{2/(dN)}}{CN}>1$ (i.e., $N^N< (\frac tC)^N R^{2/d}$), we can use Lemma~\ref{lemma:iter-int} (keeping only the term corresponding to $i=N-1$) to obtain
\begin{align*}
	\P_N\Biggl(\prod_{i=1}^{N} (\Delta \tau_i)^{-\frac d2}>C^{-N} R  \Biggr) \geq C^NN^{(1+\frac d2)N}R^{-1-\frac 2d} \biggl(\log \frac {tR^{\frac2{dN}}}{CN}\biggr)^{N-1}.
\end{align*}
We will further restrict ourselves to $N$ such that 
\beq\label{eq:cond2} N^N \leq \biggl(\frac tC\biggr)^N R^{1/d},\eeq 
in which case  
\begin{align*}
	\P_N\Biggl(\prod_{i=1}^{N} (\Delta \tau_i)^{-\frac d2}>C^{-N} R  \Biggr) \geq C^NN^{\frac d2 N}R^{-1-\frac 2d}  \log^{N-1} R.
\end{align*}
In fact, we will consider $N$ such that equality is attained in \eqref{eq:cond2}, that is, we choose
\beq\label{eq:NR} N=Ke^{W(\frac 1{dK}\log R)}, \eeq
where $K$ is actually $\frac tC$ from above. Recalling \eqref{eq:PAN}, we obtain
\begin{align*}
	\P(Y(t,x)>R)&\geq   \P(A_N)\P_N\Biggl(Y_<(\tau_N,\eta_N)\prod_{i=1}^{N} u_<(\tau_{i},\eta_{i};\tau_{i-1},\eta_{i-1}) \zeta_i>R\Biggr)\\
	&\geq C^NN^{-N}R^{-1-\frac 2d}\log^{N-1}(R)\\
	&=  \frac{R^{-1-\frac 2d}(C\log R)^{K\exp(W(\frac 1{dK}\log R))}}{\log R(K\exp(W(\frac 1{dK}\log R)))^{K\exp(W(\frac 1{dK}\log R))}}.
\end{align*}
By \eqref{eq:W}, for sufficiently large $R$
\begin{align*}
\P(Y(t,x)>R)&\geq  
\frac{R^{-1-\frac 2d}(C\log R)^{K\exp(W(\frac 1{dK}\log R))}}
{(\log R)(\frac1d\log R)^{K\exp(W(\frac 1{dK}\log R))}}(\log (\tfrac1{dK} \log 
R))^{\frac12K\exp(W(\frac 1{dK}\log R))}\\
	&\geq \frac{R^{-1-\frac 2d}}{\log R} C^{K\exp(W(\frac 1{dK}\log R))} e^{\frac12K\exp(W(\frac 1{dK}\log R))\log\log\log R}\\
	&\geq R^{-1-\frac 2d}e^{\frac14K\exp(W(\frac 1{dK}\log R))\log\log\log R} \\
	&\geq  R^{-1-\frac 2d } e^{C\frac{(\log R)(\log\log\log R)}{\log \log R}},
\end{align*}
where the last step holds for some sufficiently small $C>0$.
\epr

In the previous proof, we used the following lemma, which is a uniform-in-$N$ version of  \cite[Lemma~4.1~(4)]{JessenMikosch},

\blem\label{lem:JM} Let $N\in\N$ and $X_1,\dots,X_N$ be independent and identically distributed  such that there are $C_0,\al\in(0,\infty)$ with  $\P(X_1>R)\geq C_0R^{-\al}$ for all $R\geq1$. Then there is $C\in(0,\infty)$ such that for all $N\in\N$ and $R>1$,
$$ \P\Biggl(\prod_{i=1}^N X_i >R\Biggr)\geq \frac{C^N}{(N-1)!} R^{-\al}\log^{N-1} R. $$
\elem
\bpr Conditionally on the event $A=\bigcap_{i=1}^N \{X_i>1\}$, the $X_i$'s are still independent and identically distributed and satisfy
\begin{align*} \P(X_1> R \mid A) &= \frac{\P(X_1>R\vee 1)}{\P(X_1>1)} \geq 
\bone_{(0,1)}(R)+\frac{C_0}{\P(X_1>1)} R^{-\al}\bone_{[1,\infty)}(R)\\ &\geq 
C_1R^{-\al}\bone_{[C_1^{1/\al},\infty)}(R)\end{align*}
with $C_1=C_0/\P(X_1>1)$, which belongs to $(0,1]$ by assumption. Let $Y_1,\ldots,Y_N$ be 
independent Pareto random variables with scale parameter $C_1^{1/\al}$ and shape parameter 
$\al$ (in particular, their tail function is given by the right-hand side of the previous 
display). Using quantile representation, one can construct these variables in such a way 
that conditionally on $A$, we have $X_i\geq Y_i$ almost surely. Thus,
$$ \P\Biggl(\prod_{i=1}^N X_i >R\Biggr) \geq \P(X_1>1)^N \P\Biggl(\prod_{i=1}^N X_i 
>R\mathrel{\bigg|} A\Biggr)\geq \P(X_1>1)^N\P\Biggl(\prod_{i=1}^N Y_i >R\Biggr).$$
It is an elementary result that $\sum_{i=1}^N \log ({Y_i}/{C_1^{1/\al}})$ is $\Ga(N,\al)$-distributed. Therefore,
\begin{align*}
	 \P\Biggl(\prod_{i=1}^N Y_i >R\Biggr)& = \P\Biggl(\sum_{i=1}^N \log 
\frac{Y_i}{C_1^{1/\al}} >\log\frac{R}{C_1^{N/\al}}\Biggr)=\frac{\al^N}{(N-1)!} \int_{\log 
(R/C_1^{N/\al})}^\infty u^{N-1}e^{-\al u}\,\dd u\\
	 &= \frac{\al^N}{(N-1)!}\int_{R/C_1^{N/\al}}^\infty u^{-\al-1} \log^{N-1}(u) \,\dd u\\
	 &\geq\frac{\al^N}{(N-1)!}\log^{N-1} \frac{R}{C_1^{N/\al}} \int_{R/C_1^{N/\al}}^\infty u^{-\al-1} \,\dd u\\ &=\frac{\al^{N-1}C_1^N}{(N-1)!}R^{-\al}\log^{N-1} \frac{R}{C_1^{N/\al}}.
\end{align*}
By decreasing $C_0$ if necessary, there is no loss of generality if we assume that $C_1<1$. This implies
\[ \P\Biggl(\prod_{i=1}^N X_i >R\Biggr)\geq\frac{\al^{N-1}(C_1\P(X_1>1))^N}{(N-1)!}R^{-\al}\log^{N-1} R, \]
proving the lemma.
\epr

\brem\label{rem:unbounded} 
In the lower bound proofs of both Theorem~\ref{thm:sol-ht} and \ref{thm:sol-lt}, it was 
crucial that   \eqref{eq:SHE} is considered on the whole space $\R^d$. To illustrate this 
point, let us take a standard Poisson noise (i.e., $\la=\delta_1$) and restrict the noise to a spatial domain $D$ with finite and positive Lebesgue measure $\lvert D\rvert$. Because $\lvert D\rvert<\infty$, there is only a finite Poisson-distributed number 
$L$ of points up to time $t$. Therefore,
$$ Y(t,x)=1+\sum_{N=1}^L \int_{((0,t)\times D)^N} \prod_{i=2}^{N+1} g(\Delta t_i,\Delta x_i)\prod_{j=1}^N \La(\dd t_j,\dd x_j). $$
Call the $N$-fold integral $I_N(t,x)$.
Either by bounding the tail probability explicitly or by estimating the $p$th moment and then optimizing, one can show that 
$$ \P( I_N(t,x) > R)\leq \frac{C^N}{N^N}R^{-1-\frac 2d}\log^N R$$
for some $C>0$ that is independent of $N$ and $R$.
Therefore, by conditioning on $L$,  
\begin{align*}
\P(Y(t,x)>R)&\leq \sum_{L=1}^\infty \frac{e^{-t\lvert D\rvert}(t\lvert D\rvert)^L}{L!} \P\Biggl(\sum_{N=1}^L I_N(t,x)> R-1\Biggr)\\&\leq \sum_{L=1}^\infty \frac{e^{-t\lvert D\rvert}(t\lvert D\rvert)^L}{L!}\sum_{N=1}^L \P(I_N(t,x)>\tfrac{R-1}{L})\\
&\leq \sum_{L=1}^\infty \frac{e^{-t\lvert D\rvert}(t\lvert D\rvert)^L}{L!} \sum_{N=1}^L \frac{C^N}{N^N} R^{-1-\frac2d} L^{1+\frac2 d} \log^N  R.
\end{align*}
Since $L!\geq (L-N)!N!$ and $(t\lvert D\rvert)^L L^{1+2/d} \leq C^L= C^{L-N} C^N$, we can use Lemma~\ref{lem:Stirling} to get
\begin{align*}
\P(Y(t,x)>R)
&\leq  e^{-t\lvert D\rvert}R^{-1-\frac2d} \sum_{N=1}^\infty \frac{(C\log R)^N}{N!N^N} \sum_{L=N}^\infty \frac{C^{L-N}}{(L-N)!}  \leq CR^{-1-\frac 2d} e^{C(\log R)^{1/2}},
\end{align*}
which is  much smaller than the tails we obtained in Theorem~\ref{thm:sol-lt}. 

Similarly, if $\dot \La$ has Lévy measure \eqref{eq:la} with $\al<1+\frac2d$, then one can show that 
$$ \P( I_N(t,x) > R)\leq \frac{C^N}{N^{(1+\theta_\al)N}}R^{-\al}\log^N R.$$
Again, if $\dot \La$ only acts on $D$,   we have
\beq\label{eq:tail}
\P(Y(t,x)>R)
\leq  CR^{-\al} \sum_{N=1}^\infty \frac{(C\log R)^N}{N!N^{(1+\theta_\al)N}}    \leq CR^{-\al} e^{C(\log R)^{1/(2+\theta_\al)}},
\eeq
which is  much lighter than the tails  derived in  Theorem~\ref{thm:sol-ht}.

Finally, let us mention \cite{Cohen08}, where the exact tail behavior of   
solutions to stochastic differential equations (SDEs) with multiplicative stable noise
was determined. 
Their proof heavily relies on an exact representation of the solution as a random product 
of heavy-tailed terms, which is not available for the SHE. In addition, the SDE situation 
differs from the SHE in two aspects: first, space only consists of one point and is 
therefore bounded; second, the fundamental solution, unlike the heat kernel, has no 
singularity. This is why the tail behavior of the solution to a stable SDE is of the form 
given by the right-hand side of \eqref{eq:tail} but without $\theta_\al$ in the exponent. 
The reader may verify that $\theta_\al$ enters \eqref{eq:tail} only because  the heat 
kernel has a singularity.
\erem

\subsection{Tail bounds for the local supremum}

We need the following   assumption.

\settheoremtag{(Sup)} 
\bcond\label{cond:sup} If $d=1$, then  $m_q(\la)<\infty$ for some $q\in(0,2)$. If $d\geq2$,   we have   
$m^{(\log)}_{2/d}(\la)<\infty$.
\econd

Note that $m_2(\la)<\infty$ for all Lévy measures, so Condition~\ref{cond:sup} is rather 
mild in dimension $d=1$. Also, if $m_{2/d+\eps}(\la)=\infty$ for some small 
$\eps>0$, then the solution to \eqref{eq:SHE} under additive noise is    unbounded on any 
non-empty open subset of $\R^d$ at a fixed time, see \cite[Theorem 3.3]{CDH}. Thus, 
Condition~\ref{cond:sup} is also rather mild in dimensions $d\geq2$. Recall that 
$\calq$ is the set of unit cubes .

\bthm\label{thm:sup-ht}   Assume Condition~\ref{cond:ht} for some $\al\in(0,\frac 2d]$ and Condition~\ref{cond:sup}.
For every $t>0$, there are constants $C_1,C_2\in(0,\infty)$ such that for all $Q\in\calq$ and $R>1$,
\beq\label{eq:sup-ht} C_1R^{-\al}e^{C_1 (\log R)^{1/(1+\theta_\al)}}\leq \P\biggl(  \sup_{x\in Q} Y(t,x)   >R\biggr)\leq  C_2R^{-\al}e^{C_2 (\log R)^{1/(1+\theta_\al)}}.\eeq
\ethm

\bpr We only need to prove the upper bound. The lower bound immediately follows from Theorem~\ref{thm:sol-lt}. Without loss of generality, assume that $Q=(0,1)^d$. We first consider $d\geq2$, in which case $m_1(\la)=\int_{(0,1)} z\,\la(\dd z)<\infty$. Therefore, $Y(t,x)=e^{-m_1(\la)t} \wh Y(t,x)$, where $\wh Y(t,x)$ is the mild solution to 
\beq\label{eq:Yhat} \wh Y(t,x)=1+\int_0^t\int_{\R^d}\int_{(0,\infty)} g(t-s,x-y)\wh Y(s,y)z\,\mu(\dd s,\dd y,\dd z), \eeq
and it suffices to prove the second inequality in \eqref{eq:sup-ht} for $\wh Y$ instead of $Y$. Similarly to \eqref{eq:separate} and \eqref{eq:separate2}, we have, with obvious notation, that
\beq\label{eq:separate3} \wh Y(t,x)=\sum_{N=0}^\infty \int_{((0,t)\times\R^d)^N}\wh Y_<(t_1,x_1)\prod_{i=2}^{N+1} \wh u_<(t_{i-1},x_{i-1};t_i,x_i) \prod_{j=1}^N  \,\La_\geq(\dd t_j,\dd x_j). \eeq
Therefore, using the estimate $(\sum a_i)^p\leq \sum a_i^p$ and independence, we obtain for any $0<p<\al<1$ that
\beq\label{eq:Y<}  \begin{split}
	\E\biggl[\sup_{x\in Q} \wh Y(t,x)^p\biggr]&\leq   \sum_{N=0}^\infty M_p(\la)^N \int_{((0,t)\times\R^d)^N} \E[\wh Y_<(t_1,x_1)^p]\\ 
&\quad\times\E\biggl[\sup_{x\in Q} \wh u_<(t_N,x_N;t,x)^p\biggr]\prod_{i=2}^N\E[\wh 
u_<(t_{i-1},x_{i-1};t_i,x_i)^p]\prod_{j=1}^N  \dd t_j\,\dd x_j. 
\end{split}\raisetag{-3\baselineskip}\eeq
Combining Lemma \ref{lem:sup}, \eqref{eq:p<1aux}, Lemma 3.5 in \cite{Chong1}, and
Lemma~\ref{lem:Stirling}, we obtain
\begin{align*}
&\E\biggl[\sup_{x\in Q} \wh Y(t,x)^p\biggr]\\
&\quad\leq \frac{C}{1-\frac d2 
p}\sum_{N=0}^\infty   (CM_p(\la) )^N   \hspace*{-3pt}
\int_{((0,t)\times\R^d)^N} \hspace*{-3pt}
(t-t_N)^{-\frac 
d2p}e^{-C\lvert x_N\rvert^2}\prod_{i=2}^N g(\Delta t_i,\Delta x_i)^p\prod_{j=1}^N \dd 
t_j\,\dd x_j \\
	&\quad\leq \frac{C}{1-\frac d2 p}\sum_{N=0}^\infty (CM_p(\la))^N  \int_{(0,t)^N}\bone_{\{t_1<\dots<t_N\}}(t-t_N)^{-\frac d2p} \prod_{i=2}^N (\Delta t_i)^{-\frac d2(p-1)}\prod_{j=1}^N \dd t_j  \\
	&\quad\leq\frac{C\Ga(1-\frac d2 p)t^{1-\frac d2 p}}{(1-\frac d2 
p)\Ga(\theta_p)}\sum_{N=0}^\infty \frac{(CM_p(\la) 
t^{\theta_p} \Ga(\theta_p))^N}{\Gamma(2\vee(N\theta_p-\frac d2))} \leq \frac{C\Ga(1-\frac d2 p)}{1-\frac d2 p} e^{CM_p(\la)^{1/\theta_p}}.
\end{align*}
By our assumptions on $\la$, we have $M_p(\la)\sim C/(\al-p)$ as $p\uparrow \al$. Therefore, if $\al\in(0,\frac 2d)$,
\begin{align*} \P\biggl(  \sup_{x\in Q} \wh Y(t,x)   >R\biggr)\leq  R^{-p}\,	\E\biggl[\sup_{x\in Q} \wh Y(t,x)^p\biggr]\leq C R^{-p}e^{C(\al-p)^{-1/\theta_p}}
\end{align*}
for some constant $C$ that does not depend on $p$. If $\al = \frac2d$, then we obtain
an extra factor $(1-\frac d2 p)^{-2}$ (since $\Gamma(x) \sim x^{-1}$ as $x \downarrow 
0$) in the previous line. But this is bounded by $Ce^{C( 2/d-p)^{-1/\theta_p}}$, so the 
last display remains valid upon enlarging the value of $C$ in the second step. So in all 
cases, as in the upper bound proof of Theorem~\ref{thm:sol-ht}, the current proof   can 
be completed by choosing
 $p=\al-(\theta_\al\log R)^{-1/(1+1/\theta_\al)}$. 
%

If $d=1$ and $\al \leq 1$, we can re-use \eqref{eq:Y<} and the subsequent argument except 
that we have to replace   $\wh Y_<$ and $\wh u_<$ by $Y_<$ and $u_<$ and use Jensen's 
inequality to raise the $p$-moments to $\theta$-moments for some $\frac 32\vee q<\theta<2$ 
before applying Lemma~\ref{lem:sup}, where $q$ is given in \emph{Condition (Sup)}. 
If $1 <\al \leq \tfrac{3}{2} \vee q$, 
then choose $\theta \in (\frac 32\vee q, 2)$ fix, while if 
$\alpha > \tfrac{3}{2} \vee q$ then let $\theta = p$. In both cases we let 
$p \uparrow \alpha$. We first observe that applying 
$\E_<[(\cdot)^{\theta}]^{1/\theta}$ instead of $\E[(\cdot)^p]$ in \eqref{eq:Y<} leads to
\begin{multline*}
\E_<\biggl[\sup_{x\in Q}   Y(t,x)^{\theta}\biggr]^{\frac{1}{\theta}}\leq 
  \sum_{N=0}^\infty   
\int_{((0,t)\times\R)^N} \E[  Y_<(t_1,x_1)^\theta]^{\frac{1}{\theta}}\\ 
 \times\E\biggl[\sup_{x\in Q}   
u_<(t_N,x_N;t,x)^\theta \biggr]^{\frac{1}{\theta}}\prod_{i=2}^N\E[  u_<(t_{i-1},x_{i-1};t_i,x_i)^\theta]^{\frac{1}{\theta}}\prod_{j=1}^N \La_{\geq}( \dd t_j,\dd x_j). 
\end{multline*}
By Lemma~\ref{lem:sup} and \cite[Cor.\ 6.5]{Berger21b}, the left-hand side is further bounded by
\begin{align*}
&\biggl(\frac{C}{1-\frac \theta2}\biggr)^{\frac 1\theta} \Biggl( \sum_{N=0}^\infty  C^N \int_{((0,t)\times\R)^N}   (t-t_N)^{-\frac 12}e^{-C^{-1}\lvert x_N\rvert^2} \prod_{i=2}^N g(\Delta t_i,\Delta x_i)\prod_{j=1}^N \La_{\geq}( \dd t_j,\dd x_j)\\
	&\qquad=\biggl(\frac{C}{1-\frac \theta2}\biggr)^{\frac 1\theta} \Biggl(1+   C  \int_0^t\int_\R   (t-t_N)^{-\frac 12}e^{-C^{-1}\lvert x_N\rvert^2} Y'_\geq(t_N,x_N)\, \La_{\geq}( \dd t_N,\dd x_N)\Biggr),
\end{align*}
where $Y'_\geq(t,x)$ is the mild solution to the stochastic heat equation with initial 
condition $1$ and noise $C\dot\La_\geq$. Hence, writing $\La_\geq(\dd s,\dd 
y)=(\La_\geq(\dd s,\dd y)-M_1(\la)\,\dd s\,\dd y) + M_1(\la)\,\dd s\,\dd y$, we obtain 
from the conditional Jensen's inequality, Minkowski's integral inequality, and the 
BDG inequality that
\begin{multline*}
\E\biggl[\sup_{x\in Q}   Y(t,x)^{p}\biggr]^{\frac{1}{p}}\leq \E\Biggl[	\E_<\biggl[\sup_{x\in Q}   Y(t,x)^{\theta}\biggr]^{\frac{p}{\theta}}\Biggr]^{\frac1p}\\
\leq \biggl(\frac{C}{1-\frac \theta2}\biggr)^{\frac 1\theta} \Biggl(1+   C M_1(\la) \int_0^t\int_\R   (t-s)^{-\frac 12}e^{-C^{-1}\lvert y\rvert^2} \E[ Y'_\geq(s,y)^p]^{\frac 1p}\,   \dd s\,\dd y\\
+\Biggl( CM_p(\la) \int_0^t\int_\R (t-s)^{-\frac p2} e^{-C^{-1}p\lvert y\rvert^2} \E[Y'_\geq(s,y)^p]\,\dd s\,\dd y\Biggr)^{\frac1p}\Biggr).
\end{multline*}
Since $\E[Y'_\geq(s,y)^p]\leq Ce^{CM_p(\la)^{1/\theta_p}}$ (cf.\ \eqref{eq:mom-ub}), it follows that
\begin{align*}
	\E\biggl[\sup_{x\in Q}   Y(t,x)^{p}\biggr]^{\frac{1}{p}}&\leq \biggl(\frac{C}{1-\frac \theta2}\biggr)^{\frac 1\theta} \Biggl(1+Ce^{M_p(\la)^{1/\theta_p}}+\biggl(\frac{CM_p(\la)}{1-\frac p2}\biggr)^{\frac 1p}e^{CM_p(\la)^{1/\theta_p}}\Biggr).
\end{align*}
Since $\theta$ is fixed, we obtain (both when $\al\in(0,\frac2d)$ and when $\al=\frac 2d$) that
$$ \E\biggl[\sup_{x\in Q}   Y(t,x)^{p}\biggr]^{\frac{1}{p}}\leq e^{CM_p(\la)^{1/\theta_p}}, $$
from which the second inequality in \eqref{eq:sup-ht} follows as before.
\epr

In the proof of the previous theorem, we used some technical moment estimates on the local 
supremum of $Y_<$, $u_<$ and $\widehat Y_{<}$, $\widehat u_{<}$.

\blem\label{lem:sup} Suppose that $d\geq2$. If $m^{(\log)}_{2/d}(\la)<\infty$,
then, for every $T>0$, there exists $C\in(0,\infty)$ such that for all 
$0<s<t\leq T$, $Q \in \calq$, $y\in \R^d$ and $0<p<\frac 2d$,
\beq\label{eq:ub-u<}\begin{split}  \E\biggl[\sup_{x\in Q} \wh Y_<(t,x)^p\biggr] &\leq C(1-\tfrac d2 
p)^{-1},\\ \E\biggl[\sup_{x\in Q} \wh u_<(s,y;t,x)^p\biggr] 
 &\leq C(1-\tfrac d2 p)^{-1}(t-s)^{-\frac d2p}e^{-C^{-1}\lvert y\rvert^2}. \end{split}
\eeq
If $d=1$ and there is $q\in(0,2)$ such that $m_q(\la)=\int_{(0,1)} z^q\,\la(\dd z)<\infty$, then there exists $C>0$ such that for all $p\in(q\vee \frac 32,2)$, 
\beq\label{eq:ub-u<2}  \begin{split}\E\biggl[\sup_{x\in Q}   Y_<(t,x)^p\biggr]& \leq C(1-\tfrac p2 
)^{-1},\\ \E\biggl[\sup_{x\in Q}   u_<(s,y;t,x)^p\biggr]  
&\leq C(1-\tfrac p2 )^{-1}(t-s)^{-\frac p2}e^{-C^{-1}\lvert y\rvert^2}. \end{split}\eeq
\elem

Note the lower bound $q \vee \tfrac{3}{2}$ for $d = 1$ in the moment inequality.
This is a minor technicality, one can extend the inequality for smaller $p$  
applying Lyapunov's inequality for moments.

\bpr
We only prove the uniform moment bound on $\wh u_<(s,y;t,x)$ or $ u_<(s,y;t,x)$; the 
bounds on $\wh Y_<(t,x)$ and $Y_<(t,x)$ can be shown in a similar fashion.
We may and do assume that $Q = (0,1)^d$.
By definition,
$$ \wh u_<(s,y;t,x)=g(t-s,x-y)+\int_s^t \int_{\R^d}\int_{(0,1)} g(t-r,x-v)\wh u_<(s,y;r,v)z\,\mu(\dd r,\dd v,\dd z). $$
If $y\in (-d-1,d+1)^d$, we use the bound
\beq\label{eq:bound}\begin{split}
&\sup_{x\in Q} \wh u_<(s,y;t,x) \\
&\quad\leq C\Biggl[ (t-s)^{-\frac d2} + \int_s^t 
\int_{(-d-1,d+1)^d} \int_{(0,1)} (t-r)^{-\frac d2} \wh u_<(s,y;r,v)z\,\mu(\dd r,\dd v,\dd 
z)\\
&\qquad+ \int_s^t \int_{\R^d\setminus (-d-1,d+1)^d} \int_{(0,1)} (t-r)^{-\frac 
d2}e^{-\frac{(\lvert v\rvert - \sqrt{d})^2}{2 (t-r)}}\wh u_<(s,y;r,v)z\,\mu(\dd r,\dd 
v,\dd z)\Biggr].
\end{split}\!\eeq
In the second integral, we have $(t-r)^{-  d/2}e^{- {(\lvert v\rvert - \sqrt{d})^2}/({2 
(t-r)})}\leq Ce^{- {(\lvert v\rvert - \sqrt{2})^2}/({4 T})}\leq C$. Furthermore, note that 
there are two ways of estimating the $p$th moment (for $p\in(0,1)$) of a Poisson integral 
of an adapted process $f$, namely
\beq\label{eq:twoways} \E\Biggl[\Biggl(\int f \,\dd \mu\Biggr)^p\Biggr] \leq \int \E[f^p] \,\dd \nu\qquad\text{or}\qquad   \E\Biggl[\Biggl(\int f \,\dd \mu\Biggr)^p\Biggr]\leq \Biggl(\int \E[f]\,\dd\nu\Biggr)^p, \eeq
depending on whether we use $(\sum a_i)^p\leq \sum a_i^p$ or Jensen's inequality. Applying the first method to the first integral in \eqref{eq:bound} if $(t-r)^{-d/2}z>1$ and the second method to the first integral in \eqref{eq:bound} if $(t-r)^{-d/2}z\leq 1$ as well as to the second integral in \eqref{eq:bound}, we derive the bound
\begin{align*}
&\E\biggl[\sup_{x\in Q} \wh u_<(s,y;t,x)^p\biggr]\leq C\Biggl[ (t-s)^{-\frac d2 p} \\
&\qquad + \int_s^t\int_{(-d-1,d+1)^d} \int_{(0,1)} 
(t-r)^{-\frac d2p}z^p\bone_{\{(t-r)^{-  d/2}z>1\}}\E[ \wh u_<(s,y;r,v)^p]\,\dd r\,\dd 
v\,\la(\dd z)\\
&\qquad +\Biggl(\int_s^t\int_{(-d-1,d+1)^d} \int_{(0,1)} (t-r)^{-\frac 
d2}z\bone_{\{(t-r)^{-  d/2}z\leq1\}}\E[\wh u_<(s,y;r,v)]\,\dd r\,\dd v\,\la(\dd 
z)\Biggr)^p\\
&\qquad +\Biggl(\int_s^t \int_{\R^d\setminus (-d-1,d+1)^d} \int_{(0,1)}  
\E[\wh u_<(s,y;r,v)]z\, \dd r\,\dd v\,\la(\dd z)\Biggr)^p\Biggr].
\end{align*}
As $\int_{\R^d} \E[\wh u_<(s,y;r,v)^p]\,\dd v\leq \int_{\R^d}  \E[\wh u_<(s,y;r,v)]^p\,\dd v\leq e^{m_1(\la) pt} \int_{\R^d}  g(r-s,v-y)^p\,\dd v\leq C(r-s)^{(1-p)d/2}\leq CT^{(1-p)d/2}\leq C$ (which remains true for $p=1$), we  obtain
\begin{multline*}
\E\biggl[\sup_{x\in Q} \wh u_<(s,y;t,x)^p\biggr] \leq C\Biggl[(t-s)^{-\frac d2 p} + \int_s^t\int_{(0,1)} (t-r)^{-\frac d2 p} z^p\bone_{\{(t-r)^{-  d/2}z>1\}}\,\dd r\,\la(\dd v)\\
+ \Biggl(\int_s^t \int_{(0,1)} (t-r)^{-\frac d2}z\bone_{\{(t-r)^{-  d/2}z\leq1\}} \,\dd 
r\,\la(\dd v)\Biggr)^p+m_1(\la)^p \Biggr].
\end{multline*}
For $0<p<\frac 2d$, the remaining   integrals can be bounded by
\begin{align*}
\int_0^t \int_{(0,1)} r^{-\frac d2 p} z^p\bone_{\{r^{-  d/2} z>1\}}\,\dd r\,\la(\dd v)&\leq \frac{m_{2/d}(\la)}{1-\frac d2 p},\\
\int_0^t \int_{(0,1)} r^{-\frac d2} z\bone_{\{r^{-  d/2} z\leq 1\}}\,\dd r\,\la(\dd v)&\leq 
Cm_{2/d}^{(\log)}(\la),
\end{align*}
respectively, which yields the claim for $y\in (-d-1,d+1)^d$.
If $y\in \R^d\setminus (-d-1,d+1)^d$, we only need to replace the uniform bound on 
$g(t-s,x-y)^p$ by $Ce^{-(\lvert y\rvert-\sqrt{d})^2/(4 T)}$.

If $d=1$, to ease notation, write $v$ for the stochastic part of $u_<$, that is,
\begin{equation} \label{eq:def-v}
v(s,y;t,x) = 
\int_s^t \int_{\R^d}\int_{(0,1)} g(t-r,x-v)\wh u_<(s,y;r,v)z\,\mu(\dd r,\dd v,\dd z).
\end{equation}
We first prove that for $p\in(\frac 32\vee q,2)$ and all $s,t\in[0,T]$ and 
$x,x'\in Q$,
\begin{equation}\label{eq:incr}
\E[\lvert v(s,y;t,x)- v(s,y;t,x')\rvert^p] 
\leq C (t-s)^{-\frac p2}e^{-C^{-1}\lvert 
y\rvert^2}\lvert x-x'\rvert^{3-p}
\end{equation}
for some constant that does not depend on $p$, $(s,y)$ or $(t,x,x')$ (but may  depend on $q$, $\la$ and $T$).
To this end, we 
use the BDG inequality and \cite[Cor.\ 6.5]{Berger21b} to get
\beq\label{eq:incr2}\mathtoolsset{multlined-width=0.9\displaywidth} 
\begin{multlined} 
\E[\lvert v(s,y;t,x)- v(s,y;t,x')\rvert^p] \leq 
\\
Cm_p(\la)\int_s^t\int_\R \lvert g(t-r,x-v)-g(t-r,x'-v)\rvert^pg(r-s,v-y)^p\,\dd r\,\dd 
v.
\end{multlined}\eeq
Note that $m_p(\la)\leq m_q(\la)$ and bound the integral by
\beq\label{eq:incr3}\begin{split}
&C\biggl(\frac{\lvert x-x'\rvert}{t-s}\biggr)^{3-p}	\int_s^{\frac{s+t}2} \int_\R ( g(t-r,x-v)+g(t-r,x'-v))^{2p-3} (r-s)^{-\frac p2}e^{-\frac{p\lvert v-y\rvert^2}{2 (r-s)}}\,\dd r\,\dd v\\
&\qquad\qquad+C(t-s)^{-\frac p2}\int_{\frac{s+t}2}^t \int_\R\lvert g(t-r,x-v)-g(t-r,x'-v)\rvert^p e^{-\frac{p\lvert v-y\rvert^2}{2 (r-s)}}\,\dd r\,\dd v,
\end{split}\raisetag{-2.5\baselineskip}\eeq
where we used the fact that $\lvert g(t-s,x-y)-g(t-s,x'-y)\rvert$ can either be simply bounded using the triangle inequality or using the mean-value theorem (with $\lvert\partial_x g(t,x)\rvert\leq \frac Ct$).

Let $I_1$ and $I_2$ be the two  expressions in \eqref{eq:incr3}. Then
\begin{align*}
I_1&\leq C\biggl(\frac{\lvert x-x'\rvert}{t-s}\biggr)^{3-p}	\int_s^{\frac{s+t}2}  
(t-r)^{\frac32-p} (r-s)^{-\frac p2}\int_\R ( e^{-\frac{\lvert x-v\rvert^2}{C(t-r)}}+ 
e^{-\frac{\lvert x'-v\rvert^2}{C(t-r)}})e^{-\frac{\lvert v-y\rvert^2}{C (r-s)}}\,\dd 
v\,\dd r\\
&\leq C\biggl(\frac{\lvert x-x'\rvert}{t-s}\biggr)^{3-p}	\int_s^{\frac{s+t}2} 
(t-r)^{2-p} (r-s)^{-\frac {p-1}2}(t-s)^{-\frac12}( e^{-\frac{\lvert x-y\rvert^2}{C(t-s)}}+ 
e^{-\frac{\lvert x'-y\rvert^2}{C(t-s)}})\,\dd r\\
&\leq C(t-s)^{-\frac 32}e^{-C^{-1}\lvert y\rvert^2}\lvert x-x'\rvert^{3-p}\int_s^t 
(r-s)^{-\frac{p-1}{2}}\,\dd r,
\end{align*}
while, by distinguishing whether $v\in(-2,2)$ or not and by using \cite[Lemme~A2]{SLB98} and the bound $\lvert\partial_x g(t,x)\rvert\leq  Ce^{-\lvert x\rvert^2/(Ct)}$ for $\lvert x\rvert >1$, we obtain
\begin{align*}
	I_2&\leq C(t-s)^{-\frac p2}\Biggl(e^{-C^{-1}\lvert y\rvert^2}\int_s^t\int_\R \lvert g(t-r,x-v)-g(t-r,x'-v)\rvert^p\,\dd r\,\dd v \\
	&\quad + \lvert x-x'\rvert^p\int_s^t\int_\R e^{-\frac{\lvert v\rvert^2}{C(t-r)}}e^{-\frac{\lvert v-y\rvert^2}{C(r-s)}}\,\dd v\,\dd r\Biggr)\\
	&\leq C(t-s)^{-\frac p2}e^{-C^{-1}\lvert y\rvert^2}\lvert x-x'\rvert^{3-p}.
\end{align*}
Therefore, both $I_1$ and $I_2$ are bounded by the right-hand side of \eqref{eq:incr}, as claimed.

From here, we get a moment bound on the local supremum of $v$ by using    a 
quantitative version of Kolmogorov's continuity theorem (see \cite[Eq.\ (6.7)]{Conus13} 
with $m=0$):
\begin{align*} 
\E\biggl[\sup_{x\in Q} v(s,y;t,x)^p\biggr]  &\leq C\Biggl(\E[v(s,y;t,0)^p ]+ 
\E\Biggl[\sup_{x,x'\in Q} \lvert v(s,y;t,x) - v(s,y;t,x')\rvert^p\Biggr]  \Biggr)\\
	&\leq C(t-s)^{-\frac p2} e^{-\frac{\lvert y\rvert^2}{2 T}}+C (t-s)^{-\frac p2}e^{-C^{-1}\lvert y\rvert^2}\frac{2^{(2p-1)/p}}{1-2^{-(2-p)/p}}\\
	&\leq C (1-\tfrac p2)^{-1}(t-s)^{-\frac p2}e^{-C^{-1}\lvert y\rvert^2}.
\end{align*}
Thus
\[
\E  \left[ \sup_{x \in Q} u_{<}(s,y;t,x)^p \right]
\leq  C (t-s)^{-\frac p2} + \E\biggl[\sup_{x\in Q} v(s,y;t,x)^p\biggr],
\]
and the statement follows.
\epr

For the tail bounds of the local supremum when the noise is relatively light-tailed, we need a preparatory result, which is a decoupling inequality for tail probabilities.
\blem\label{lem:dec} Let $(\calf_t)_{t\geq0}$ be a filtration on a probability space $(\Om,\calf,\P)$ and $\mu$ be an $(\calf_t)_{t\geq0}$-Poisson random measure on $[0,\infty)\times E$, where $E$ is a Polish space, with intensity measure $\nu$. Consider a nonnegative $(\calf_t)_{t\geq0}$-adapted process $H:\Om\times[0,\infty)\times E\to\R$ and a copy $H'$, which is defined on an additional probability space $(\Om',\calf',\P')$ (and therefore independent of $(\calf_t)_{t\geq0}$ on the product space). Let $\ov \P=\P\otimes\P'$ and define the random variables
$$ X= \int_{(0,\infty)\times E} H(t,x) \, \mu(\dd t,\dd x),\qquad X'=\int_{(0,\infty)\times E} H'(t,x) \, \mu(\dd t,\dd x).$$
Then, for any $\theta,R>0$,
\beq\label{eq:dec} \P(X>R)\leq 7\theta\,\P(X>\tfrac 13 R)+2\,\ov\P(X'>\tfrac 16R) +\theta^{-1}\ov\P(X'>\tfrac16\theta R). \eeq
In particular, for any $p>0$,
$$ \exists C>0,\ \forall R>0 : \ov\P(X'>R)\leq CR^{-p} \implies \exists C'>0,\ \forall 
R>0 : \P(X>R)\leq C'R^{-p}.$$
\elem
\bpr
The inequality \eqref{eq:dec} was shown on page 38 of \cite{Kallenberg17}. Therefore, only the last statement needs a proof. If $\ov\P(X'>R)\leq CR^{-p}$ for all $R>0$, \eqref{eq:dec} implies
$$  \P(X>R)\leq 7\theta\,\P(X>\tfrac 13 R)+2( 6^pC) R^{-p}  +6^pC\theta^{-1-p}R^{-p}. $$
Choose $\theta< \frac17 3^{-p}$ and define $K=2 (6^pC)+6^pC\theta^{-1-p}$. Then iterating the previous equation yields, for any $n\in\N$,
\begin{align*}
	\P(X>R)&\leq 7\theta\,\P(X>\tfrac 13 R) + KR^{-p}\leq 7\theta(7\theta\,\P(X>\tfrac 1{3^2} R)+ 3^pKR^{-p})+ KR^{-p}\\
	&=KR^{-p} +  3^p(7\theta) KR^{-p}+(7\theta)^2\P(X>\tfrac1{3^2} R)\\
	&\leq (1+3^p(7\theta))KR^{-p} + (7\theta)^2(7\theta\,\P(X>\tfrac 1{3^3} R) + K3^{2p}R^{-p})\\
	&=(1+3^p(7\theta)+[3^p(7\theta)]^2)KR^{-p} + (7\theta)^3\P(X>\tfrac1{3^3} R)\\
	&\leq \cdots\leq \Biggl(\sum_{j=0}^n [3^p(7\theta)]^j \Biggr)KR^{-p} + (7\theta)^{n+1}\P(X>\tfrac1{3^{n+1}} R).
\end{align*}
Since $\theta< \frac17 3^{-p}$, bounding the last probability by $1$ and letting $n\to\infty$, we conclude that
\[\P(X>R)\leq \frac{K}{1-3^p(7\theta)} R^{-p}.\qedhere\]
\epr

\bthm\label{thm:sup-lt} Assume Condition~\ref{cond:lt} with $\al=\frac 2d$ and Condition~\ref{cond:sup}.  Then, for every $t>0$, there are $C_1,C_2\in(0,\infty)$ such that for all $R>1$ and $Q\in \calq$,
\beq\label{eq:sup-lt} C_1R^{-\frac 2d}\leq \P\biggl(\sup_{x\in Q} Y(t,x)\biggr) \leq C_2R^{-\frac 2d}. \eeq
\ethm

\bpr   \bff{Step 1: Upper bound} 
\smallskip

\noindent   Without loss of generality, we may assume that $Q=(0,1)^d$. By assumption, $m_1(\la)<\infty$ if $d\geq2$. Thus, if we define $\wt Y=Y$ when $d=1$ and $\wt Y=\wh Y=e^{m_1(\la) t} Y$ when $d\geq2$, then it suffices to prove the theorem for $\wt Y$ instead of $Y$. To this end, write $\wt Y$ as a sum $1+Y_1+Y_2+Y_3+Y_4\bone_{\{d=1\}}$, where
 \begin{align*}
 	Y_1(t,x)&=\int_0^t\int_{D}\int_{(0,\infty)} 
g(t-s,x-y)z\bone_{\{(t-s)^{-d/2}z\geq1\}}\wt Y(s,y) \,\mu(\dd s,\dd y,\dd z),\\
 	Y_2(t,x)&=\int_0^t\int_{\R^d\setminus D}\int_{(0,\infty)} 
g(t-s,x-y)z\bone_{\{(t-s)^{-d/2}z\geq1\}}\wt Y(s,y) \,\mu(\dd s,\dd y,\dd z),\\
 	Y_3(t,x)& =\begin{cases}\displaystyle \int_0^t\int_{\R}\int_{(0,\infty)} 
g(t-s,x-y)z\bone_{\{(t-s)^{-1/2}z<1\}}\wt Y(s,y) \,(\mu-\nu)(\dd s,\dd y,\dd z)\\
\hfill  \text{if } d=1,\\
 \displaystyle	\int_0^t\int_{\R^d}\int_{(0,\infty)} 
g(t-s,x-y)z\bone_{\{(t-s)^{-d/2}z<1\}}\wt Y(s,y) \,\mu(\dd s,\dd y,\dd z)\\
\hfill \text{if } d\geq2,\end{cases}	\\
 	Y_4(t,x)&=\int_0^t\int_{\R}\int_{(0,\infty)} g(t-s,x-y)z(\bone_{\{(t-s)^{-d/2}z< 1\}}-\bone_{\{z<1\}})\wt Y(s,y)\,\dd s\,\dd y\,\la(\dd z)
 \end{align*}
and $D=(-2,2)^d$.
We analyze each part separately. 

For $Y_1$,   observe that $\sup_{x\in Q} Y_1(t,x)\leq C X$, 
where
$$ X=\int_0^t\int_{D}\int_{(0,\infty)} (t-s)^{-\frac d2} z\bone_{\{(t-s)^{-d/2}z\geq1\}}\wt Y(s,y)z\,\mu(\dd s,\dd y,\dd z).$$
By Lemma~\ref{lem:dec}, it suffices to bound the tail of $X'$, which is the same integral except that $\wt Y$ in the last display is replaced by $\wt Y'$, an independent copy of $\wt Y$. Further observe that
the number $N_t$ of atoms $(\tau,\eta,\zeta)$ that satisfy $\eta\in D$ and $(t-\tau)^{-d/2}\zeta>1$ is Poisson distributed with parameter
$$ L=4^d\int_0^t\int_{(0,\infty)} \bone_{\{(t-s)^{-d/2}z\geq1\}}\,\dd s\,\la(\dd 
z)=4^d\int_{(0,\infty)} (z^{\frac 2d}\wedge t)\,\la(\dd z)<\infty. $$
So conditionally on $N_t$, the corresponding atoms  $(\tau_i,\eta_i,\zeta_i)_{i=1,\dots,N_t}$ are independent and identically distributed with density  $L^{-1}\bone_{D}(y)\bone_{\{(t-s)^{-d/2}z>1\}}\, \dd s\,\dd y\,\la(\dd z)$. Therefore,
\begin{align}
	\P(X'>R)&=e^{-L}\sum_{N=1}^\infty \frac{L^N}{N!}\E'\Biggl[\P\Biggl(\sum_{i=1}^N (t-\tau_i)^{-\frac d2}\wt Y'(\tau_i,\eta_i) \zeta_i>R\mathrel{\bigg|} N_t=N\Biggr)\Biggr]\nonumber\\
	&\leq e^{-L}\sum_{N=1}^\infty \frac{L^N}{N!}N\E'\biggl[\P\biggl(  (t-\tau_1)^{-\frac d2}\wt Y'(\tau_1,\eta_1) \zeta_1>\tfrac RN\mathrel{\Big|} N_t=N\biggr)\biggr]\label{eq:aux}\\
	&\leq e^{-L}\sum_{N=1}^\infty \frac{L^{N-1}}{(N-1)!}\int_0^t\int_{D}\int_{(0,\infty)} \P'((t-s)^{-\frac d2}\wt Y'(s,y)z>\tfrac RN)\,\dd s\,\dd y\,\la(\dd z).\nonumber
\end{align} 
Choosing $p\in(1 \vee \frac 2d, 1+\frac2d)$ and recalling the notation $\E_<$ and 
$\E_\geq$ from the proof of Theorem~\ref{thm:sol-ht}, we can use Markov's inequality and 
\cite[Lemma~8.1]{Berger21b} to obtain
 \begin{align*}
\P'((t-s)^{-\frac d2}\wt Y'(s,y)z>\tfrac RN)&=\P((t-s)^{-\frac d2}\wt Y(s,y)z>\tfrac RN)\\
&\leq \E_\geq\Big[R^{-p}\E_<[\wt Y(s,y)^p]N^pz^p(t-s)^{-\frac d2 p} \wedge 1\Bigr]\\
&\leq  \E_\geq\Big[CR^{-p}Y''(s,y)^pN^pz^p(t-s)^{-\frac d2 p} \wedge 1\Bigr],
\end{align*} 
where $Y''$ is the solution to \eqref{eq:PAM} driven by $\beta\La_\geq$ and $\beta=\beta(p,T)>0$ is a constant.

Since $\E[X\wedge1]= \int_0^1 \P(X>u) \,\dd u$,   
\[
\P'((t-s)^{-\frac d2}\wt Y'(s,y)z>\tfrac RN)
\leq  C \int_0^1 \P(Y''(s,y)   > \tfrac{Ru^{1/p}}{Nz}(t-s)^{\frac d2}) \,\dd u  .
\]
Now observe that $Y''$ is a series of multiple stochastic integrals with respect to the positive measure $\beta\La_\geq$. Together with the stationarity of $\beta\La_\geq$, it follows that $Y''(s,y)$ is stochastically dominated by $Y''(t,0)$. Thus, replacing $Y''(s,y)$ by $Y''(t,0)$ only increases the probabilities in the last display. Making this modification and 
inserting the resulting bound back into \eqref{eq:aux}, we can change variables  $s\mapsto r=(t-s)^{d/2}Ru^{1/p}/(Nz)$ to obtain
\beq\label{eq:Xprime}\begin{split}
	\P(X'>R)&\leq   C\mu_{2/d}(\la)R^{-\frac 2d}\Biggl(\sum_{N=1}^\infty \frac{L^{N-1}N^{2/d}}{(N-1)!}\Biggr) \int_0^1 u^{-\frac 2{dp}}\, \dd u \int_0^\infty \P(Y''(t,0)>r)   r^{\frac 2d-1}  \,\dd r  \\
	&\leq C\mu_{2/d}(\la)R^{-\frac 2d}. 
\end{split} \raisetag{-2.5\baselineskip}\eeq
Note that the $\dd u$-integral is finite  since $p>\frac2d$ and that the last integral is just a multiple of $\E[Y''(t,0)^{2/d}]$ and hence also   finite because $\mu_{2/d}(\la)$ is. 

Next, we consider $Y_2$. Because $x\in(0,1)^d$ and $y\notin (-2,2)^d$, the distance 
$\lvert x-y\rvert$ is bounded from below by $1$. Therefore, $g(t-s,x-y)\leq 
Ce^{-C^{-1}\lvert y\rvert^2}$ for some $C\in(0,\infty)$. Since this removes the 
singularity of $g$ around $0$ as well as the dependence on $x$, it is easy to show that 
$\sup_{x\in Q} Y_2(t,x)$ has a uniformly bounded moment of order $\frac 2d$  on $[0,t]$. Thus, the 
tail of $\sup_{x\in Q} Y_2(t,x)$ is lighter and hence negligible in \eqref{eq:sup-lt}.

The term $Y_4$ is only present if $d=1$. In this case, we use the bound
\begin{multline*} \E\biggl[\sup_{x\in Q} \lvert Y_4(t,x)\rvert^2\biggr]^{\frac 12} \leq 
C\int_0^t\int_{\R}\int_{(0,\infty)} (t-s)^{-\frac 12}e^{-C^{-1}\lvert y\rvert^2} z \\
\times
\lvert\bone_{\{s^{-1/2}z< 1\}}-\bone_{\{z<1\}}\rvert\E[\lvert \wt 
Y(s,y)\rvert^2]^{\frac12}\,\dd s\,\dd y\,\la(\dd z).
\end{multline*}
Evaluating the difference of indicator functions in the last line we  bound this by
\begin{align*}& C 
\int_0^t\int_{(0,\infty)} s^{-\frac 12}z\bone_{\{1\leq z<s^{1/2}\}}\,\dd s\,\la(\dd 
z)+C\int_0^t\int_{(0,\infty)} s^{-\frac 12}z\bone_{\{s^{1/2}\leq z<1\}}\,\dd s\,\la(\dd 
z)\\
	&\qquad\leq C
   \int_1^{t\vee1} s^{-\frac 12}\int_{[1,s^{1/2})}z\,\la(\dd z)\,\dd s +C \int_{(0,1)} z\int_0^t s^{-\frac12} \bone_{\{s<z^{2}\}}\,\dd s\,\la(\dd z)\\
&\qquad\leq C(M_1(\la)+m_2(\la)),
\end{align*}
which is finite. Thus, $\sup_{x\in Q} \lvert Y_4(t,x)\rvert$ does not contribute to the tail in \eqref{eq:sup-lt}, either.

For the last remaining term  $Y_3$, if $d=1$ use  \cite[Thm.~1]{MR} (with $\alpha = p 
= 2$) and Minkowski's integral inequality to obtain
 \begin{multline*}
\E[\lvert Y_3(t,x)-Y_3(t,x')\rvert^2]\leq 
C  \int_0^t\int_{\R}\int_{(0,\infty)} \lvert g(t-s,x-y)-g(t-s,x'-y)\rvert^2  \\
 \times\E[\wt Y(s,y)^2] z^2\bone_{\{(t-s)^{-1/2}z<1\}}\,\dd s\,\dd y\,\la(\dd z)
\end{multline*} 
for all $x,x'\in\R$. Since $\E[\wt Y(s,y)^2]$ is uniformly bounded on $[0,t]\times\R$, it 
can be absorbed into the constant $C$. Observe that 
$(t-s)^{-1/2}z<1$ implies 
\begin{align*}\lvert g(t-s,x-y)-g(t-s,x'-y)\rvert^2 z^2&
\leq C(t-s)^{-d}z^2\Bigl\lvert e^{-\frac{\lvert x-y\rvert^2}{2 (t-s)}} - 
e^{-\frac{\lvert x'-y\rvert^2}{2 (t-s)}} \Bigr\rvert^2\\
&\leq C(t-s)^{-\frac d2 q}z^q\Bigl\lvert e^{-\frac{\lvert x-y\rvert^2}{2 (t-s)}} - 
e^{-\frac{\lvert x'-y\rvert^2}{2 (t-s)}} \Bigr\rvert^q\\
&=C\lvert g(t-s,x-y)-g(t-s,x'-y)\rvert^qz^q,
 \end{align*}
where $q < 2$ is the exponent from Condition~\ref{cond:sup}. With 
this estimate and again  \cite[Lemme~A2]{SLB98}, we conclude that
$$ \E[\lvert Y_3(t,x)-Y_3(t,x')\rvert^2]\leq 
C\Bigl( \lvert x-x'\rvert^{3-q} \Bigr).$$
Since $3-q >1$,  it follows from \cite[Thm.~4.3]{Khos09} that
$$ \E\biggl[\sup_{x\in Q} Y_3(t,x)^2\biggr]\leq \E[Y_3(t,0)^2] + 
\E\biggl[\sup_{x,x'\in 
Q}\lvert Y_3(t,x)-Y_3(t,x')\rvert^2\biggr]<\infty,  $$
which shows that $\P(\sup_{x\in Q} Y_3(t,x)>R) =o(R^{-2})$.

If $d=2$, we simply bound
\begin{align*} 
	 \E\biggl[\sup_{x\in Q} Y_3(t,x)\biggr]&\leq C\int_0^t \int_{\R^d} \int_{(0,\infty)} (t-s)^{-1}e^{-C^{-1}\lvert y\rvert^2}z \bone_{\{(t-s)^{-1}z<1\}}\,\dd s\,\dd y\,\la(\dd z)\\
	 &\leq \int_{(0,t)} z \int_{z}^t s^{-1}\,\dd s\,\la(\dd z)\leq C\int_{(0,t^{d/2})} z(1+\lvert\log z\rvert) \,\la(\dd z),  
 \end{align*}
which shows $\P(\sup_{x\in Q} Y_3(t,x)>R)=o(R^{-1})$.

If $d\geq3$, we write $Y_3(t,x)=\sum_{i=0}^\infty Y_{3,i}(t,x)$ where 
\begin{align*}
Y_{3,0}(t,x)&=\int_0^t\int_{\R^d\setminus (-2,2)^d}\int_{(0,\infty)} 
g(t-s,x-y)z\bone_{\{(t-s)^{-d/2}z<1\}}\wt Y(s,y) \,\mu(\dd s,\dd y,\dd z),\\
Y_{3,i}(t,x)&=\int_0^t\int_{(-2,2)^d}\int_{(0,\infty)} 
g(t-s,x-y)z\bone_{\{(t-s)^{-d/2}z<1\}}\wt Y(s,y) \,\mu_i(\dd s,\dd y,\dd z),
\end{align*}
where the $\mu_i$'s are independent Poisson random measures (with intensities $\dd t\,\dd x\,\la_i(\dd z)$) such that $\mu=\sum_{i=1}^\infty \mu_i$ and such that  $4^d\int_0^t\int_{(0,\infty)} \bone_{\{(t-s)^{-d/2}z<1\}} \,\dd s\,\la_i(\dd z)\leq2$. Such a decomposition is indeed possible, see Lemma~\ref{lem:decom}. In the same way as we did for $Y_1$ and $Y_2$, one can now show that 
$$ \P\biggl( \sup_{x\in Q} Y_{3,0}(t,x) > R\biggr) =o(R^{-\frac 2d}),\qquad \P\biggl( \sup_{x\in Q} Y_{3,i}(t,x) > R\biggr) \leq C\mu_{2/d}(\la_i)R^{-\frac 2d}$$
for some $C>0$ that does not depend on $i$. Borrowing a truncation trick from the proof of \cite[Lemma~4.24]{Resnick08}, we now bound
\begin{align*}
&\P\biggl( \sup_{x\in Q} \sum_{i=1}^\infty Y_{3,i}(t,x) > R\biggr)\\&\qquad\leq   \sum_{i=1}^\infty \P\biggl( \sup_{x\in Q}  Y_{3,i}(t,x) > R\biggr) + \P\Biggl( \sum_{i=1}^\infty \sup_{x\in Q}  Y_{3,i}(t,x) \bone\biggl\{\sup_{x\in Q}  Y_{3,i}(t,x)\leq R\biggr\}> R\Biggr)\\
&\qquad\leq C\mu_{2/d}(\la)R^{-\frac 2d} + \frac1R\sum_{i=1}^\infty \E\Biggl[ \sup_{x\in Q}  Y_{3,i}(t,x) \bone\biggl\{\sup_{x\in Q}  Y_{3,i}(t,x)\leq R\biggr\}\Biggr].
\end{align*}
As $ \E[X\bone_{\{X\leq R\}}]\leq\E[ R\wedge X]= \int_0^R \P(X>u)\,\dd u$, it follows that
\begin{align*}
\P\biggl( \sup_{x\in Q} \sum_{i=1}^\infty Y_{3,i}(t,x) > R\biggr)&\leq C\mu_{2/d}(\la)R^{-\frac 2d} + \frac1R\sum_{i=1}^\infty \int_0^R \P\biggl( \sup_{x\in Q}  Y_{3,i}(t,x)>u \biggr)\,\dd u\\
&\leq C\mu_{2/d}(\la)\Biggl(R^{-\frac 2d} + \frac1R\int_0^R u^{-\frac 2d}\,\dd u\Biggr)\leq C\mu_{2/d}(\la)R^{-\frac 2d}.
\end{align*}

\noindent\bff{Step 2: Lower bound} 
\smallskip

\noindent  
Without loss of generality, we may assume that $M_0(\la)=\la([1,\infty))>0$ and that $Q=(0,1)^d$. By \eqref{eq:separate} and the positivity of $u_<(t_{i-1},x_{i-1};t_i,x_i)$, 
$$ \sup_{x\in Q} Y(t,x)\geq \sup_{x\in Q} \int_0^t\int_\R 
u_<(s,y;t,x)Y_<(s,y)\,\La_\geq(\dd s,\dd y) \geq  \sup_{x\in Q} u_<(\tau,\eta;t,x) 
Y_<(\tau,\eta)$$
on the event $A=\{\La_\geq([0,t]\times Q\times [1,\infty)) = 1\}$.
Since $\P(A)=tM_0(\la) e^{-tM_0(\la)}$, we have
\begin{align*} \P\biggl( \sup_{x\in Q} Y(t,x) > R\biggr)&\geq  
\mathtoolsset{multlined-width=0.7\displaywidth} \begin{multlined}[t]  
tM_0(\la) e^{-tM_0(\la)} \P\biggl(\sup_{x\in Q}  u_<(\tau,\eta;t,x)Y_<(\tau,\eta) > R,
 \\
Y_<(\tau,\eta)>\tfrac12,\	u_<(\tau,\eta;t,x)>\tfrac12 g(t-\tau,x-\eta)\mathrel{\Big|}A\biggr)\end{multlined}\\
&\geq  \mathtoolsset{multlined-width=0.7\displaywidth} \begin{multlined}[t]  
tM_0(\la) e^{-tM_0(\la)} \P\biggl(\sup_{x\in Q}  g_<(t-\tau,x-\eta) >4 R,\ 
Y_<(\tau,\eta)>\tfrac12, \\
	u_<(\tau,\eta;t,x)>\tfrac12 g(t-\tau,x-\eta)\mathrel{\Big|}A\biggr). \end{multlined}
\end{align*}
With a similar argument as in \eqref{eq:prob}, it follows that
\begin{align*} \P\biggl( \sup_{x\in Q} Y(t,x) > R\mathrel{\Big|}A\biggr)&\geq C\P\biggl(\sup_{x\in Q}  g(t-\tau,x-\eta) >4 R\mathrel{\Big|}A\biggr)\\&=C\P ((2\pi (t-\tau))^{-\frac d2}>4R\mid A).
\end{align*}
The lower bound in \eqref{eq:sup-lt} now follows from the fact that $\tau$ has a uniform distribution on $[0,t]$.
\epr

\section{The spatial peaks of the solution}\label{sec:peaks}

Armed with the  probability tail bounds from the previous section, we can now state and prove extensions of Theorems~\ref{thm:A}--\ref{thm:C} to general multiplicative Lévy noises. 
\bthm\label{thm:peak-R-lt} Fix $t>0$ and let $Y$ be the mild solution to \eqref{eq:PAM}. Assume Condition~\ref{cond:sup} and Condition~\ref{cond:lt} with $\al=\frac2d$. If $d=1$ and $m_1(\la)=\infty$, assume Condition~\ref{cond:lt} with some $\al>2$. Then the statement of Theorem~\ref{thm:A} remains true.
\ethm

If $d=1$ and $\La$ has infinite variation jumps, we need $\la$ to have a finite moment of order slightly bigger than $2$, in particular, in order to derive \eqref{eq:aux2} below. We strongly believe that it is not necessary for Theorem~\ref{thm:peak-R-lt} to hold.

\bpr[Proof of Theorem~\ref{thm:peak-R-lt}] Let us suppose that the integral converges. For $r>0$ and $0<r_1<r_2<\infty$, let $B_\infty(r)=\{x\in\R^d: \max_{i=1,\dots,d} \lvert x_i\rvert\leq r\}$ and $B_\infty(r_1,r_2)=B_\infty(r_2)\setminus B_\infty(r_1)$. Then, for any $K>0$, 
\begin{align}\nonumber
	\P\Biggl(\sup_{x\in B_\infty(n,n+1)} Y(t,x)>\frac{f(n)}{K}\Biggr)&\leq Cn^{d-1}\sup_{Q_0\in \calq, \, Q_0\subseteq B_\infty(n,n+1)} \P\Biggl(\sup_{x\in Q_0} Y(t,x)>\frac{f(n)}{K}\Biggr)\\
	&\leq CK^{-\frac 2d}n^{d-1}f(n)^{-\frac 2d},\label{eq:bound2} 
\end{align}
where the first step holds because the number of cubes from $\calq$  intersecting $B_\infty(n,n+1)$ is $O(n^{d-1})$ and the second step follows from Theorem~\ref{thm:sup-lt}. By the integral test for convergence, these probabilities are summable, so by the first Borel--Cantelli lemma, 
\[ \sup_{x\in B_\infty(n,n+1)} Y(t,x) \leq \frac{f(n)}{K}\]
for all but finitely many $n$, almost surely. This shows
\[ \limsup_{x\to\infty} \frac{\sup_{\lvert y\rvert\leq x} Y(t,y)}{f(x)}\leq K^{-1}\]
and hence the claim because $K>0$ was arbitrary.

For the converse, there is no loss of generality if we assume that $\la([1,\infty))>0$. If $d\geq2$, recall that $m_1(\la)<\infty$, so by \eqref{eq:Yhat}, $Y(t,x)=e^{-m_1(\la)t} \wh Y(t,x)\geq e^{-m_1(\la)t}   \int_0^t\int_{\R^d} g(t-s,x-y)  \,\La(\dd s,\dd y)$, which is a multiple of the solution to the heat equation with additive Lévy noise. Hence, the result follows from \cite{islands_add}. The same argument applies \ in $d=1$ if $m_1(\la)<\infty$.

If $d=1$ and $m_1(\la)=\infty$, the proof is more technical due to infinite variation jumps. We assume without loss of generality that $f$ is smooth.
With the same notation as in the upper bound proof of Theorem~\ref{thm:sup-lt}, we have 
\beq\label{eq:Ys} Y(t,x)\geq Y_0(t,x)+Y_3(t,x)+Y_4(t,x),\eeq
where
\beq\label{eq:Y0} Y_0(t,x)=\int_0^t\int_{x-1}^{x+1}\int_0^\infty g(t-s,x-y)z\bone_{\{(t-s)^{-1/2}z\geq1\}} Y(s,y)\,\mu(\dd s,\dd y,\dd z). \eeq
Next, let $\vp(x)=\sqrt{x+f(x)^2}$ and $h$ be an increasing function to be determined later and define 
$$ m=m(n)=h(n), \quad \beta=\beta(n)=h(n)^{1/4},\quad  R=R(n)=K {\vp(nh(n))}.$$
Since $x\vee f(x)^2\leq \vp(x)^2\leq 2( x\vee f(x)^2)$, the divergence of the integral in \eqref{eq:int2} implies
\beq\label{eq:phiint} \int_1^\infty \frac{1}{\vp(x)^2}\,\dd x=\infty\eeq
by \cite[Lemma~3.4 (2)]{CK2}.
We shall approximate $Y_0$ by
\beq\label{eq:Zmb} Z^{(m,\beta)}(t,x)= \int_0^t\int_{x-1}^{x+1}\int_0^\infty g(t-s,x-y)z\bone_{\{(t-s)^{-1/2}z\geq1\}} \lvert Y^{(m,\beta)}(s,y)\rvert \,\mu(\dd s,\dd y,\dd z),\eeq
where $Y^{(m,\beta)}$ is defined in Lemma~\ref{lem:tech}.
More precisely, writing
$I_n=(n h(n)-1,nh(n))$, we want to prove that
\beq\label{eq:BC}\begin{split}
\sum_{n=1}^\infty \P\Biggl(\sup_{x\in I_n} Z^{(m(n),\beta(n))}(t,x)>R(n)\Biggr) &=\infty,\\ 
\sum_{n=1}^\infty \P\Biggl(\sup_{x\in I_n} \lvert Y_0(t,x)-Z^{(m(n),\beta(n))}(t,x)\rvert>\tfrac14 R(n)\Biggr) &<\infty,\\
   \sum_{n=1}^\infty \P\Biggl(\sup_{x\in I_n} \lvert Y_3(t,x)+Y_4(t,x)\rvert>\tfrac14 R(n)\Biggr)&<\infty.  \end{split}\eeq
By the first Borel--Cantelli lemma, this implies that 
\beq\label{eq:BC1} \sup_{x\in I_n} \lvert Y_0(t,x)-Z^{(m(n),\beta(n))}(t,x)\rvert\leq \tfrac14 R(n),\qquad \sup_{x\in I_n} \lvert Y_3(t,x)+Y_4(t,x)\rvert\leq\tfrac14 R(n)\eeq
for all but finitely many $n$, almost surely. Moreover, by \eqref{eq:Zmb} and the definition of $Y^{(m,\beta)}$, the variables $\sup_{x\in I_n} Z^{(m(n),\beta(n))}(t,x)$ are measurable with respect to the $\si$-field generated by the restriction of $\La$ on $[0,t]\times (nh(n)-2-2\beta(n)(tm(n))^{1/2},nh(n)+1+2\beta(n)(tm(n))^{1/2})$, because only atoms of $\La$ that are within a distance of $\sum_{i=1}^m \lvert \Delta y_i\rvert\leq m^{1/2}(\sum_{i=1}^m \lvert \Delta y_i\rvert^2)^{1/2}\leq \beta(sm)^{1/2}$ from $x$ contribute to $Y^{(m,\beta)}(t,x)$ and because the same holds true for $Y^{(m,\beta)}_<(t,x)$ and $u_<^{(m,\beta)}(s,y;t,x)$. Since $\beta(n)(tm(n))^{1/2} =o(h(n))$, the considered variables are independent for different $n$. By the second Borel--Cantelli lemma, \eqref{eq:BC} also implies that
$$ \sup_{x\in I_n} Z^{(m(n),\beta(n))}(t,x)>R(n)$$
for infinitely many $n$, almost surely. Combining this with \eqref{eq:BC1}, it follows that
$$ \limsup_{n\to\infty} \frac{\sup_{\lvert x\rvert \leq nh(n)} Y(t,x)}{f(nh(n))}\geq \limsup_{n\to\infty} \frac{\sup_{\lvert x\rvert \leq nh(n)} Y(t,x)}{\vp(nh(n))} \geq \frac{K}{2}$$
almost surely. As $K$ is arbitrary, the claim follows.

To prove \eqref{eq:BC}, we start with the first statement and notice that
\begin{multline*} \P\Biggl(\sup_{x\in I_n} Z^{(m(n),\beta(n))}(t,x) >R(n)\Biggr)\\\geq \P\Biggl(\bigvee_{i=1}^N (2\pi(t-\tau_i))^{-\frac12}\lvert Y^{(m(n),\beta(n))}(\tau_i,\eta_i)\rvert\zeta_i>R(n)\Biggr),  \end{multline*}
where $(\tau_i,\eta_i,\zeta_i)_{i=1}^N$ are atoms of $\La_1$, where $\La_1$ is the 
restriction of $\La$ to the set $\{(s,y,z)\in(0, t)\times(x-1,x+1)\times(0,\infty): 
(t-s)^{-1/2}z>1\}$ (with the convention $\bigvee_{i=1}^0 a_i =0$). By 
\cite[Lemma~4.1]{Kallenberg17},
\begin{multline*} \P\Biggl(\sup_{x\in I_n} Z^{(m(n),\beta(n))}(t,x) >R(n)\Biggr)\\\geq \frac12\,\P\otimes\P'\Biggl(\bigvee_{i=1}^N (2\pi(t-\tau_i))^{-\frac12}\lvert   Y^{\prime (m(n),\beta(n))}(\tau_i,\eta_i)\rvert\zeta_i>R(n)\Biggr),  \end{multline*}
where $Y^{\prime(m,\beta)}$ is a copy of $Y^{(m,\beta)}$ that is independent of $\La$ (and defined on an auxiliary probability space $(\Om',\calf',\P')$). Hence, the right-hand side of the previous display is bounded from below by
$$ \frac12\,\P(N\geq1)\P\otimes\P'\Biggl ( (2\pi(t-\tau_1))^{-\frac12} \zeta_1>2R(n),\ \lvert   Y^{\prime (m(n),\beta(n))}(\tau_1,\eta_1)\rvert >\frac12\Biggr).  $$
As in \eqref{eq:prob}, we have $\P'(\lvert   Y^{\prime (m,\beta)}(t,x)\rvert >\frac12)>C$ locally uniformly in $t$ and $x$ (and uniformly in $m$ and in $\beta$ outside a neighborhood of   $0$). Therefore,
$$ \P\Biggl(\sup_{x\in I_n} Z^{(m(n),\beta(n))}(t,x) >R(n)\Biggr)\geq \frac12 C \P((2\pi(t-\tau_1))^{-\frac12} \zeta_1>2R(n))\geq C R(n)^{-  2},  $$
where the last step follows by a quick computation. Thus, the first relation in \eqref{eq:BC} is satisfied if 
\beq\label{eq:sum1} \sum_{n=1}^\infty \frac{1}{\vp(nh(n))^2}=\infty \iff  \int_1^\infty \frac{1}{\vp(xh(x))^2}\,\dd x=\infty. \eeq

Let us move on to the second relation in \eqref{eq:BC}. By Lemma~\ref{lem:Y0Z},
\beq\label{eq:aux2} \P\Biggl(\sup_{x\in I_n} \lvert Y_0(t,x)-Z^{(m(n),\beta(n))}(t,x)\rvert>\tfrac14 R(n)\Biggr)\leq 16\,C_{m(n),\beta(n)}R(n)^{-2},\eeq
where $C_{m,\beta}$ is the constant from the lemma. With our choices of $m$ and $\beta$, we have $C_{m(n),\beta(n)}=O(e^{-C^{-1}h(n)^{1/4}})$, so the second line in \eqref{eq:BC} is implied by
\beq\label{eq:sum2} \sum_{n=1}^\infty \frac{e^{-C^{-1}h(n)^{1/4}}}{\vp(nh(n))^2}<\infty \iff \int_1^\infty  \frac{e^{-C^{-1}h(x)^{1/4}}}{\vp(xh(x))^2}\,\dd x<\infty. \eeq
Lastly, because $\sup_{x\in I_n} \lvert   Y_3(t,x)+Y_4(t,x)\rvert$ has uniformly (in $n$) bounded moments of order $\al>2$,  we have   that the last relation in \eqref{eq:BC} is implied by
$$ \sum_{n=1}^\infty \frac{1}{\vp(nh(n))^\al}<\infty \iff \int_1^\infty  \frac{1}{\vp(xh(x))^\al}\,\dd x<\infty. $$
And this is true, because $\vp(xh(x))\geq \vp(x)\geq \sqrt{x}$. Consequently, in order to complete the proof, it remains to choose $h$ such that \eqref{eq:sum1} and \eqref{eq:sum2} are satisfied.

To this end, we will restrict our choice of $h$ to the class of increasing smoothly varying functions of index $0$ (see \cite[Ch.~1.8]{BGT}). In this case, if we change the variable $x$ to $y=g(x)=xh(x)$, then there exist $y_0>0$ and a smoothly varying function $h^\#$ with index $0$ (the de Bruijn conjugate of $h$) such that $g^{-1}(y)=yh^\#(y)$ for all $y>y_0$ (see \cite[Thm.~1.8.9]{BGT}). Moreover, for $y>y_0$, we have $(g^{-1})'(y)=h^\#(y)+y(h^\#)'(y)$ and $y(h^\#)'(y) = o(h^\#(y))$ (see \cite[p.\ 44]{BGT}). Therefore the conditions in \eqref{eq:sum1} and \eqref{eq:sum2} are equivalent to having both
$$ \int_{y_0}^\infty \frac{h^\#(y)}{\vp(y)^2}\,\dd y=\infty\qquad \text{and}\qquad \int_{y_0}^\infty \frac{e^{-C^{-1}h^\#(y)^{-1/4}}h^\#(y)}{\vp(y)^2}\,\dd y<\infty.$$
Note that $h^\#$ is decreasing (as $h$ is increasing). Thus, the previous line is implied by
\beq\label{eq:sum3} \int_{y_0}^\infty \frac{h^\#(y)}{\vp(y)^2}\,\dd y=\infty\qquad \text{and}\qquad \int_{y_0}^\infty \frac{h^\#(y)^2}{\vp(y)^2}\,\dd y<\infty.\eeq
To achieve this, we  choose  $h$ as the de Bruijn conjugate of 
$$ h^\#(y)=\Biggl(\int_{\frac12}^y  \vp(u)^{-2}\,\dd u\Biggr)^{-1},\qquad y>1. $$
Indeed, $h^\#$ is smoothly varying with index $0$ as it has the Karamata representation 
$ h^\#(y)= c \exp(-\int_{1}^y \eps(t)\,\frac{\dd t}{t})$,
where $$ \lim_{t\to\infty} \eps(t)=  \lim_{t\to\infty} \frac{t}{\vp(t)^2\int_{1}^t  
\vp(u)^{-2}\,\dd u}=  \lim_{t\to\infty} \frac{1}{[1+(f(t)^2)']\int_{1}^t  
\vp(u)^{-2}\,\dd u+1}=0$$
by \eqref{eq:phiint}. Finally, $h^\#$ satisfies  \eqref{eq:sum3} by \eqref{eq:phiint} and the Abel--Dini theorem \cite[p.\ 290]{Knopp90}.
\epr


\bthm\label{thm:peak-R-ht} Fix $t>0$ and let $Y$ be the mild solution to \eqref{eq:PAM}. If Condition~\ref{cond:ht} for some $\al\in(0,\frac2d]$ and Condition~\ref{cond:sup} are satisfied, then  Theorem~\ref{thm:B} (ii) remains true.
\ethm
\bpr Define (with the convention $\inf\emptyset = \infty$)
\beq\label{eq:L}  L_0 =  \inf\biggl\{L\in(0,\infty) \ : \ \limsup_{R\to\infty}R^{\al} e^{-L(\log R)^{1/(1+\theta_\al)}} \P\biggl(\sup_{x\in(0,1)^d} Y(t,x)>R\biggr)<\infty  \biggr\} \!\!\!\!
  \eeq
and $L^\ast=\al^{-1}(\frac{d}{\al})^{1/(1+\theta_\al)}L_0$. By Theorem~\ref{thm:sup-ht}, we have $L_0<\infty$. For any $L>L^\ast$ and any fixed $\eps>0$ (to be chosen later), combining the upper bound in Theorem~\ref{thm:sup-ht} with a similar argument to \eqref{eq:bound2} shows that 
\begin{align*}
	&\P\Biggl(\sup_{x\in B_\infty(n,n+1)} Y(t,x)>\frac{n^{d/\al} e^{L(\log n)^{1/(1+\theta_\al)}}}{K}\Biggr)\\
	&\qquad\leq Cn^{-1} \exp(-L\al(\log n)^{1/(1+\theta_\al)}+(L_0+\eps)[\log (K^{-1}n^{d/\al} e^{L(\log n)^{1/(1+\theta_\al)}})]^{1/(1+\theta_\al)})\\
	&\qquad\leq Cn^{-1}\exp(-[L\al-(L_0+\eps)(\tfrac d\al+\eps)^{1/(1+\theta_\al)}](\log n)^{1/(1+\theta_\al)})
\end{align*}
for sufficiently large $n$ (note that $C$ in the previous display may depend on $\eps$, $L$ and $K$ but not on $n$). By our assumption on $L$, if $\eps$ is small enough, we  have $L\al-(L_0+\eps)(\tfrac d\al+\eps)^{1/(1+\theta_\al)}>0$. Therefore, the probabilities in the previous display are summable, so by the Borel--Cantelli lemma,
$$ \limsup_{x\to\infty}\frac{\sup_{\lvert y\rvert\leq x} Y(t,y)}{x^{d/\al} e^{L(\log x)^{1/(1+\theta_\al)}}}\leq \frac{1}{K} $$
almost surely. Therefore, \eqref{eq:ub1}   follows by letting $K\to\infty$. Equation \eqref{eq:lb1} is a direct consequence of Theorem~\ref{thm:peak-Z} (i) (with $L_\ast=M_\ast$), which we state and prove next.
\epr

\bthm\label{thm:peak-Z} Fix $t>0$ and let $Y$ be the mild solution to \eqref{eq:PAM}.
\benu
\item[(i)] If Condition~\ref{cond:ht} holds for some $\al\in(0,1+\frac2d)$,  then the statement of Theorem~\ref{thm:C} (ii) remains valid.
\item[(ii)] If Condition~\ref{cond:ht} or  \ref{cond:lt} holds with   $\al=1+\frac2d$, then the statement of Theorem~\ref{thm:C} (iii) remains valid.
\eenu
\ethm

\bpr We assume without loss of generality that $\la([1,\infty))>0$. Let $C_0$ be the number from \eqref{eq:C0} in the case of (i) and $C_0=1$ in the case of (ii). Furthermore, recall the definition of $Y^{(N)}(t,x)$ from Lemma~\ref{lem:tech}, which satisfies $Y(t,x)\geq Y^{(N)}(t,x)$.
 If $\la$ satisfies the conditions of part (i),  define (with the convention $\sup \emptyset = 0$)
\beq\label{eq:M1}\begin{split} 
		M_1 &=  \sup\biggl\{M\in(0,\infty) \ : \   \liminf_{R\to\infty} R^{\al} e^{-M(\log R)^{1/(1+\theta_\al)}}\P(Y^{(\lfloor C_0\log R\rfloor) )}(t,0)>R)>0 \biggr\},\\
		M_2 &=  \inf\biggl\{M\in(0,\infty) \ : \ \limsup_{R\to\infty} R^{\al} e^{-M(\log R)^{1/(1+\theta_\al)}}\P(Y(t,0)>R)<\infty \biggr\}
\end{split} \eeq
and let $M_\ast = \al^{-1}(\frac{d}{\al})^{1/(1+\theta_\al)}M_1$ and $M^\ast=\al^{-1}(\frac{d}{\al})^{1/(1+\theta_\al)}M_2$; if  $\la$ satisfies the conditions of part (ii), define
\beq\label{eq:M2}\begin{split} 
	M_1 &=  \sup\biggl\{M\in(0,\infty) \ : \ \liminf_{R\to\infty} R^{1+\frac2d} e^{-M\frac{(\log R)(\log\log\log R)}{\log \log R}}\P(Y^{(  C_0\log R  )}(t,0)>R)>0 \biggr\},\\
	M_2 &=  \inf\biggl\{M\in(0,\infty) \ : \ \limsup_{R\to\infty} R^{1+\frac2d} e^{-M\frac{(\log R)(\log\log\log R)}{\log \log R}} \P(Y(t,0)>R)<\infty \biggr \}
\end{split} \eeq
and let $M_\ast=\frac{d^3}{(d+2)^2} M_1$ and $M^\ast= \frac{d^3}{(d+2)^2} M_2$  instead. Both \eqref{eq:ub2} and \eqref{eq:ub3} can be shown similarly to \eqref{eq:ub1}, so we leave the details to the reader. Also, the proofs of \eqref{eq:lb2} and \eqref{eq:lb3} are similar, so we only give the details for the former and assume   Condition~\ref{cond:ht} for some $\al\in(0,1+\frac2d)$.

In the lower bound proof of Theorem~\ref{thm:sol-ht}, we have seen that $M_1>0$. Let $N(R)=C_0\log R$, $m(R)=\log R$ and $\beta(R)=(\log R)^{2}$. Then, by Lemma~\ref{lem:tech}, we also have
\beq\label{eq:liminf}  \liminf_{R\to\infty} R^{\al} e^{-M_0(\log R)^{1/(1+\theta_\al)}}\P(Y^{(N(R),m(R),\beta(R))}(t,x)>R)>0 \eeq
for all $0<M_0<M_1$ and $x\in\R^d$. Similarly to what we observed in the proof of 
Theorem~\ref{thm:peak-R-lt}, the variable $Y^{(N(R),m(R),\beta(R))}(t,x)$ is measurable 
with respect to the $\sigma$-algebra generated by
$\La$ restricted to a ball of radius $(tN(R))^{1/2}+\beta(R)(tm(R))^{1/2}=O((\log 
R)^{5/2})$ around $x$. Therefore, if we let $f(n)=n^{d/\al}e^{M(\log 
n)^{1/(1+\theta_\al)}}$ (with $M<M_\ast$) and $R=R(n)=Kf(n\log^3 n)$ for $n,K\in\N$ and 
distribute $k(n)= c n^{d-1}$ many points, $c > 0$, from $\Z^d$ to the annulus 
$B_\infty(n\log^3 n-1,n\log^3 n)$ such that these points, say, 
$x^{(n)}_1,\dots,x^{(n)}_{k(n)}$ are at least $\log^3 n$ apart from each other, then all 
but finitely many of the variables
	$$ \Bigl\{Y^{(n)}_i= Y^{(N(R(n)),m(R(n)),\beta(R(n)))}(t,x^{(n)}_i)\ : \ i=1,\dots,k(n),\ n\in\N\Bigr\}$$
are independent of each other. Moreover, by \eqref{eq:liminf}, for any $\eps>0$  there is $C>0$ such that 
\begin{align*} &\sum_{n=1}^\infty\sum_{i=1}^{k(n)} \P( Y^{(n)}_i>R(n)) \geq \frac{C}{K^\al}\sum_{n=1}^\infty n^{d-1} R(n)^{-\al}e^{(M_1-\eps)(\log R(n))^{1/(1+\theta_\al)}}\\
	&\qquad\geq \sum_{n=1}^\infty n^{-1}(\log n)^{-3d}e^{-M\al(\log (n\log ^3 n))^{1/(1+\theta_\al)}}e^{(M_1-\eps)(\log R(n))^{1/(1+\theta_\al)}}.
\end{align*}
As $M<M_\ast$, if $\eps$ is small enough, the last series is infinite, so by the second Borel--Cantelli lemma, $Y^{(n)}_i>R(n)$ for infinitely many $n$ and $i$. At the same time, by Lemma~\ref{lem:tech}, our choice of $\beta=\beta(R(n))$ and $m=m(R(n))$ and the first Borel--Cantelli lemma, the events $\{\lvert Y^{(N(R(n)))}(t,x^{(n)}_i)- Y^{(N(R(n)),m(R(n)),\beta(R(n)))}(t,x^{(n)}_i)\rvert > \frac12 R(n)\}$ only occur finitely many times, which implies
$$ \limsup_{n\to\infty} \frac{\sup_{x\in\Z^d,\lvert y\rvert_\infty \leq n\log^3 n } Y(t,x)}{f(n\log^3 n)}\geq \frac K2  $$
almost surely. Because $K\in\N$ was arbitrary, this implies \eqref{eq:ub2}. 
\epr

\section{Macroscopic dimension of peaks}\label{sec:dim}

As another application of the tail estimates of Section~\ref{sec:tail}, we determine the \emph{macroscopic Hausdorff and Minkowski dimensions}  of the peaks of the solution to \eqref{eq:SHE}, both in the case of additive and multiplicative noise. In the case where $\dot\La$ is a Gaussian noise in dimension $1$, a similar program has been carried out by \cite{Khoshnevisan17}; see also \cite{Khoshnevisan18}. Let us briefly review the relevant definitions, first introduced by \cite{Barlow89, Barlow92} for subsets of $\Z^d$ and extended to subsets of $\R^d$ by \cite{Khoshnevisan17,Khoshnevisan18}  and \cite{Khoshnevisan17b}.  Writing $\calq_1=\{Q(x,r)=x+(0,r)^d : x\in \R^d, r\geq 1\}$ for the collection of cubes with side length $\side(Q(x,r))=r\geq1$, we define, for $E\subseteq \R^d$, $\rho>0$ and $n\in\N$, 
\[ \nu^n_\rho(E)=\inf\Biggl\{\sum_{i=1}^m \biggl(\frac{\side(Q_i)}{e^n}\biggr)^\rho: Q_1,\dots,Q_m\in \calq_1, E\cap\cals_n \subseteq \bigcup_{i=1}^m Q_i\Biggr\}, \]
where $\cals_n=B_\infty(e^{n-1},e^n)$. Letting $\log_+(x)=\log(x\vee e)$, we define  
\beq\label{eq:Dim} \begin{split} \Dim_\HH(E)&=\inf\Biggl\{ \rho>0: \sum_{n=1}^\infty \nu^n_\rho(E)<\infty\Biggr\}, \\
\Dim_\MM(E)&=\limsup_{n\to\infty}\frac1n \log_+\bigl\lvert\{q\in \Z^d\cap\cals_n: E\cap Q(q,1)\neq \emptyset\}\bigr\rvert. 
\end{split}\eeq 

\subsection{The multifractal nature of peaks}

We shall determine the macroscopic dimensions
of the largest  peaks observed on $\R^d$ and $\Z^d$.    Recall the convention that $Y_+$ denotes the solution to \eqref{eq:SHE} with $\si(x)=1$, while $Y$, as before, is the solution   with $\si(x)=x$. For  $\ga\in[0,\infty)$, we consider
\begin{equation}\label{eq:e-gamma}\begin{split}
		\cale^{+, \mathrm{c}}_\ga &= \{x\in\R^d: Y_+(t,x)\geq \lvert x\rvert^\ga\},\qquad 	\cale^{\times, \mathrm{c}}_\ga = \{x\in\R^d: Y(t,x)\geq \lvert x\rvert^\ga\},\\
		\cale^{+, \mathrm{d}}_\ga &= \{x\in\Z^d: Y_+(t,x)\geq \lvert 
x\rvert^\ga\}, \qquad 		\cale^{\times, \mathrm{d}}_\ga = \{x\in\Z^d: Y(t,x)\geq 
\lvert x\rvert^\ga\}.
	\end{split}
\end{equation}

\bthm\label{thm:dim}
Let 
$\ga\in[0,\infty)$. In the following,  $\Dim_{\HH|\MM}$ means one can take  $\Dim_{\HH}$ 
or $\Dim_{\MM}$ in the statement. Also, $\Dim_{\HH|\MM}(A)<0$  means that $A$ is a bounded 
set.
\benu
\item[(i)] Assume Condition~\ref{cond:sup} and Condition~\ref{cond:lt} with $\al=\frac2d$. If $d=1$ and $m_1(\la)=\infty$, assume Condition~\ref{cond:lt} with some $\al>2$. Then  almost surely,
\beq\label{eq:dim1}   \Dim_{\HH|\MM}(\cale^{\times,\mathrm{c}}_\ga) = \Dim_{\HH|\MM}(\cale^{+,\mathrm{c}}_\ga)= d-\tfrac 2d \ga.\eeq
\item[(ii)]
If Condition \ref{cond:sup} and Condition  \ref{cond:ht} hold with $\al \in(0,\frac 2d]$, then almost surely,
\beq\label{eq:dim1b} \Dim_{\HH|\MM}(\cale^{\times,\mathrm{c}}_\ga) = \Dim_{\HH|\MM}(\cale^{+,\mathrm{c}}_\ga) =d-\al\ga.
\eeq
\item[(iii)] If Condition \ref{cond:lt} holds with $\al =1+\frac 2d$, then almost surely,
\beq\label{eq:dim1c} \Dim_{\HH|\MM}(\cale^{\times,\mathrm{d}}_\ga) =  \Dim_{\HH|\MM}(\cale^{+,\mathrm{d}}_\ga) =d-(1+\tfrac 2d) \ga.\eeq
\item[(iv)]
If Condition  \ref{cond:ht} holds with $\al \in(0,1+\frac 2d]$, then almost surely,
\beq\label{eq:dim1d}\Dim_{\HH|\MM}(\cale^{\times,\mathrm{d}}_\ga) =  \Dim_{\HH|\MM}(\cale^{+,\mathrm{d}}_\ga) =d-\al\ga. \eeq
\eenu
\ethm
\bpr  The statements when the right-hand sides of \eqref{eq:dim1}--\eqref{eq:dim1d} are 
negative follow from Theorems~\ref{thm:peak-R-lt} and \ref{thm:peak-Z} in the case of 
multiplicative noise and from \cite[Theorems 6 and 7]{islands_add} 
in the case of additive noise. In the 
following, we only give the full details for the proof of 
$\Dim_\MM(\cale^{\times,\mathrm{c}}_\ga)\leq d-\frac2d \ga$ (Step 1) and the proof of 
$\Dim_\HH(\cale^{\times,\mathrm{c}}_\ga)\geq d-\frac2d \ga$ (Step 2), both under 
Condition~\ref{cond:sup} and Condition~\ref{cond:lt} with $\al=\frac 2d$ and the 
assumption $\ga \leq d^2/2$.  For both parts, the proofs are inspired by ideas from  
\cite{Khoshnevisan17}. By \cite[Lemma 3.1]{Barlow92}, Steps 1 and 2 imply 
$\Dim_\HH(\cale^{\times,\mathrm{c}}_\ga)=\Dim_\MM(\cale^{\times,\mathrm{c}}_\ga)=d-\frac 
2d \ga$. We explain towards the end of the proof (Step 3) why all other equalities in 
\eqref{eq:dim1}--\eqref{eq:dim1d} can be shown analogously.

\bigskip
\noindent\bff{Step 1: $\Dim_\MM(\cale^{\times,\mathrm{c}}_\ga)\leq d-\frac2d \ga$} 

\smallskip
\noindent 
Clearly,
\begin{align*}
\E\Bigl[\bigl\lvert\{q\in\Z^d\cap \cals_n: \cale^{\times,\mathrm{c}}_\ga\cap Q(q,1)\neq\emptyset\}\bigr\rvert\Bigr] &= \sum_{q\in\Z^d\cap \cals_n} \P(\cale^{\times,\mathrm{c}}_\ga\cap Q(q,1)\neq\emptyset)\\
&\leq \sum_{q\in\Z^d\cap \cals_n} \P\Biggl( \sup_{x\in Q(q,1)} Y(t,x) >(\lvert q\rvert-1)^\ga\Biggr).
\end{align*}
Since $\lvert\Z^d\cap\cals_n\rvert\leq Ce^{nd}$ for some $C>0$ and $\lvert q\rvert\geq e^{n-1}$ for all $q\in\Z^d\cap\cals_n$, Theorem~\ref{thm:sup-lt} implies
\beq\label{eq:upper}
\E\Bigl[\bigl\lvert\{q\in\Z^d\cap \cals_n: \cale^{\times,\mathrm{c}}_\ga\cap Q(q,1)\neq\emptyset\}\bigr\rvert\Bigr] 
\leq Ce^{nd}(e^{n-1}-1)^{-\frac 2d\ga} \leq Ce^{n(d-\frac 2d \ga)}
\eeq
for all $n\geq2$. By Markov's inequality, $  \P (\bigl\lvert\{q\in\Z^d\cap \cals_n: \cale^{\times,\mathrm{c}}_\ga\cap Q(q,1)\neq\emptyset\}\bigr\rvert > e^{\theta n} )$ is summable 
for all $\theta>d-\frac 2d \ga$. According to the first Borel--Cantelli lemma, for all $\theta$ in this range,
\[\limsup_{n\to\infty} \frac1n \log_+(|\{q\in\Z^d\cap \cals_n: \cale^{\times,\mathrm{c}}_\ga\cap Q(q,1)\neq\emptyset\}|)\leq \theta\]
almost surely. The upper bound on $\Dim_\MM(\cale^{\times,\mathrm{c}}_\ga)$ follows by letting $\theta\downarrow d-\frac2d \ga$.

\bigskip
\noindent\bff{Step 2: $\Dim_\HH(\cale^{\times,\mathrm{c}}_\ga)\geq d-\frac2d \ga$} 

\smallskip
\noindent We can assume that $d-\frac 2d \ga>0$. As in the proof of Theorem~\ref{thm:peak-R-lt}, the case $d\geq2$ is easier because the jumps are summable. So starting with $d\geq2$, we choose $\theta\in(2\ga/d^2,1)$ and consider the grid 
\beq\label{eq:xnk}\{x^n_k:k=1,\dots,K^n\}=\{e^{n-1}+ie^{\theta n}: i\in\N,\ 1\leq i\leq e^{n(1-\theta)}-e^{n(1-\theta)-1}\}^d\eeq
 in $\cals_n$ and, within each of the cubes $Q(x^n_k,e^{\theta n})$, the subgrid $$\{z^n_{k,\ell}:\ell=1,\dots,L^n_k\}=x^n_k+\{j\in\N:1\leq j\leq e^{\theta n}\}^d.$$
For every $k=1,\dots,K^n$ and $\ell=1,\dots, L^n_k$, we introduce the random fields
\beq\label{eq:Ynkl} Y_{k,\ell}^{n}(t,x)=\int_0^t \int_{Q(z^n_{k,\ell},1)}g(t-s,x-y)\,\La(\dd s,\dd y), \qquad (t,x)\in(0,\infty)\times\R^d,\eeq
which are independent for different values of $k$ and $\ell$  and satisfy $Y^n_{k,\ell} (t,x)\leq \wh Y(t,x)=e^{m_1(\la)t}Y(t,x)$. Therefore, for all $n\in N$ and $k=1,\dots,K^n$, 
\begin{align*}
\P\Biggl( \sup_{x\in Q(x^n_k,e^{\theta n})} \frac{Y(t,x)}{|x|^\ga} < 1\Biggr)&\leq \P\Biggl(\max_{\ell=1,\dots,L^n_k} \sup_{x\in Q(z^n_{k,\ell},1)} Y(t,x)<e^{n\ga}\Biggr)\\
 &\leq  \P\Biggl(\max_{\ell=1,\dots,L^n_k} \sup_{x\in Q(z^n_{k,\ell},1)} Y^n_{k,\ell}(t,x)<e^{m_1(\la)t+n\ga}\Biggr)\\
  &= \prod_{\ell=1}^{L^n_k} \P\Biggl(\sup_{x\in Q(z^n_{k,\ell},1)} Y^n_{k,\ell}(t,x)<e^{m_1(\la)t+n\ga}\Biggr).
\end{align*}

By Theorem~\ref{thm:sup-lt}, the last probability is less than or equal to $1-Ce^{-2n\ga/d}$. Applying the bound $1-x\leq e^{-x}$ and noticing that $\frac12 e^{\theta nd}\leq L^n_k\leq e^{\theta nd}$ by construction, we have
 \beq\label{eq:help5}
 \P\Biggl( \sup_{x\in Q(x^n_k,e^{\theta n})} \frac{Y(t,x)}{|x|^\ga} < 1\Biggr) \leq  \exp\bigl(-CL^n_k e^{-\frac 2d\ga n}\bigr)\leq  \exp\bigl(-\tfrac12 Ce^{(\theta d-\frac2d\ga )n}\bigr).
 \eeq
Since $K^n\leq e^{(1-\theta)nd}$ and $\theta d-\frac 2d \ga>0$ by our choice of $\theta$, we conclude that
 \[
 \sum_{n=1}^\infty \sum_{k=1}^{K^n}\P\Biggl( \sup_{x\in Q(x^n_k,e^{\theta n})} \frac{Y(t,x)}{|x|^\ga} < 1\Biggr) \leq  \sum_{n=1}^\infty \exp\bigl((1-\theta)nd-\tfrac12 Ce^{(\theta d-\frac2d\ga )n}\bigr) <\infty.
 \]
So the Borel--Cantelli lemma implies that the following holds with probability one: except for finitely many $n$, the intersection $Q(x^n_k,e^{\theta n})\cap \cale^{\times,\mathrm{c}}_\ga$ is nonempty for all $k=1,\dots,K^n$. In other words, the set $\cale^{\times,\mathrm{c}}_\ga$ is almost surely \emph{$\theta$-thick} in the sense of \cite[Def.~4.3]{Khoshnevisan17}. Thus, $\Dim_\HH(\cale^{\times,\mathrm{c}}_\ga)\geq d(1-\theta)$ almost surely by \cite[Prop.~4.4]{Khoshnevisan17} and the lower bound follows by letting $\theta\downarrow 2\ga/d^2$.

For $d=1$,  recall the processes $Y_0$, $Y_3$, $Y_4$ and $Z^{(m,\beta)}$ from \eqref{eq:Ys} and \eqref{eq:Zmb}. This time, we let $m(n)=n$, $\beta(n)=n^2$ and consider  the subgrid
$ \{\wh z^n_{k,\ell} = x^n_k + \ell n^3 : \ell\in\N,\ 1\leq \ell\leq \wh L^n_k=\lfloor n^{-3}e^{\theta n}\rfloor\} $.
Any two points in $\{\wh z^{n}_{k,\ell}: k=1,\dots,K^n, \ell=1,\dots, \wh L^n_k\}$ are at least $Cn^3$ apart from each other, where $C$ is a positive number. Therefore, for large $n$ and $k=1,\dots, K^n$, the   variables $$\Biggl\{\sup_{x\in(\wh z^n_{k,\ell}-1,\wh z^n_{k,\ell})}Z^{(m(n),\beta(n))}(t,x) \ : \  \ell=1,\dots, \wh L^n_k\Biggr\}$$ are independent of each other; cf.\ the paragraph after \eqref{eq:BC1}. Thus, for any $\theta>2 \ga$,
\begin{multline*} \sum_{n=1}^\infty \sum_{k=1}^{K^n}\P\Biggl( \max_{\ell=1,\dots, \wh L^n_k} \sup_{x\in(\wh z^n_{k,\ell}-1,\wh z^n_{k,\ell})} \frac{Z^{(m(n),\beta(n))}(t,x)}{|x|^\ga} < 3\Biggr)\\ \leq  \sum_{n=1}^\infty \exp\bigl((1-\theta)n-\tfrac12 Cn^{-3}e^{(\theta -2\ga )n}\bigr)<\infty.\end{multline*}
At the same time, by Lemma~\ref{lem:Y0Z},
\begin{multline*}
\sum_{n=1}^\infty \sum_{k=1}^{K^n}\sum_{\ell=1}^{\wh L^n_k} \P\Biggl(  \sup_{x\in(\wh z^n_{k,\ell}-1,\wh z^n_{k,\ell})} \frac{\lvert Y_0(t,x)- Z^{(m(n),\beta(n))}(t,x)\rvert}{|x|^\ga} >1\Biggr)\\
\leq \sum_{n=1}^\infty e^{(1-\theta)n}e^{\theta n}n^{-3}  (e^{-n^2} + C^n n^{-\frac{n}{12}}) e^{-2\ga n}  n\ga <\infty,
\end{multline*}
which shows that  $\cale^{\times,\mathrm{c},0}_\ga=\{x\in \R^d:Y_0(t,x)\geq 2\lvert x\rvert^\ga\}$ is $\theta$-thick, whence $\Dim_\HH(\cale^{\times,\mathrm{c},0}_\ga)\geq 1-2\ga$.

In addition, combining Step 1 with how we estimated $Y_3$ and $Y_4$ in the proof of Theorem~\ref{thm:peak-R-lt}, we have that   $\cale^{\times,\mathrm{c},34}_\ga =\{x\in \R^d:\lvert Y_3(t,x)+Y_4(t,x)\rvert\geq \lvert x\rvert^\ga\}$ satisfies $\Dim_\HH(\cale^{\times,\mathrm{c},34}_\ga)\leq \Dim_\MM(\cale^{\times,\mathrm{c},34}_\ga)\leq 1-\al\ga < 1-2\ga$. Since $\cale^{\times,\mathrm{c}}_\ga\supseteq \cale^{\times,\mathrm{c},0}_\ga \setminus \cale^{\times,\mathrm{c},34}_\ga$ and $\Dim_\HH(A)$  remains unchanged when a set of lower dimension is removed (see \cite[Property~(viii), p.\ 128]{Barlow92}), we conclude that $\Dim_\HH(\cale^{\times,\mathrm{c}}_\ga)\geq 1-2\ga$. 

\bigskip
\noindent\bff{Step 3: The remaining equalities} 

\smallskip
\noindent  Steps 1 and   2 show that 
$\Dim_\HH(\cale^{\times,\mathrm{c}}_\ga)=\Dim_\MM(\cale^{\times,\mathrm{c}}_\ga)=d-\frac 
2d \ga$ under Condition~\ref{cond:sup} and Condition~\ref{cond:lt} with $\al=\frac 2d$. 
With the same methods, all remaining equalities in \eqref{eq:dim1}--\eqref{eq:dim1d} can 
be deduced from the tail estimates in Theorems~\ref{thm:sol-ht}, \ref{thm:sol-lt} and 
\ref{thm:sup-ht} in the case of multiplicative noise and from the analogous results  
\cite[Theorems 2 and 5]{islands_add} in the case of additive noise. Note that the slowly 
varying functions in the tail estimates in Theorems~\ref{thm:sol-ht}, \ref{thm:sol-lt} 
and \ref{thm:sup-ht} are negligible on the scale of sets $\cale^{(\cdot)}_\ga$. This is 
why the macroscopic dimensions of $\cale^{(\cdot)}_\ga$ are the same for both additive 
and multiplicative noise.
\epr

 \subsection{Self-similarity of intermittency islands (or lack thereof)}

While $\cale_{d^2/2}$  is almost surely unbounded by Theorem~\ref{thm:peak-R-lt},  both 
its Minkowski and Hausdorff dimensions  are zero as the previous theorem asserts. Loosely 
speaking, the peaks that contribute to $\cale_{d^2/2}$ are too rare under the standard 
scale to have a positive macroscopic fractal dimension. However, under additive noise or 
under a multiplicative noise that is not too heavy-tailed, we can show that  after 
appropriate  changes of scale, these peaks will again exhibit  a multifractal structure 
that is reminiscent of the peaks studied so far. In fact, as we shall show, there exist 
infinitely many layers of peaks which, despite being defined on different scales, all 
share the same multifractal behavior. In these cases, we conclude  that the spatial peaks 
form \emph{large-scale self-similar multifractals}.
\bthm\label{thm:dim2} Let $\lvert \cdot\rvert$ be a norm on $\R^d$ and $x^q=\frac{x}{|x|}|x|^q$ for $x\neq0$ and $q>0$. Furthermore, for $n\in\N$, let $\log^{(n)}(r)=\log_+(\log^{(n-1)}(r))$ for $r\in\R$ (with $\log^{(0)}(r)=r$) and define $\log^{(n)}(x)=\frac{x}{|x|}\log^{(n)}(\lvert x\rvert)$ for $x\in\R^d\setminus\{0\}$.
\benu
\item[(i)] Assume Condition~\ref{cond:sup} and Condition~\ref{cond:lt} with $\al=\frac2d$. If $d=1$ and $m_1(\la)=\infty$, assume Condition~\ref{cond:lt} with some $\al>2$.
 For $N\in\N$ and $\ga>0$, consider  
\beq\label{eq:eNgamma} \begin{split} \cale^{(\times, \mathrm{c},N)}_{\ga} &= \Biggl\{ x\in\R^d:Y(t,x)\geq \lvert x \rvert^{\frac{d^2}2} \Biggl(\prod_{p=1}^{N-1} \lvert\log^{(p)}( x)\rvert^{\frac d2}\Biggr)\lvert\log^{(N)}(x)\rvert^{\frac \ga d} \Biggr\},\\
	\cale^{(+, \mathrm{c},N)}_{\ga} &= \Biggl\{ x\in\R^d:Y_+(t,x)\geq \lvert x\rvert^{\frac {d^2}2} \Biggl(\prod_{p=1}^{N-1} \lvert\log^{(p)}( x)\rvert^{\frac d2}\Biggr)\lvert\log^{(N)}(x)\rvert^{\frac\ga d} \Biggr\}.\end{split} \eeq
Then almost surely,
\beq\label{eq:dim2} \Dim_{\HH|\MM}\Bigl(\bigl(\log^{(N)}(\cale^{(\times,\mathrm{c},N)}_\ga)\bigr)^{\frac 1d}\Bigr)=\Dim_{\HH|\MM}\Bigl(\bigl(\log^{(N)}(\cale^{(+,\mathrm{c},N)}_\ga)\bigr)^{\frac 1d}\Bigr)= d-\tfrac 2d \ga. \eeq
\item[(ii)] Assume Condition~\ref{cond:sup} and Condition~\ref{cond:ht}  with $\al\in(0,\frac 2d)$.
For $N\in\N$ and $\ga>0$, let 
\beq\label{eq:eNgamma-b} 
	\cale^{(+, \mathrm{c},N)}_{\ga} = \Biggl\{ x\in\R^d:Y_+(t,x)\geq \lvert x\rvert^{\frac {d}\al} \Biggl(\prod_{p=1}^{N-1} \lvert\log^{(p)}( x)\rvert^{\frac 1\al}\Biggr)\lvert\log^{(N)}(x)\rvert^{\frac\ga d} \Biggr\}. \eeq
Then almost surely,
\beq\label{eq:dim2b} \Dim_{\HH|\MM}\Bigl(\bigl(\log^{(N)}(\cale^{(+, \mathrm{c},N)}_\ga)\bigr)^{\frac 1d}\Bigr) = d-\al \ga. \eeq
\item[(iii)] Assume   Condition~\ref{cond:ht}  with $\al\in(0,1+\frac 2d)$ or Condition~\ref{cond:lt} with $\al=1+\frac2d$. In both cases, consider 
for $N\in\N$ and $\ga>0$   the sets
\beq\label{eq:eNgamma-c} 
\cale^{(+, \mathrm{d},N)}_{\ga} = \Biggl\{ x\in\Z^d:Y_+(t,x)\geq \lvert x\rvert^{\frac {d}\al} \Biggl(\prod_{p=1}^{N-1} \lvert\log^{(p)}( x)\rvert^{\frac 1\al}\Biggr)\lvert\log^{(N)}(x)\rvert^{\frac\ga d} \Biggr\}. \eeq
Then almost surely,
\beq\label{eq:dim2c} \Dim_{\HH|\MM}\Bigl(\bigl(\log^{(N)}(\cale^{(+, \mathrm{d},N)}_\ga)\bigr)^{\frac 1d}\Bigr)= d-\al \ga. \eeq
\eenu
\ethm


\bpr[Proof of Theorem~\ref{thm:dim2}] The proofs of \eqref{eq:dim2}, \eqref{eq:dim2b} and \eqref{eq:dim2c} are completely analogous. We therefore only show the part in \eqref{eq:dim2} concerning $\cale^{(\times,\mathrm{c},N)}_\ga$.
We begin with a technicality: the result in \eqref{eq:dim2} does not depend on the choice 
of the norm $\lvert\cdot\rvert$. Indeed, let  $\lVert\cdot\rVert$ be another norm on 
$\R^d$ and write $\log^{(N)}_{\lvert\cdot\rvert}$ and $\log^{(N)}_{\lVert \cdot\rVert}$ 
and, similarly, $\cale^{(\times,\mathrm{c},N)}_{\ga,\lvert\cdot\rvert}$ and 
$\cale^{(\times,\mathrm{c},N)}_{\ga,\lVert \cdot \rVert}$ as well as 
$(\cdot)^r=(\cdot)^r_{\lvert\cdot\rvert}$ and $(\cdot)=(\cdot)_{\lVert\cdot\rVert}$ to 
emphasize the dependence on the chosen norm. For $K>0$, further let 
$\cale^{(\times,\mathrm{c},N)}_{\ga,\lvert\cdot\rvert}(K)$ be  the right-hand side of the 
first line of \eqref{eq:eNgamma} when $Y(t,x)$ is replaced by $Y(t,x)/K$, and define 
$\cale^{(\times,\mathrm{c},N)}_{\ga,\lVert\cdot\rVert}(K)$ analogously. By the equivalence 
of norms on $\R^d$, there is $C\geq1$ such that 
$\cale^{(\times,\mathrm{c},N)}_{\ga,\lvert\cdot\rvert}(CK)\subseteq 
\cale^{(\times,\mathrm{c},N)}_{\ga,\lVert\cdot\rVert}(K)\subseteq 
\cale^{(\times,\mathrm{c},N)}_{\ga,\lvert\cdot\rvert}(C^{-1}K)$ for all $K>0$. Thus,
\[ \bigl(\log^{(N)}_{\lvert\cdot\rvert}(\cale^{(\times,\mathrm{c},N)}_{\ga,\lvert\cdot\rvert}(CK))\bigr)_{\lvert\cdot\rvert}^{\frac 1d} \subseteq\bigl(\log^{(N)}_{\lvert\cdot\rvert}(\cale^{(\times,\mathrm{c},N)}_{\ga,\lVert\cdot\rVert}(K))\bigr)_{\lvert\cdot\rvert}^{\frac 1d} = f\Bigl(\bigl(\log^{(N)}_{\lVert\cdot\rVert}(\cale^{(\times,\mathrm{c},N)}_{\ga,\lVert\cdot\rVert}(K))\bigr)_{\lVert\cdot\rVert}^{\frac 1d}\Bigr)\]
with the function $f$ from \eqref{eq:f}. This function is bounded and Lipschitz continuous outside a ball containing the origin according to Lemma~\ref{lem:Lip}. Since the macroscopic Hausdorff and Minkowski dimensions are monotone and insensitive to adding or deleting bounded subsets,   $$\Dim_{\HH|\MM}((\log^{(N)}_{\lvert\cdot\rvert}(\cale^{(\times,\mathrm{c},N)}_{\ga,\lvert\cdot\rvert}(CK))_{\lvert\cdot\rvert}^{\frac 1d}) \leq \Dim_{\HH|\MM}((\log^{(N)}_{\lVert\cdot\rVert}(\cale^{(\times,\mathrm{c},N)}_{\ga,\lVert\cdot\rVert}(K)))_{\lVert\cdot\rVert}^{\frac 1d})$$ by Lemma~\ref{lem:Lipschitz}. By symmetry, this inequality also holds if we  switch the role of the two norms. So the part in \eqref{eq:dim2} concerning $\cale^{(\times,\mathrm{c},N)}_\ga$ is proved if  we show that  
\begin{align*} \Dim_{\HH|\MM}((\log^{(N)}_{\lVert\cdot\rVert}(\cale^{(\times,\mathrm{c},N)}_{\ga,\lVert\cdot\rVert}(K)))_{\lVert\cdot\rVert}^{1/d})&\leq d-\tfrac 2d \ga,\\ \Dim_{\HH|\MM}((\log^{(N)}_{\lvert\cdot\rvert}(\cale^{(\times,\mathrm{c},N)}_{\ga,\lvert\cdot\rVert}(K)))_{\lvert\cdot\rvert}^{1/d})&\geq d-\tfrac 2d \ga
\end{align*} 
for every $K>0$.  
In order to simplify notation, we omit all subscripts $|\cdot|$ and $\|\cdot\|$ in the following and write $|\cdot|$ for \emph{both} norms, with the agreement that $|\cdot|$ is the supremum norm in Step 1 and the Euclidean norm in Step 2 below.

Next, let $\exp^{(n)}(x)=\frac{x}{|x|}\exp^{(n)}(\lvert x\rvert)$ for $n\in\N$ and $x\in\R^d\setminus\{0\}$, where $\exp^{(n)}$ is the iterated exponential defined in Lemma~\ref{lem:Lip}. A moment's thought reveals that
\[ \Dim_{\HH|\MM}\bigl((\log^{(N)}(\cale^{(\times,\mathrm{c},N)}_{\ga}(K)))^{\frac 1d}\bigr) = \Dim_{\HH|\MM}\bigl(\ov\cale^{(N)}_{\ga}(K)\bigr),\]
where for $K>0$,
\[ \ov\cale^{(N)}_{\ga}(K)=\Biggl\{x\in\R^d: Y(t,\exp^{(N)}(x^d))>K \lvert\exp^{(N)}(x^d)\rvert^{\frac{d^2}2}\Biggl(\prod_{p=1}^{N-1} \lvert\exp^{(p)}(x^d)\rvert^{\frac d2}\Biggr)|x|^{\ga}\Biggr\}. \]

As the statement of the theorem for $\ga>d^2/2$ follows from Theorem~\ref{thm:peak-R-lt}, we may (and will) assume $d-\frac2d \ga \geq0$ in the following. By \cite[Lemma 3.1]{Barlow92}, it is enough to prove that $\Dim_\MM(\ov \cale^{(N)}_\ga(K))\leq d-\frac2d \ga$ and $\Dim_\HH(\ov\cale^{(N)}_\ga(K))\geq d-\frac2d \ga$ almost surely.

\bigskip
\noindent\bff{Step 1: Upper bound} 

\smallskip
\noindent 
Let $k(n)\in\N$ and $n-1=:r^{(n)}_0<\dots<r^{(n)}_{k(n)-1}<n+1\leq r^{(n)}_{k(n)}$ be defined via the relations
\[   \exp^{(N)}(\exp(dr^{(n)}_i))-\exp^{(N)}(\exp(dr^{(n)}_{i-1})) = 1,\qquad i =1,\dots,k(n),\]
and let $\cals_n(i)= B_\infty(\exp(r^{(n)}_{i-1}),\exp(r^{(n)}_{i}))$ for $i=1,\dots,k(n)$. We required $r^{(n)}_{k(n)}\geq n+1$ and not just $r^{(n)}_{k(n)}\geq n$ in order that $\bigcup_{q\in\Z^d\cap \cals_n} Q(q,1)\subseteq \bigcup_{i=1}^{k(n)}\cals_n(i)$. For every $n$ and $i$, the image of $\cals_n(i)$ under the mapping $\exp^{(N)}((\cdot)^d)$ can be covered with unit cubes $(Q^{n,i}_j: j=1,\dots,\ell_n(i))$, where 
\beq\label{eq:lni}\ell_n(i)\leq C[(\exp^{(N)}(\exp(dr^{(n)}_i)))^d-(\exp^{(N)}(\exp(dr^{(n)}_{i-1})))^d]\eeq
for some finite $C>0$   independent of $n$ and $i$. Denoting the pre-image of $Q^{n,i}_j$ 
under the same mapping by $\wt Q^{n,i}_j$ and assuming  $\wt Q^{n,i}_j\cap 
[-\exp(r^{(n)}_{i-1}),\exp(r^{(n)}_{i-1})]^d=\emptyset$ without loss of generality, we 
have 
\begin{align*}
&\bigl\lvert\{q\in\Z^d\cap \cals_n: \ov\cale^{(N)}_{\ga}\cap Q(q,1)\neq\emptyset\}\bigr\rvert\\
&\qquad \leq \mathtoolsset{multlined-width=0.9\displaywidth} \begin{multlined}[t]\sum_{i=1}^{k(n)} \sum_{j=1}^{\ell_n(i)} \bone\Biggl\{ \exists x\in \wt Q^{n,i}_j:  Y(t,\exp^{(N)}(x^d))> K\lvert\exp^{(N)}(x^d)\rvert^{\frac{d^2}2}\\
	\times\Biggl(\prod_{p=1}^{N-1} \lvert\exp^{(p)}(x^d)\rvert^{\frac d2}\Biggr)|x|^{\ga}\Biggr\}\end{multlined}\\
&\qquad \leq \mathtoolsset{multlined-width=0.9\displaywidth} \begin{multlined}[t]
\sum_{i=1}^{k(n)} \sum_{j=1}^{\ell_n(i)} \bone\Biggl\{ \exists x\in \wt Q^{n,i}_j:  Y(t,\exp^{(N)}(x^d))> K (\exp^{(N)}(\exp(dr^{(n)}_{i-1})))^{\frac {d^2}2}\\
\times\Biggl(\prod_{p=1}^{N-1} (\exp^{(p)}(\exp(dr^{(n)}_{i-1})))^{\frac d2}\Biggr)\exp(\ga r^{(n)}_{i-1})  \Biggr\}\end{multlined}\\
&\qquad=\mathtoolsset{multlined-width=0.9\displaywidth} \begin{multlined}[t]\sum_{i=1}^{k(n)} \sum_{j=1}^{\ell_n(i)} \bone\Biggl\{ \sup_{x\in Q^{n,i}_j} Y(t,x)  > K (\exp^{(N)}(\exp(dr^{(n)}_{i-1})))^{\frac{d^2}2}\\
\times\Biggl(\prod_{p=1}^{N-1} (\exp^{(p)}(\exp(dr^{(n)}_{i-1})))^{\frac d2}\Biggr)\exp(\ga r^{(n)}_{i-1})  \Biggr\}.\end{multlined}
\end{align*}

Taking expectation and using \eqref{eq:lni} and Theorem~\ref{thm:sup-lt}, we obtain
\begin{align*}
&\E\Bigl[\bigl\lvert\{q\in\Z^d\cap \cals_n: \ov\cale^{(N)}_{\ga}\cap Q(q,1)\neq\emptyset\}\bigr\rvert\Bigr] \\
&\qquad\leq C\sum_{i=1}^{k(n)} \bigl[\exp(d\exp^{(N)}(dr^{(n)}_i))-\exp(d\exp^{(N)}(dr^{(n)}_{i-1}))\bigr]  \exp(-d\exp^{(N)}(dr^{(n)}_{i-1}))\\
&\qquad\quad\times\Biggl(\prod_{p=1}^{N-1} \exp(-\exp^{(p)}(dr^{(n)}_{i-1}))\Biggr)\exp(-\tfrac2d\ga r^{(n)}_{i-1}).
\end{align*}
Applying the mean-value theorem to the difference in brackets and noticing that the derivative of $r\mapsto  \exp(d\exp^{(N)}(dr))$ increases in $r$, we further deduce that
\begin{align*}
&\E\Bigl[\bigl\lvert\{q\in\Z^d\cap \cals_n: \ov\cale^{(N)}_{\ga}\cap Q(q,1)\neq\emptyset\}\bigr\rvert\Bigr] \\
&\qquad\leq Cd^2\sum_{i=1}^{k(n)} (r^{(n)}_i-r^{(n)}_{i-1})  \exp(d\exp^{(N)}(dr^{(n)}_i)) \Biggl(\prod_{p=1}^{N}\exp^{(p)}(dr^{(n)}_i)\Biggr) \\
&\qquad\quad\times\exp(-d\exp^{(N)}(dr^{(n)}_{i-1}))\Biggl(\prod_{p=1}^{N-1} \exp(-\exp^{(p)}(dr^{(n)}_{i-1}))\Biggr)\exp(-\tfrac2d\ga r^{(n)}_{i-1}).
\end{align*} 

By construction, $\exp(\exp^{(N)}(dr^{(n)}_i) = 1+\exp(\exp^{(N)}(dr^{(n)}_{i-1}))\leq 
2\exp(\exp^{(N)}(dr^{(n)}_{i-1}))$. Taking logarithm consecutively on both sides, we also 
get $\exp^{(p)}(dr^{(n)}_i)\leq 2\exp^{(p)}(dr^{(n)}_{i-1})$ for $p=1,\dots,N$. Thus, we 
can simplify the estimate in the previous display to
\begin{align*}
\E\Bigl[\bigl\lvert\{q\in\Z^d\cap \cals_n: \ov\cale^{(N)}_{\ga}\cap Q(q,1)\neq\emptyset\}\bigr\rvert\Bigr] &\leq Cd^22^{N+1}\sum_{i=1}^{k(n)} (r^{(n)}_i-r^{(n)}_{i-1})      \exp((d-\tfrac2d\ga)r^{(n)}_{i-1})\\
&\leq Cd^22^{N+1}e^{(n+1)(d-\frac 2d \ga)}\sum_{i=1}^{k(n)} (r^{(n)}_i-r^{(n)}_{i-1})\\
&\leq Cd^22^{N+2}e^{(n+1)(d-\frac 2d \ga)}.
\end{align*} 
This estimate is analogous to the bound \eqref{eq:upper} in the proof of Theorem~\ref{thm:dim}, so the proof can be completed just like there.

\bigskip
\noindent\bff{Step 2: Lower bound} 

\smallskip
\noindent As in the proof of Theorem~\ref{thm:dim}, our strategy will be to show that $\ov\cale^{(N)}_\ga(K)$ is $\theta$-thick for all $\theta\in (2\ga/d^2,1)$, assuming $d-\frac 2d \ga>0$ without loss of generality. We   again consider the grid $\{x^n_k:k=1,\dots,K^n\}$ from \eqref{eq:xnk} and the associated cubes $Q(x^n_k,e^{\theta n})$. Unfortunately, if $d\geq2$, we do not have sufficient control over the shape of the images that we obtain from applying the mapping $\exp^{(N)}$ to these cubes. This is why in $d\geq2$, we will inscribe some auxiliary geometric solids that are easier to analyze in those cubes. For the remaining proof, we only consider the case $d\geq2$; the one-dimensional situation is geometrically much simpler and is therefore left to the reader (the potential existence of infinite variation jumps can be addressed as in the proof of Theorem~\ref{thm:dim}). 

For $d\geq2$, we consider  geometric shapes that we call \emph{(spherical) shell sectors}; see Figure~\ref{fig:1}. These are obtained by intersecting a shell (i.e., the set difference of two balls with the same center) with a cone that has this center as apex. Equivalently, a shell sector is the difference of two concentric sectors. (A sector results from cutting a ball into two parts by a hyperplane and taking the union of the smaller part with the cone formed by the intersection, an $(n-1)$-dimensional ball, as base and the center of the cut ball as apex; ``concentric'' here means that both sectors have the same apex and the same axis of revolution.)

A shell sector $S=S(A, O, \rho,s)$ (see Figure~\ref{fig:2} for illustration) is uniquely parametrized by four parameters: its \emph{apex} $A$ (i.e., the joint apex of the two sectors),  its \emph{suspension point} $O$ (i.e., the center of the base of the larger sector), its \emph{base radius} $\rho$ (i.e., the radius of the base of the larger sector), and its \emph{side length} $s$ (i.e., the difference of the radii of the two balls). Several other characteristics of $S$ will be important to us: its \emph{inner radius} $r$ and \emph{outer radius} $R$ (i.e., the radius of the smaller and the larger ball, respectively), its \emph{inner vertex} $v$ and \emph{outer vertex} $V$ (i.e., the point on the boundary of the smaller and larger ball, respectively, that is collinear with $A$ and $O$), its \emph{height} $h$ (i.e., the distance between the base center of the smaller sector and $V$),  its \emph{angle} $\phi$ (i.e., the largest possible angle between the half-lines $AO$ and $AP$, where $P$ is a boundary point of $S$), and its \emph{direction} $w=(O-A)/|O-A|$.

\begin{figure}[!tbh]
	\centering
\begin{minipage}[b]{0.45\textwidth}
	\begin{tikzpicture}[scale=0.8]
	\def\R{6};
	\def\r{3};
	\def\a{20}
	\def\eps{0.1}
	\draw [ultra thick, domain=90-\a:90+\a] plot ({\R*cos(\x)},{\R*sin(\x)});
	\draw [ultra thick, domain=90-\a:90+\a] plot ({\r*cos(\x)}, {\r*sin(\x)});
	\draw [dashed, domain=105+\a:90+\a] plot ({\R*cos(\x)},{\R*sin(\x)});
	\draw [dashed, domain=90-\a:75-\a] plot ({\R*cos(\x)},{\R*sin(\x)});
	\draw [dashed, domain=180:90+\a] plot ({\r*cos(\x)}, {\r*sin(\x)});
	\draw [dashed, domain=90-\a:0] plot ({\r*cos(\x)}, {\r*sin(\x)});
	\draw[-] (0,0) -- ({\r*cos(90+\a)},{\r*sin(90+\a)});
	\draw[-] (0,0) -- ({\r*cos(90-\a)},{\r*sin(90-\a)});
	\draw[ultra thick] ({\r*cos(90-\a)},{\r*sin(90-\a)}) -- ({\R*cos(90-\a)}, {\R*sin(90-\a)});
	\draw[ultra thick] ({\r*cos(90+\a)},{\r*sin(90+\a)}) -- ({\R*cos(90+\a)}, {\R*sin(90+\a)});
	\draw[dashed] ({\R*cos(90-\a)-5},{\R*sin(90-\a)}) -- ({\R*cos(90+\a)+5},{\R*sin(90+\a)});
	\draw[dashed] ({\r*cos(90-\a)-4},{\r*sin(90-\a)}) -- ({\r*cos(90+\a)+4},{\r*sin(90+\a)});
	
	\node  at (1.9,0.25){\footnotesize smaller ball};
	\node  at (3,4.5){\footnotesize larger ball};
	
	\node[label={[align=center]\footnotesize smaller \\ \footnotesize sector}]  at (0,1.5){};
	\node[label={[align=center]\footnotesize larger \\ \footnotesize sector}]  at (0,3.5){};
	\node[label={[align=center]\footnotesize hyperplane}]  at (-3,2.75){};
	\node[label={[align=center]\footnotesize hyperplane}]  at (-3,5.55){};
	
	\draw[-{Latex[length=7]}] ({-\R*sin(\a)-2},-1) -- ({-\R*sin(\a)-2},{7*cos(25)+1.5}) node[right] {$\mathbb{R}$};
	\draw[-{Latex[length=7]}] ({-\R*sin(\a)-2},-1) -- ({\R*sin(\a)+0.5},-1) node[above] {$\mathbb{R}^{d-1}$};
	\end{tikzpicture}
	\caption{The construction of a $d$-dimensional shell sector.}\label{fig:1} 
\end{minipage}
\hfill
\begin{minipage}[b]{0.45\textwidth}
	\begin{tikzpicture}[scale=0.8]
\def\R{7};
\def\r{3};
\def\a{25}
\def\eps{0.1}
\draw [ultra thick, domain=90-\a:90+\a] plot ({\R*cos(\x)},{\R*sin(\x)});
\draw [ultra thick, domain=90-\a:90+\a] plot ({\r*cos(\x)}, {\r*sin(\x)});
\draw [domain=90-\a:90] plot ({1.25*cos(\x)}, {1.25*sin(\x)});
\draw[dashed] (0,0) -- ({\r*cos(90+\a)},{\r*sin(90+\a)});
\draw[dashed] (0,0) -- ({\r*cos(90-\a)},{\r*sin(90-\a)});
\draw[dashed] ({\r*cos(90-\a)},{\r*sin(90-\a)}) -- ({\r*cos(90+\a)},{\r*sin(90+\a)});
\draw[ultra thick] ({\r*cos(90-\a)},{\r*sin(90-\a)}) -- ({\R*cos(90-\a)}, {\R*sin(90-\a)});
\draw[ultra thick] ({\r*cos(90+\a)},{\r*sin(90+\a)}) -- ({\R*cos(90+\a)}, {\R*sin(90+\a)});
\draw[-] ({\R*cos(90-\a)},{\R*sin(90-\a)}) -- ({\R*cos(90+\a)},{\R*sin(90+\a)});
\draw[dashed] (0,0) -- (0,\r);
\draw[-] (0,\r) -- (0,\R);
\draw[-{Latex[length=7]}] (0,0) -- (0,1.75) node[left] {$w$};

\node[label=below:$A$]  at (0,0)[circle,fill,inner sep=1.5pt, color=black]{};
\node[label=above left:$v$]  at (0,\r)[circle,fill,inner sep=1.5pt, color=black]{};
\node[label=above left:$V$]  at (0,\R)[circle,fill,inner sep=1.5pt, color=black]{};
\node[label=above left:$O$]  at (0,{\R*cos(\a)})[circle,fill,inner sep=1.5pt, color=black]{};
\node[label=above:$\phi$] at (0.2,0.4){};

\draw [decorate,decoration={brace,amplitude=5},xshift=-5,yshift=0]
(0,0) -- ({(\r-\eps)*cos(90+\a)},{(\r-\eps)*sin(90+\a)}) node [black,midway,xshift=-10,yshift=-5] 
{$r$};
\draw [decorate,decoration={brace,amplitude=5},xshift=-5,yshift=0]
({(\r)*cos(90+\a)},{(\r)*sin(90+\a)}) -- ({(\R-\eps)*cos(90+\a)},{(\R-\eps)*sin(90+\a)}) node [black,midway,xshift=-10,yshift=-5] 
{$s$};
\draw [decorate,decoration={brace,amplitude=5,mirror},xshift=5,yshift=0]
(0,0) -- ({(\R-\eps)*cos(90-\a)},{(\R-\eps)*sin(90-\a)}) node [black,midway,xshift=12,yshift=-2] 
{$R$};
\draw [decorate,decoration={brace,amplitude=5},xshift=0,yshift=-3.5]
(-\eps,{\R*sin(90-\a)}) -- ({-\R*cos(90-\a)+\eps},{\R*sin(90-\a)})  node [black,midway,yshift=-13] 
{$\rho$};
\draw [decorate,decoration={brace,amplitude=5,mirror},xshift=3.5,yshift=-0]
(0,{\r*sin(90-\a)+\eps/2}) -- (0,\R-\eps/2) node [black,midway,xshift=13] 
{$h$};

\draw[-{Latex[length=7]}] ({-\R*sin(\a)-1},-1) -- ({-\R*sin(\a)-1},{\R*cos(\a)+1.5}) node[right] {$\mathbb{R}$};
\draw[-{Latex[length=7]}] ({-\R*sin(\a)-1},-1) -- ({\R*sin(\a)+0.5},-1) node[above] {$\mathbb{R}^{d-1}$};
\end{tikzpicture}
\caption{Parametrization of a shell sector $S(A,O,\rho,s) = S[A, w, r, R, \phi]$.}\label{fig:2} 
\end{minipage}
\end{figure}
Simple geometric considerations yield the following relations:
\beq\label{eq:hphi} R=r+s,\quad \sin\phi=\frac{\rho}{R},\quad\tan \phi = \frac{\rho}{\rho_0},\quad h=s\cos \phi +\sqrt{\rho_0^2+\rho^2}-\rho_0, \eeq
where $\rho_0=\lvert O-A\rvert$. As a result, another way of parametrization is $S=S[A,w,r,R,\phi]$. 
Moreover, $S$ can be inscribed in a box with one side of length $h$ and all other sides of length $2\rho$. This box has diameter $\sqrt{h^2+4(d-1)\rho^2}$, which, in particular, implies 
\beq\label{eq:dist} \max_{P\in S}\dist(O,P) \leq \sqrt{h^2+4(d-1)\rho^2}. \eeq

Back to the cubes $Q(x^n_k,e^{\theta n})$, let $z^n_k$ be the center of these cubes and consider the shell sectors 
\beq\label{eq:Snk} S^n_k=S(0,z^n_k,  \tfrac1{4\sqrt{d}} e^{\theta n},\tfrac1{4\sqrt{d}} e^{\theta n}).\eeq
By \eqref{eq:hphi} and the elementary inequality $\sqrt{x+y}-\sqrt{x}\leq \sqrt{y}$, the height of $S^n_k$ is bounded by $\tfrac1{2\sqrt{d}} e^{\theta n}$. Together with \eqref{eq:dist}, it follows that
$\dist (z^n_k, P)\leq \frac12e^{\theta n}$ for any point $P$ in $S^n_k$. The important conclusion is that 
\beq\label{eq:subset} S^n_k\subseteq Q(x^n_k,e^{\theta n}).\eeq
 For later reference, let us also give an estimate on $\phi^n_k$, the angle of $S^n_k$: since $e^{n-1}\leq \lvert z^n_k \rvert\leq \sqrt{d}e^n$ and $\frac12 x\leq\arctan x \leq x$ for small $x>0$, \eqref{eq:hphi} implies
\beq\label{eq:phi} \frac1{8d}e^{(\theta -1)n}\leq\arctan \frac{e^{\theta n}}{4de^n} \leq \phi^n_k\leq \arctan \frac{e^{\theta n}}{4\sqrt{d}e^{n-1}}\leq \frac{e}{4\sqrt{d}}e^{(\theta-1)n} \eeq
for large $n$.

The reason why we have introduced the shell sectors $S^n_k$ at all is that $\ov S^n_k=\exp^{(N)}((S^n_k)^d)$ are again shell sectors. In fact,
\[ \ov S^n_k=S[0,w^n_k,\exp^{(N)}((r^n_k)^d),\exp^{(N)}((R^n_k)^d),\phi^n_k],  \]
where $w^n_k=z^n_k/|z^n_k|$ is the direction and $r^n_k$ and $R^n_k$ are the inner and outer radius of $S^n_k$, respectively. Given $n$ and $k$, we now define $u^{n,k}_0<\dots<u^{n,k}_{\ell^n_k}$ by setting $u^{n,k}_0=r^n_k$ and requiring $\ell^n_k$ be the maximal number such that
\beq\label{eq:unkl}  \exp^{(N)}((u^{n,k}_\ell)^d)-\exp^{(N)}((u^{n,k}_{\ell-1})^d)=1 \eeq
for all $\ell=1,\dots,\ell^n_k$ and $u^{n,k}_{\ell^n_k}\leq R^n_k$. By construction and 
the first identity in \eqref{eq:hphi}, 
\beq\label{eq:unkl-2}\begin{split} e^{n-1} \leq r^n_k= u^{n,k}_0<\dots<u^{n,k}_{\ell^n_k}&\leq R^n_k\leq \sqrt{d}e^n,\\
	\frac{1}{8\sqrt{d}}e^{\theta n}\leq  \frac12(R^n_k-r^n_k) \leq u^{n,k}_{\ell^n_k}-u^{n,k}_0 &\leq R^n_k-r^n_k = \frac{1}{4\sqrt{d}}e^{\theta n}.\end{split} \eeq

Next, given $n$, $k$ and $\ell$, consider 
\beq\label{eq:ovSnk} \ov S^{n,k}_\ell=S[0,w^n_k,\exp^{(N)}((u^{n,k}_{\ell-1})^d), \exp^{(N)}((u^{n,k}_{\ell})^d),\phi^n_k].\eeq 
By a simple calculation (cf.\ \cite[Sect.\ V]{Jacquelin03}), there is a constant $C_d>0$ that only depends on $d$ such that, with obvious notation,
\begin{align*} \Leb(\ov S^{n,k}_\ell) &= C_d\bigl((R^{n,k}_\ell)^d -(r^{n,k}_\ell)^d\bigr)\int_0^{\phi^n_k}(\sin t)^{d-2} \,\dd t\\ &\geq \frac{C_d}{2^{d-2}(d-1)}\bigl((R^{n,k}_\ell)^d -(r^{n,k}_\ell)^d\bigr)(\phi^n_k)^{d-1}\end{align*}
for large $n$. As a consequence of the Vitali covering theorem (see \cite[Thm.~5.5.2]{Bogachev07}), there are $\eps>0$ and pairwise disjoint cubes $Q^{n,k}_{\ell,1},\dots Q^{n,k}_{\ell,m^{n,k}_\ell}\subseteq \ov S^{n,k}_\ell$ with side length within $(\eps,1]$ such that 
\beq\label{eq:mnkl} m^{n,k}_\ell \geq \sum_{m=1}^{m^{n,k}_\ell} \Leb(Q^{n,k}_{\ell,m}) \geq \frac{C_d}{2^{d-1}(d-1)}\bigl((R^{n,k}_\ell)^d -(r^{n,k}_\ell)^d\bigr)(\phi^n_k)^{d-1}. \eeq

We are now ready for the final (probabilistic) part of the proof.  Whenever $n$ is sufficiently large, we deduce from \eqref{eq:subset} and \eqref{eq:ovSnk} that for all $k=1,\dots,K^n$, 
\begin{align*}
&\P\Biggl( \sup_{x\in Q(x^n_k,e^{\theta n})} \frac{Y(t,\exp^{(N)}(x^d))}{ \lvert\exp^{(N)}(x^d)\rvert^{\frac{d^2}2}(\prod_{p=1}^{N-1} \lvert\exp^{(p)}(x^d)\rvert^{\frac d2})|x|^{\ga}} \leq K\Biggr)\\
&\qquad\leq \P\Biggl( \sup_{x\in S^n_k} \frac{Y(t,\exp^{(N)}(x^d))}{ \lvert\exp^{(N)}(x^d)\rvert^{\frac{d^2}2}(\prod_{p=1}^{N-1} \lvert\exp^{(p)}(x^d)\rvert^{\frac d2})|x|^{\ga}} \leq K\Biggr)\\
&\qquad\leq\mathtoolsset{multlined-width=0.9\displaywidth} \begin{multlined}[t]\P\Biggl(\bigcap_{\ell=1}^{\ell^n_k} \bigcap_{m=1}^{m^{n,k}_\ell} \Biggl\{\sup_{x\in Q^{n,k}_{\ell,m}} Y(t,x)\leq K (\exp^{(N)}((u^{n,k}_\ell)^d))^{\frac{d^2}2}\\
	\times\Biggl(\prod_{p=1}^{N-1} (\exp^{(p)}((u^{n,k}_\ell)^d))^{\frac d2}\Biggr)(u^{n,k}_\ell)^{\ga}\Biggr\}\Biggr).\end{multlined}
\end{align*}
Let $Y^{n,k}_{\ell,m}$ be defined in the same way as $Y^{n}_{k,\ell}$ in \eqref{eq:Ynkl} but with $Q(z^n_{k,\ell},1)$ replaced by $Q^{n,k}_{\ell,m}$. By construction, the latter are mutually disjoint for different values of $\ell$ and $m$. Therefore, $\{Y^{n,k}_{\ell,m}:\ell=1,\dots, \ell^n_k, m=1,\dots, m^{n,k}_\ell\}$ is a family of independent random fields. In addition, they clearly satisfy $Y^{n,k}_{\ell,m}(t,x)\leq \wh Y(t,x)=e^{m_1(\la)t}Y(t,x)$, so
 \begin{align*}
 &\P\Biggl( \sup_{x\in Q(x^n_k,e^{\theta n})} \frac{Y(t,\exp^{(N)}(x^d))}{ \lvert\exp^{(N)}(x^d)\rvert^{\frac{d^2}2}(\prod_{p=1}^{N-1} \lvert\exp^{(p)}(x^d)\rvert^{\frac d2})|x|^{\ga}} \leq K\Biggr)\\
 &\qquad\leq\mathtoolsset{multlined-width=0.9\displaywidth} \begin{multlined}[t]\prod_{\ell=1}^{\ell^n_k} \prod_{m=1}^{m^{n,k}_\ell} \P\Biggl( \sup_{x\in Q^{n,k}_{\ell,m}} Y^{n,k}_{\ell,m}(t,x)\leq Ke^{-m_1(\la)t} (\exp^{(N)}((u^{n,k}_\ell)^d))^{\frac{d^2}{2}}\\
 	\times\Biggl(\prod_{p=1}^{N-1} \exp^{(p)}((u^{n,k}_\ell)^d)^{\frac d2}\Biggr)(u^{n,k}_\ell)^{\ga} \Biggr)\end{multlined}\\
 &\qquad\leq \exp\Biggl(-C\sum_{\ell=1}^{\ell^n_k} m^{n,k}_\ell(\exp^{(N)}((u^{n,k}_\ell)^d))^{-d} \Biggl(\prod_{p=1}^{N-1} \exp^{(p)}((u^{n,k}_\ell)^d)^{-1}\Biggr) (u^{n,k}_\ell)^{-\frac 2d \ga}\Biggr),
 \end{align*}
 where we used \eqref{eq:sup-lt}   and the estimate $1-x\leq e^{-x}$ for the last step. One detail is worth mentioning: The bounds in \eqref{eq:sup-lt} were proved for  cubes $Q$   of side length $1$. The reader may easily verify that the same bound holds uniformly for all cubes of side length larger than $\eps$, except that the values of the limit inferior and superior in \eqref{eq:sup-lt} now depend on $\eps$. This is why the constant $C$ in the previous display may depend on $\eps$ but not on $n$, $k$, $\ell$ or $m$.

By \eqref{eq:phi}, \eqref{eq:unkl} (which implies $\exp^{(p)}((u^{n,k}_\ell)^d)-\exp^{(p)}((u^{n,k}_{\ell-1})^d)\leq 1$ for all $p=1,\dots,N$), \eqref{eq:unkl-2} and \eqref{eq:mnkl} together with the mean-value theorem and the bound $x\geq \frac12(x+1)$ for $x>1$,  we deduce that 
\begin{align*}
&\sum_{\ell=1}^{\ell^n_k} m^{n,k}_\ell(\exp^{(N)}((u^{n,k}_\ell)^d))^{-d} \Biggl(\prod_{p=1}^{N-1} (\exp^{(p)}((u^{n,k}_\ell)^d))^{-1}\Biggr) (u^{n,k}_\ell)^{-\frac 2d \ga}\\
&\quad \geq \mathtoolsset{multlined-width=0.9\displaywidth} \begin{multlined}[t] C\sum_{\ell=1}^{\ell^n_k} \bigl((\exp^{(N)}((u^{n,k}_{\ell})^d))^d -(\exp^{(N)}((u^{n,k}_{\ell-1})^d))^d\bigr)(\phi^n_k)^{d-1}(\exp^{(N)}((u^{n,k}_\ell)^d))^{-d} \\
 \times\Biggl(\prod_{p=1}^{N-1} (\exp^{(p)}((u^{n,k}_\ell)^d))^{-1}\Biggr) (u^{n,k}_\ell)^{-\frac 2d \ga}\end{multlined}\\
&\quad\geq\mathtoolsset{multlined-width=0.9\displaywidth} \begin{multlined}[t] Ce^{(\theta-1)(d-1)n}\sum_{\ell=1}^{\ell^n_k}\biggl(\frac{\exp^{(N)}((u^{n,k}_{\ell-1})^d)}{1+\exp^{(N)}((u^{n,k}_{\ell-1})^d)}\biggr)^{d-1}\Biggl(\prod_{p=1}^N  \frac{\exp^{(p)}((u^{n,k}_{\ell-1})^d)}{1+\exp^{(p)}((u^{n,k}_{\ell-1})^d)}\Biggr)\\
 \times(u^{n,k}_{\ell-1})^{d-1}(u^{n,k}_\ell)^{-\frac 2d \ga}(u^{n,k}_\ell-u^{n,k}_{\ell-1})\end{multlined}\\
&\quad\geq Ce^{(\theta-1)(d-1)n} e^{n(d-1)-\frac 2d \ga n} (u^{n,k}_{\ell^n_k}-u^{n,k}_0)\geq e^{(\theta-1)(d-1)n} e^{n(d-1)-\frac 2d \ga n} e^{\theta n}=e^{(\theta d-\frac 2d\ga)n}.
\end{align*}
In summary,
\[ \P\Biggl( \sup_{x\in Q(x^n_k,e^{\theta n})} \frac{Y(t,\exp^{(N)}(x^d))}{ \lvert\exp^{(N)}(x^d)\rvert^{\frac{d^2}2}(\prod_{p=1}^{N-1} \lvert\exp^{(p)}(x^d)\rvert^{\frac d2})|x|^{\ga}} \leq K\Biggr)\leq \exp(-Ce^{(\theta d-\frac 2d\ga)n}).  \]
This bound is analogous to \eqref{eq:help5} in the proof of Theorem~\ref{thm:dim}. The subsequent arguments apply in our current situation as well and complete the proof of Step 2.  
\epr

By contrast, under a multiplicative noise, if we consider the peaks of the solution to \eqref{eq:SHE} on a lattice or if we consider the peaks on $\R^d$ and the noise is sufficiently heavy-tailed, they are \emph{not} self-similar in terms of their multifractal behavior. Given the tail estimates of Section~\ref{sec:tail}, the proof is very similar to that of the previous theorem (with $N=1$), which is why we omit it.

\bthm\label{thm:dim3} Let $M\in[0,\infty)$.
\benu 
\item[(i)] Assume Condition~\ref{cond:sup} and  Condition~\ref{cond:ht} with some $\al\in(0,\frac2d)$. 
Define the sets
\beq\label{eq:EMb}\calf^{(\times, \mathrm{c})}_M = \Bigl\{ x\in\R^d: Y(t,x)\geq \lvert x\rvert^{\frac d\al}e^{M(\log \lvert x \rvert)^{1/(1+\theta_\al)}} \Bigr\}.  \eeq
If $L_0$ is the number from \eqref{eq:L} and $M_1$ is the number from \eqref{eq:M1}, then almost surely,
\beq\label{eq:EM1b}\begin{split} \Dim_{\HH|\MM}\Bigl(\exp^{(1)}((\log^{(1)}(\calf^{(\times, \mathrm{c})}_M))^{\frac{1}{1+\theta_\al}})\Bigr)&\leq L_0(\tfrac{d}\al)^{\frac1{1+\theta_\al}}-\al M,\\
 \Dim_{\HH|\MM}\Bigl(\exp^{(1)}((\log^{(1)}(\calf^{(\times, \mathrm{c})}_M))^{\frac{1}{1+\theta_\al}})\Bigr)&\geq M_1(\tfrac{d}\al)^{\frac1{1+\theta_\al}}-\al M. \end{split}\eeq
\item[(ii)] Assume   Condition~\ref{cond:ht} with some $\al\in(0,1+\frac2d)$. 
Define the sets
\beq\label{eq:EM}\calf^{(\times, \mathrm{d})}_M = \Bigl\{ x\in\Z^d: Y(t,x)\geq \lvert x\rvert^{\frac d\al}e^{M(\log \lvert x \rvert)^{1/(1+\theta_\al)}} \Bigr\}.  \eeq
If $M_1$ and $M_2$ are the numbers from \eqref{eq:M1}, then almost surely,
\beq\label{eq:EM1}\begin{split} \Dim_{\HH|\MM}\Bigl(\exp^{(1)}((\log^{(1)}(\calf^{(\times, \mathrm{d})}_M))^{\frac{1}{1+\theta_\al}})\Bigr)&\leq M_2(\tfrac{d}\al)^{\frac1{1+\theta_\al}}-\al M,\\
 \Dim_{\HH|\MM}\Bigl(\exp^{(1)}((\log^{(1)}(\calf^{(\times, \mathrm{d})}_M))^{\frac{1}{1+\theta_\al}})\Bigr)&\geq M_1(\tfrac{d}\al)^{\frac1{1+\theta_\al}}-\al M. \end{split}\eeq
\item[(iii)] Assume   Condition~\ref{cond:lt} with   $\al=1+\frac2d$. 
Define the sets
\beq\label{eq:EMc}\calf^{(\times, \mathrm{d})}_M = \Bigl\{ x\in\Z^d: Y(t,x)\geq \lvert x\rvert^{\frac {d^2}{2+d}}e^{M(\log \lvert x \rvert)(\log\log\log \lvert x \rvert)/\log\log \lvert x \rvert} \Bigr\}  \eeq
and the function $H:\R^d\to\R^d$, $H(x)=\exp^{(1)}(\log^{(1)}(x)\log^{(3)}(x)/\log^{(2)}(x))$.
If $M_1$ and $M_2$ are the numbers from \eqref{eq:M2}, then almost surely,
\beq\label{eq:EM1c}\begin{split} \Dim_{\HH|\MM}\Bigl(H(\calf^{(\times, \mathrm{d})}_M) )\Bigr)&\leq M_2\tfrac{d^2}{2+d}-(1+\tfrac 2d) M,\\
 \Dim_{\HH|\MM}\Bigl(H(\calf^{(\times, \mathrm{d})}_M) )\Bigr)&\geq M_1\tfrac{d^2}{2+d}-(1+\tfrac 2d) M.  \end{split}\eeq
\eenu
\ethm

\begin{appendix}
	\section*{Technical results}\label{appn}
	\setcounter{theorem}{0}
		\setcounter{equation}{0}
In this appendix, we state and prove some technical results.
\blem\label{lem:Stirling}
For every $\al,\beta,\ga>0$, there is $C_{\al, \gamma}\in(0,\infty)$ such that for all 
$z\geq0$,
$$ \sum_{N=0}^\infty \frac{z^N}{\Ga(\al N+\beta)^{ 1/\ga}} \leq \frac \ga\al C_{\al,\ga} e^{C_{\al,\ga} z^{\ga/\al}}. $$
One can choose $C_{\al,\ga} $ such that it is locally bounded  in $\al$ and $1/\ga$ and independent of $\beta$.
\elem
\bpr In this proof, we use $C_{\al,\gamma}$ to denote a positive constant that is locally 
bounded in $\al$ and $\gamma$, and whose value may change from line to line. Let $z_0$ be 
the unique minimum of the gamma function on the positive real line. Then $\Ga(\al 
N+\beta)\geq \Ga((\al N+\beta)\vee z_0)\geq \Ga(\al N \vee z_0)$, so by Stirling's formula 
for gamma functions, there is $C\in(0,\infty)$ such that 
\begin{align*}\sum_{N=0}^\infty \frac{z^N}{\Ga(\al N+\beta)^{1/\ga}}&\leq C \sum_{N=0}^\infty(\al N\vee z_0)^{\frac1{2\ga}}  \frac {(e^{\al/\ga} z)^N}{(\al N)^{\al N/\ga}}\\
& \leq C \sum_{N=0}^\infty({\al N\vee z_0})^{\frac1{2\ga}} \frac {(e^{ \al/\ga} z)^Ne^{N/(\ga e)}}{N^{\al N/\ga}} \leq C_{\al,\ga} \sum_{N=0}^\infty \frac{(C_{\al,\ga}  z)^N}{N^{\al N/\ga}},\end{align*}
where we used the bound $\al^{\al N/\ga}\geq e^{-N/(\ga e)}$ for the second step. The function 
$x\mapsto (C_{\al,\ga}  z)^x/x^{\al x/\ga}$ has a unique maximum at $x=(C_{\al,\ga}   z)^{\ga/\al}e^{-1}$. Thus, by integral approximation, a change of variable ($y=\al x/\ga$) and a Riemann sum approximation,
\begin{align*}&\sum_{N=0}^\infty \frac{z^N}{\Ga(\al N+\beta)^{1/\ga}}\leq C_{\al,\ga} \Biggl( \int_0^\infty \frac{(C_{\al,\ga}   z)^x}{x^{\al x/\ga}} \,\dd x + e^{\al (C_{\al,\ga}  z)^{\ga/\al}/(\ga e)}\Biggr)\\
&\qquad\leq C_{\al,\ga} e^{C_{\al,\ga}  z^{\ga/\al}} + C_{\al,\ga} \int_0^\infty\frac{(\al/\ga)^{y-1}(C_{\al,\ga}   z)^{\ga y/\al }}{y^y}\,\dd y\\
&\qquad\leq C_{\al,\ga} e^{C_{\al,\ga}  z^{\ga/\al}} + C_{\al,\ga} \Biggl( \sum_{N=1}^\infty \frac{(\al/\ga)^{N-1}(C_{\al,\ga}  z)^{\ga N/\al}}{N^N} +\frac{\ga}{\al}e^{\al (C_{\al,\ga}  z)^{\ga/\al}/(\ga e)} \Biggr)\\& \qquad\leq \frac\ga\al C_{\al,\ga}  e^{C_{\al,\ga}   z^{\ga/\al}}.\qedhere \end{align*}
\epr

\blem\label{lem:PZ} For $\al,\delta\in(0,1)$, $p>1$ and a positive random variable $X$ with $0<\E[X^p]<\infty$, we have
$$ \P(X>\delta\,\E[X])\geq (1-\delta)^{\frac p{p-1}}\frac{\E[X]^{\frac p{p-1}}}{\E[X^p]^{\frac 1{p-1}}},\qquad \E[X^\al]\geq 2^{-\al-\frac p{p-1}}\frac{\E[X]^{\al+\frac p{p-1}}}{\E[X^p]^{\frac1{p-1}}}. $$
\elem
\bpr Both inequalities are variants of the classical Paley--Zygmund inequality. The first 
one was proved in \cite[Lemma~7.3]{Khos} (the assumption $p\geq2$ in the mentioned 
reference was not needed in the proof). The second follows from the first by Markov's
inequality.
\epr

\begin{lemma} \label{lemma:iter-int}
	For $R > 1$, $\alpha > -1, \beta > -1$,
	\[
	\begin{split}
		H_{N;\alpha, \beta}(R) & = \int_0^R \dotsm \int_0^R (y_1 \dotsm y_N)^{\alpha}
		\biggl( \log \frac{1}{y_1 \ldots y_N} \biggr)^{\beta} \bone_{\{ y_1 \dotsm y_N \leq 1 \}}\,
		\dd y_1 \cdots \dd y_N\\ &= \sum_{i=0}^{N-1} c_{N,i} (\log R)^i,
	\end{split}
	\]
	where
	\[
	c_{N,i} = \frac{N^i \Gamma(N - i + \beta)}{i! (N - i -1)! (\alpha+1)^{N-i+\beta}}.
	\]
\end{lemma}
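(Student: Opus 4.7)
The plan is to reduce the $N$-fold integral to a one-dimensional integral in $T := -\log(y_1 \cdots y_N)$ via a logarithmic substitution, compute the resulting ``density'' of $T$ explicitly, and then expand.

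First, I would substitute $y_i = e^{-t_i}$ (so $dy_i = -e^{-t_i}\,dt_i$). Since $R > 1$, $y_i \in (0,R]$ becomes $t_i \in [-r,\infty)$ with $r = \log R$, and the constraint $y_1 \cdots y_N \leq 1$ becomes $\sum_i t_i \geq 0$. The Jacobian factor $e^{-\sum t_i}$ combines with $(y_1\cdots y_N)^\alpha = e^{-\alpha \sum t_i}$ to give $e^{-(\alpha+1)\sum t_i}$, while the logarithmic factor becomes $(\sum t_i)^\beta$. Hence
\[
H_{N;\alpha,\beta}(R) = \int_{[-r,\infty)^N} e^{-(\alpha+1)\sum_i t_i}\Bigl(\sum_i t_i\Bigr)^{\beta} \bone_{\{\sum_i t_i \geq 0\}} \, dt_1 \cdots dt_N.
\]

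Next, I would change variables from $(t_1,\ldots,t_N)$ to $(t_1,\ldots,t_{N-1},T)$ where $T = \sum_i t_i$ (Jacobian $1$). The integrand depends only on $T$, so it remains to compute, for fixed $T \geq 0$, the Lebesgue measure of the slice
\[
\bigl\{(t_1,\ldots,t_{N-1}) : t_i \geq -r, \ \textstyle\sum_{i<N} t_i \leq T + r \bigr\}.
\]
Setting $s_i = t_i + r$ converts this into the standard simplex $\{s_i \geq 0, \sum_{i<N} s_i \leq T + Nr\}$ in $\R^{N-1}$, whose volume is $(T+Nr)^{N-1}/(N-1)!$. Therefore
\[
H_{N;\alpha,\beta}(R) = \frac{1}{(N-1)!}\int_0^\infty e^{-(\alpha+1)T} T^\beta (T+Nr)^{N-1}\, dT.
\]

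Finally, I would expand $(T+Nr)^{N-1} = \sum_{i=0}^{N-1} \binom{N-1}{i}(Nr)^i T^{N-1-i}$ by the binomial theorem and identify each one-dimensional integral as a gamma function (using $\alpha > -1$ and $\beta > -1$):
\[
\int_0^\infty e^{-(\alpha+1)T} T^{N-1-i+\beta}\,dT = \frac{\Gamma(N-i+\beta)}{(\alpha+1)^{N-i+\beta}}.
\]
Collecting terms and recalling $r = \log R$ yields $H_{N;\alpha,\beta}(R) = \sum_{i=0}^{N-1} c_{N,i}(\log R)^i$ with the claimed coefficients. No step constitutes a serious obstacle: the only computation requiring care is the simplex volume in the second step, which is a textbook calculation; everything else is mechanical.
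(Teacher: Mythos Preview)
Your proof is correct and takes a genuinely different route from the paper. The paper proceeds by induction on $N$: a scaling substitution $u_i = y_N^{1/(N-1)} y_i$ yields the recursion $H_N(R) = \int_0^R y^{-1} H_{N-1}(Ry^{1/(N-1)})\,\dd y$, and the inductive step then splits into two parts --- the coefficients $c_{N+1,i}$ for $i\geq 1$ fall out by direct integration, while matching $c_{N+1,0}$ requires a further backward induction in $j$ (via integration by parts) down to the one-dimensional base case. Your approach bypasses all of this: the logarithmic change of variables immediately reduces the $N$-fold integral to a single integral in $T=\sum t_i$ against the $(N-1)$-dimensional slice volume, which is an elementary simplex volume; the binomial expansion then reads off all coefficients $c_{N,i}$ at once. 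Your argument is shorter and structurally more transparent, and it explains \emph{why} the answer is a polynomial in $\log R$ of degree $N-1$ (it comes from expanding $(T+N\log R)^{N-1}$). The paper's recursive approach, on the other hand, would generalize more readily if one needed asymptotics or identities for variants of $H_N$ where the integrand does not depend solely on the product $y_1\cdots y_N$.
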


\begin{proof}
	To ease notation, we suppress the subscripts $\alpha$ and $\beta$.
	Changing variables $u_i = y_N^{1/(N-1)} \, y_i$ for $i = 1,\ldots, N-1$, we obtain
	\begin{equation} \label{eq:H-recursion}
		\begin{split}
			H_N (R ) & = \mathtoolsset{multlined-width=0.75\displaywidth} \begin{multlined}[t] \int_0^R \biggl(  \int_{[0, Ry_N^{1/(N-1)}]^{N-1}}
			(u_1 \dotsm u_{N-1})^\alpha \\
			 \times
			\biggl( \log \frac{1}{u_1 \dotsm u_{N-1}} \biggr)^{\beta} \bone_{ \{ u_1 \dotsm u_{N-1} \leq 1 \}} y_{N}^{-1}
			\,\dd u_1 \cdots \,\dd u_{N-1} \biggr) \,\dd y_N \end{multlined}\\
			& = \int_0^R \frac{1}{y} H_{N-1}(Ry^{1/(N-1)}) \,\dd y
		\end{split}
	\end{equation}
	for all $R>0$.
	
	We prove the lemma by  induction. For $N = 1$, the statement is clear. Assume that the 
	statement holds for $N \geq 1$. Since
	\[
	\int_1^{R^{N+1}} \frac{1}{u}  ( \log u  )^i\,\dd u =
	\frac{(N+1)^{i+1}}{i+1}  ( \log R  )^{i+1}
	\]
	for $i \geq 0$, we can use \eqref{eq:H-recursion}, a change of variables $u = y R^N$ and the induction hypothesis to obtain
	\[
	\begin{split}
		H_{N+1}(R) &  = \int_0^R \frac{1}{y} H_{N}(Ry^{\frac1N}) \,\dd y = \int_0^1  \frac{1}{u} H_N(u^{\frac1N})\, \dd u + 
		\sum_{i=0}^{N-1} c_{N,i} \int_1^{R^{N+1}} \frac{1}{u}  ( \log u^{\frac1N}  )^{i}\,
		\dd u \\
		& = \int_0^1  \frac{1}{u} H_N(u^{\frac1N})\, \dd u + \sum_{i=0}^{N-1} c_{N,i} \frac{(N+1)^{i+1}}{N^i (i+1)}
		( \log R  )^{i+1}\\
		&=\int_0^1  \frac{1}{u} H_N(u^{\frac1N})\, \dd u + \sum_{i=0}^{N-1} c_{N+1,i+1}  
		( \log R  )^{i+1}.
	\end{split}
	\]
	Thus, it remains to show
	\beq\label{eq:toshow}
	c_{N+1, 0} = \int_0^1 \frac{1}{u} H_N(u^{\frac1N})\, \dd u.
	\eeq
	We claim that
	\begin{equation} \label{eq:c0-recursion}
		c_{N+1,0} = \int_0^1 \frac{ ( \log u^{-1}  )^j}{j!\, u} 
		H_{N-j} ( u^{1/(N-j)})\, \dd u,\qquad j=0,\ldots, N-1.
	\end{equation}
	When $j=0$, this becomes \eqref{eq:toshow}. Noting that $\frac{\dd}{\dd u} H_k(u) = ku^{-1}H_{k-1}(u^{1+1/(k-1)})$ for $u\in(0,1)$, one can show \eqref{eq:c0-recursion} using integration by parts and a backwards
	induction argument. Thus, it remains to verify \eqref{eq:c0-recursion} at the base case  $j = N-1$:
	\[
	\begin{split}
		\int_0^1 \frac{ ( \log u^{-1}  )^{N-1}}{(N-1)!\,u} 
		H_{1} ( u) \,\dd u  &= \int_0^1 \frac{ ( \log u^{-1}  )^{N-1}}{(N-1)!\, u} 
		\int_0^u y^\alpha  ( \log y^{-1}  )^\beta \,\dd y \,  \dd u \\
		&=  \int_0^1 \frac{ ( \log u^{-1}  )^{N}}{N!} 
		u^\alpha  ( \log u^{-1}  )^\beta  \,  \dd u \\
		&= \frac{\Gamma(N+1+\beta )}{N!\, (\alpha+1)^{N+1+\beta}}=c_{N+1,0}.\qedhere
	\end{split}
	\]
\end{proof}

\begin{lemma}\label{lem:decom}
For any $t>0$, 
the Poisson random measure $\mu$ can be decomposed into $\mu=\sum_{i=0}^\infty \mu_i$ such that 
\bit
\item the $\mu_i$'s are independent Poisson random measures,
\item $\mu_0$ is the restriction of $\mu$ to $[0,\infty)\times(\R^d\setminus (-2,2)^d)\times(0,\infty)$,
\item $\mu_i$ has intensity $\dd t \,\bone_{(-2,2)^d}(x)\,\dd x\,\la_i(\dd z)$ and  $m_0(\la_i)\leq 2$.
\eit
\end{lemma}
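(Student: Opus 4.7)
The plan is to (i) decompose the Lévy measure $\la$ into a countable sum of sub-measures $\la_i$ each with $m_0(\la_i)\leq 2$, and then (ii) apply the splitting property of Poisson random measures to obtain independent $\mu_i$'s with the prescribed intensities. The first step is purely measure-theoretic, and the second step is a standard consequence once the decomposition of $\la$ is in hand.

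For step (i), I first note that the Lévy integrability $\int(1\wedge z^2)\,\la(\dd z)<\infty$ gives $\la([1,\infty))<\infty$ and $\la([1/(k+1),1/k))\leq (k+1)^2\int_{(0,1)}z^2\,\la(\dd z)<\infty$ for every $k\geq 1$, so $\la|_{(0,1)}$ is $\sigma$-finite. I partition each interval $[1/(k+1),1/k)$ into finitely many Borel pieces of $\la$-mass at most $2$, using the left-continuous quantile of $\la$ restricted to that interval to subdivide its continuous part, and splitting any atom of mass $c>2$ into $\lceil c/2\rceil$ identical atoms of mass $c/\lceil c/2\rceil\leq 2$ at the same point. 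Enumerating the resulting pieces as $(\wt\la_i)_{i\geq 1}$ gives $\sum_i\wt\la_i=\la|_{(0,1)}$ with $\wt\la_i((0,1))\leq 2$. Setting $\la_1=\wt\la_1+\la|_{[1,\infty)}$ and $\la_i=\wt\la_i$ for $i\geq 2$ preserves $m_0(\la_i)\leq 2$ (since $m_0$ only sees $(0,1)$) and still yields $\sum_i\la_i=\la$.

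For step (ii), the restriction $\mu_0$ of $\mu$ to $[0,\infty)\times(\R^d\setminus(-2,2)^d)\times(0,\infty)$ is automatically a Poisson random measure with the claimed intensity, independent of $\mu':=\mu-\mu_0$, because the two live on disjoint Borel sets. It remains to write $\mu'$ as a sum of independent Poisson random measures $(\mu_i)_{i\geq 1}$ with the prescribed intensities. When the $\la_i$ have mutually disjoint supports (for instance, whenever $\la|_{(0,1)}$ has no atoms of mass exceeding $2$), each $\mu_i$ is simply the restriction of $\mu'$ to $[0,\infty)\times(-2,2)^d\times\mathrm{supp}(\la_i)$, and independence follows at once from the disjointness of supports. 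The one remaining case is when two $\la_i$ share a common atom $z_0\in(0,1)$, which arises precisely when step (i) has split a large atom of $\la$; then, on a possibly enlarged probability space, I independently color each atom of $\mu'$ above $z_0$ with label $i$ of probability $\la_i(\{z_0\})/\la(\{z_0\})$, and the marking (coloring) theorem for Poisson point processes guarantees that the resulting sub-measures are independent Poisson random measures of the correct intensities. This final coloring is the only point that requires auxiliary randomness, and handling atoms of $\la$ of mass larger than $2$ is the main (and rather mild) obstacle in the argument.
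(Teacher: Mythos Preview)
Your proof is correct and follows essentially the same two-step strategy as the paper: first decompose $\la$ into sub-measures $\la_i$ with $m_0(\la_i)\leq 2$, then obtain the $\mu_i$ by restriction and thinning of $\mu$. The only difference is cosmetic: the paper builds the $\la_i$ by slicing $(0,\infty)$ at the quantiles $z_\nu=\sup\{z>0:\la((z,\infty))>\nu\}$ of the tail function (so each slice automatically has total mass at most $2$, and atoms are handled by inserting Dirac pieces into the sequence), whereas you first partition $(0,1)$ into the dyadic-like intervals $[1/(k{+}1),1/k)$ and then subdivide each. Both constructions are valid; the paper's is slightly more direct since it avoids the preliminary partition, but your version has the advantage of making the coloring step for large atoms more explicit.
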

\bpr We first construct a decomposition $\la=\sum_{i=1}^\infty \la_i$ into pieces satisfying $m_0(\la_i)\leq2$ for all $i$, assuming that $\la((0,\infty))=\infty$ (if $\la((0,\infty))<\infty$, the construction is similar, with all but finitely many $\la_i$'s equal to $0$).
Define $z_0=\infty$ and $z_\nu=\sup\{z>0: \ov\la (z) > \nu\}$ for 
$\nu\in\N$, where $\overline \lambda(z)=\la((z,\infty))$. Clearly,
 $\overline \lambda(z_n) \leq n \leq \overline \lambda(z_n - )$.
Let $\lambda_1 = \lambda|_{(z_1, z_0)}$, and   assume that $z_{k+1} < z_k$ and that
$\lambda_{k+1}$ has already been  defined for some $k \geq 0$. If $z_{k+2} < z_{k+1}$, put
$\lambda_{k+2} = \lambda|_{(z_{k+2}, z_{k+1}]}$. Since $z_{k+1} < z_k$ implies 
$\overline \lambda (z_{k+1}) \geq k$, we have that 
\[
\lambda_{k+2}((0,\infty)) = \lambda( (z_{k+2}, z_{k+1}]) =
\overline \lambda(z_{k+2}) - \overline \lambda (z_{k+1}) \leq 2.
\]
If $z_{k+1} = z_{k+2}$, then let $\ell \geq 2$ be the number for which    
$z_{k+1} = z_{k+\ell} > z_{k+\ell +1}$. Then let 
\[
 \lambda_{k+2}  = \dots = \lambda_{k+\ell} = \delta_{z_{k+1}}, \qquad
 \lambda_{k+\ell + 1} = \lambda|_{(z_{k+\ell + 1}, z_{k+1})} + 
( \lambda( \{ z_{k+1} \}) - (\ell - 1) ) \delta_{z_{k+1}},
\]
where $\delta_x$ stands for the Dirac delta at $x$.
Note that $\ell-1\leq\lambda( \{ z_{k+1} \} ) \leq \ell $, so $\lambda_{k+\ell +1}$
is a positive measure. Furthermore, we have
\[
\lambda( (z_{k+\ell + 1}, z_{k+1} ) ) +\lambda( \{ z_{k+1} \} )  \leq \overline \lambda( z_{k+\ell + 1}) - 
\overline \lambda (z_{k+1} ) \leq k + \ell +1 - k = \ell + 1,
\]
which implies that $\lambda_{k+\ell +1}((0,\infty)) \leq 2$. This completes the construction of the decomposition $\la=\sum_{i=1}^\infty \la_i$. The $\mu_i$'s can now be obtained by restrictions and thinnings of $\mu$ (see \cite[Sect.~5]{PoiBook}).
\epr

\blem\label{lem:tech} For $m,N\in\N$ and $\beta>0$, let
\begin{align*}
Y^{(0)}(t,x)&=Y^{(0,m,\beta)}(t,x)=Y_<(t,x),\\
 Y^{(0,\beta)}_<(t,x)&=Y^{(0,\beta)}(t,x)=1,\quad u^{(0,\beta)}_<(s,y;t,x)=g(t-s,x-y)\end{align*}
and introduce  the following processes inductively:
\begin{align*}
	Y^{(N)}(t,x)&=Y_<(t,x)+\int_0^t\int_{\R^d} u_<(s,y;t,x)\bone_{\{\lvert x-y\rvert\leq \sqrt{t-s}\}} Y^{(N-1)}(s,y)\,\La_\geq(\dd s,\dd y),\\
	Y^{(m,\beta)}_<(t,x)&=1+\int_0^t\int_{\R^d} g(t-s,x-y)\bone_{\{\lvert x-y\rvert\leq \beta\sqrt{t-s}\}}Y^{(m-1,\beta)}_<(s,y)\,\La_<(\dd s,\dd y),\\		
	u^{(m,\beta)}_<(s,y;t,x)&=\mathtoolsset{multlined-width=0.75\displaywidth} \begin{multlined}[t] g(t-s,x-y)+\int_s^t\int_{\R^d} g(t-r,x-w)\\
		\times\bone_{\{\lvert x-w\rvert\leq \beta\sqrt{t-r}\}} u^{(m-1,\beta)}_<(s,y;r,w)\,\La_<(\dd r,\dd w),\end{multlined}\\
		Y^{(N,m,\beta)}(t,x)&=\mathtoolsset{multlined-width=0.75\displaywidth} \begin{multlined}[t] Y^{(m,\beta)}_<(t,x)+\int_0^t\int_{\R^d} u^{(m,\beta)}_<(s,y;t,x)\\
		\times\bone_{\{\lvert x-y\rvert\leq \sqrt{t-s}\}} Y^{(N-1,m,\beta)}(s,y)\,\La_\geq(\dd s,\dd y),\end{multlined}\\
Y^{(m,\beta)}(t,x)&=\mathtoolsset{multlined-width=0.75\displaywidth} \begin{multlined}[t] Y^{(m,\beta)}_<(t,x)+\int_0^t\int_{\R^d} u^{(m,\beta)}_<(s,y;t,x)\\
\times\bone_{\{\lvert x-y\rvert\leq \beta\sqrt{t-s}\}} Y^{(m-1,\beta)}(s,y)\,\La_\geq(\dd s,\dd y).\end{multlined}
\end{align*}
For every $t>0$, there is a constant $C\in(0,\infty)$ such that for all $p\in(1,1+\frac2d)$, $s\in(0,t)$, $x,y\in\R^d$, $\beta>0$, $N\in\N$ and $m\in\N$ with $\theta_pm>1$,
\beq\label{eq:tech}\begin{split}
&\E[\lvert 	Y_<(t,x)-Y_<^{(m,\beta)}(t,x)\rvert^p]^{\frac1p} \leq C \Bigl(e^{-C^{-1}\beta}+(C\theta_p^{-\frac 2p})^m m^{-\frac{\theta_p}{3p} m}\Bigr)e^{(C/ \theta_p)^{3/\theta_p}},\\
	&\E[\lvert u_<(s,y;t,x)-u_<^{(m,\beta)}(s,y;t,x)\rvert^p]^{\frac1p}\\&\qquad\leq C \Bigl(e^{-C^{-1}\beta}+(C\theta_p^{-\frac 2p})^m m^{-\frac{\theta_p}{3p} m}\Bigr)e^{(C/ \theta_p)^{3/\theta_p}}g(t-s,x-y). 
\end{split} \eeq
One can further choose $C$ in such a way that 
 \beq\label{eq:tech2}\begin{split}
\E[\lvert 	Y(t,x)-Y^{(m,\beta)}(t,x)\rvert^p]^{\frac1p} &\leq C \Bigl(e^{-C^{-1}\beta}+(C\theta_p^{-\frac 2p})^m m^{-\frac{\theta_p}{3p} m}\Bigr)e^{(CM_p(\la)/ \theta_p)^{3/\theta_p}},\\
	\E[\lvert Y^{(N)}(t,x)-Y^{(N,m,\beta)}(t,x)\rvert^p]^{\frac1p}&\leq C \Bigl(e^{-C^{-1}\beta}+(C\theta_p^{-\frac 2p})^m m^{-\frac{\theta_p}{3p} m}\Bigr)e^{(CM_p(\la)/ \theta_p)^{3/\theta_p}}. 
\end{split}\eeq
\elem
\bpr We start with the first inequality in \eqref{eq:tech}. Let $Y^{(m)}_<(t,x)=Y^{(m,\infty)}_<(t,x)$. Because 
\begin{multline*} \E[\lvert 	Y_<(t,x)-Y_<^{(m,\beta)}(t,x)\rvert^p]^{\frac1p}\leq \E[\lvert 	Y_<(t,x)-Y_<^{(m)}(t,x)\rvert^p]^{\frac1p}\\
	+\E[\lvert 	Y^{(m)}_<(t,x)-Y_<^{(m,\beta)}(t,x)\rvert^p]^{\frac1p}, \end{multline*}
we can bound the two terms on the right-hand side separately. Upon noticing that $Y^{(m)}_<(t,x)$ is, in fact, the sum of the first $m+1$ terms in the chaos expansion of $Y_<(t,x)$, we infer from \eqref{eq:ubar} that for $d\geq2$ and $p\in(1,1+\frac 2d)$, 
\beq\label{eq:est}\begin{split}
	&\E[\lvert 	Y_<(t,x)-Y_<^{(m)}(t,x)\rvert^p]^{\frac1p}\\
	&\qquad\leq C\Ga(\theta_p)^{\frac1p}\sum_{k=m+1}^\infty \frac{(C\Ga(\frac {\theta_p}3))^{k/p}}{\Ga(\frac{\theta_p}3 k +\theta_p)^{1/p}}
	\leq C \theta_p^{-\frac2p}\frac{(C\Ga(\frac{\theta_p}3))^{m/p}}{\Ga(\frac{\theta_p}3 m)^{1/p}}\sum_{k=1}^\infty \frac{(C\Ga(\frac {\theta_p}3))^{k/p}}{\Ga(\frac{\theta_p}3 k +\theta_p)^{1/p}}\\
	&\qquad\leq C \theta_p^{-1-\frac2p}\frac{(C\Ga(\frac{\theta_p}3))^{m/p}}{\Ga(\frac{\theta_p}3 m)^{1/p}} e^{C(\Ga(\frac {\theta_p}3))^{3/\theta_p}}\leq  (C\theta_p^{-\frac 2p})^mm^{-\frac{\theta_p}{3p}m}e^{(C/ \theta_p)^{3/\theta_p}},
\end{split}\eeq
where we used Lemma~\ref{lem:Stirling}, 
Stirling's formula for gamma functions and  the property $\Ga(x)\sim x^{-1}$ as $x\to0$. By \eqref{eq:ubar2} and \eqref{eq:mom-p}, the last bound remains true if $d=1$. 

Next, observe that 
\begin{align*}
	&Y^{(m)}_<(t,x)-Y^{(m,\beta)}_<(t,x)=\int_0^t\int_{\R^d} g(t-s,x-y)\bone_{\{\lvert x-y\rvert >\beta\sqrt{t-s}\}} Y^{(m-1)}_<(s,y)\,\La_<(\dd s,\dd y)\\
	&\qquad+ \int_0^t\int_{\R^d} g(t-s,x-y)\bone_{\{\lvert x-y\rvert\leq \beta\sqrt{t-s}\}}(Y^{(m-1)}_<(s,y)-Y^{(m-1,\beta)}_<(s,y))\,\La_<(\dd s,\dd y).
\end{align*}
Iterating this $m$ times, denoting $(t,x) = (t_{k+1}, x_{k+1})$, 
we derive the identity
\begin{align*}
	Y^{(m)}_<(t,x)-Y^{(m,\beta)}_<(t,x)&=\sum_{k=1}^m \int_{((0,t)\times\R^d)^k} \Biggl(\prod_{i= 3}^{k+1} g(\Delta t_i,\Delta x_i)\bone_{\{\lvert \Delta x_i\rvert\leq \beta\sqrt{\Delta t_i}\}}\Biggr)\\
&	\times g(\Delta t_2,\Delta x_2)\bone_{\{\lvert \Delta x_2\rvert >\beta\sqrt{\Delta t_2}\}}Y^{(m-k)}_<(t_1,x_1)\prod_{j= 1}^k \La_<(\dd t_j,\dd x_j).
\end{align*}
Representing $Y^{(m-k)}_<(t_1,x_1)$ itself in a series, we obtain
\beq\label{eq:Ym}
	Y^{(m)}_<(t,x)-Y^{(m,\beta)}_<(t,x)=\sum_{k=1}^{m}\sum_{\ell=1}^{m-k+1} \int_{((0,t)\times\R^d)^m} \prod_{i=2}^{k+\ell} g^{(\beta)}_{i,k,\ell}(\Delta t_i,\Delta x_i)
	\prod_{j=1}^m \La_<(\dd t_j,\dd x_j),
\eeq
where $g^{(\beta)}_{i,k,\ell}(t,x)=g(t,x)$ if $i=2,\ldots, \ell$, 
$g^{(\beta)}_{i,k,\ell}(t,x)=g(t,x)\bone_{\{\lvert x\rvert >\beta\sqrt{t}\}}$ if 
$i=\ell+1$ and $g^{(\beta)}_{i,k,\ell}(t,x)=g(t,x)\bone_{\{\lvert x\rvert 
\leq\beta\sqrt{t}\}}$ if $i= \ell+2,\ldots,k+\ell$. An important observation is now that 
the moment bounds on $Y_<(t,x)$ obtained in \cite[Proposition 6.1]{Berger21b} or through 
the series of arguments leading to \eqref{eq:mom-p} (if $d=1$ and $p\in(2,3)$) are, first 
of all, obtained by estimating each term in a series expansion of $Y_<(t,x)$ separately 
and, second of all, can only increase if the kernels $g^{(\beta)}_{i,k,\ell}$ are replaced 
by something larger. Therefore, bounding $g^{(\beta)}_{i,k,\ell}(t,x)\leq g(t,x)\leq 
Cg(2t,x)$ if $i\neq \ell+1$ and 
$$ g^{(\beta)}_{i,k,\ell}(t,x)\leq(2\pi t)^{-\frac d2} e^{-\frac{\lvert x\rvert^2}{4 t}}e^{-\frac{\beta}{4 t}} \leq Ce^{-C^{-1}\beta}g(2t,x)$$
if $i=\ell+1$, we conclude that
\begin{align*}
	\E[\lvert Y^{(m)}_<(t,x)-Y^{(m,\beta)}_<(t,x)\rvert^p]\leq Ce^{-C^{-1}\beta}e^{(C/\theta_p)^{3/\theta_p}}.
\end{align*}
Together with \eqref{eq:est}, this 
shows the first inequality in \eqref{eq:tech}; the proof of the second inequality in \eqref{eq:tech} and the proof of \eqref{eq:tech2} are similar and therefore skipped. 
%
\epr

\blem\label{lem:Y0Z}
Let $d=1$, $\beta>0$, $m\in\N$ and 
consider the processes $Y_0$ and $Z^{(m,\beta)}$ defined in \eqref{eq:Y0} and \eqref{eq:Zmb}, respectively. Assume Condition~\ref{cond:sup} and Condition~\ref{cond:lt} with $\al=2$. Then there exists a constant $C>0$ such that for any interval $I$ of length $1$ and $R>1$, we have that 
$$ \P\Biggl(\sup_{x\in I} \lvert Y_0(t,x)-Z^{(m,\beta)}(t,x)\rvert> R\Biggr)\leq C_{m,\beta}R^{-2}\log R,$$
where $C_{m,\beta}=C(e^{-C^{-1}\beta}+C^m m^{-m/12})$. If $M_{\al}(\la)<\infty$ for some $\al>2$, the factor $\log R$  can be omitted.
\elem
\bpr
The proof is very similar to how we dealt with $Y_1$ in the proof of Theorem~\ref{thm:sup-lt}. In fact, we only need to estimate the last integral in \eqref{eq:aux} with   $\wt Y'(s,y)$ replaced by (a copy of) $\lvert Y(s,y)-Y^{(m,\beta)}(s,y)\rvert$. By 
Markov's inequality and Lemma~\ref{lem:tech}, we have that
\begin{align*}
\P((t-s)^{-\frac 12}\lvert Y(s,y)-Y^{(m,\beta)}(s,y)\rvert z>\tfrac RN) &\leq C_{m,\beta}R^{-2}N^2z^2(t-s)^{-1}\wedge1\\
&\leq C_{m,\beta}N^2(R^{-2}z^2(t-s)^{-1}\wedge 1).
\end{align*}
Therefore,
\begin{align*}
&\int_0^t\int_{(-2,2)}\int_{(0,\infty)} \P((t-s)^{-\frac 12}\lvert Y(s,y)-Y^{(m,\beta)}(s,y)\rvert z>\tfrac RN)\,\dd s\,\dd y\,\la(\dd z)\\
&\qquad\leq 4 N^2 C_{m,\beta}\int_{(0,\infty)} \biggl(R^{-2}z^2\int_{R^{-2}z^2\wedge t}^t 
s^{-1}\,\dd s+  R^{-2}z^2\biggr)\,\la(\dd z)\\
&\qquad \leq 4 N^2 C_{m,\beta}(C\mu_2(\la)R^{-2}\log R + 2m_2^{\log}(\la) R^{-2}),
\end{align*}
which yields the desired bound since $m_2^{\log}(\la)<\infty$ by Condition~\ref{cond:sup}. If $M_\al(\la)<\infty$ for some $\al>2$, we can get rid of the logarithmic factor by using power $\al$ in Markov's inequality above.
\epr


\blem\label{lem:Lipschitz} Let $E\subseteq\R^d$ and $f\colon E\to \R^p$ be a Lipschitz continuous function such that 
\beq\label{eq:growth}\eps=\liminf_{x\in E,\lvert x\rvert_\infty \to\infty} \frac{\lvert f(x)\rvert_\infty}{\lvert x\rvert_\infty} >0.\eeq
Then $\Dim_\HH(f(E))\leq \Dim_\HH(E)$ and $\Dim_\MM(f(E))\leq \Dim_\MM(E)$.
\elem
\bpr The statement for the Hausdorff dimension is exactly \cite[Lemma~2.4]{Khoshnevisan17}. In order to obtain the statement concerning the Minkowski dimension, we notice that 
\[ \Dim_\MM (E) = \limsup_{n\to\infty} \frac1n \log_+\bigl\lvert \{q\in\Z^d: E\cap Q(0,e^n)\cap Q(q,1)\neq \emptyset\} \bigr\rvert, \]
which can be easily deduced from \cite[Prop.~2.5]{Khoshnevisan17b}. 
Let $A_E(n)$ be the set whose cardinality is counted in the previous line. Then $E\cap Q(0,e^m)\subseteq \bigcup_{q\in A_E(m)} Q(q,1)$ by definition and hence, $f(E\cap Q(0,e^m))\subseteq \bigcup_{q\in A_E(m)} f(Q(q,1))$ for every $m\in\N$.

If $C\in\N$ is larger than $\log \eps^{-1}$ and $n$ is large enough, then $f(x) \in Q(0,e^n)$  for some $x\in E$ implies $x\in Q(0,e^{n+C})$ by the growth assumption on $f$. Therefore, $f(E)\cap Q(0,e^n)\subseteq f(E\cap Q(0,e^{n+C}))\subseteq \bigcup_{q\in A_E(n+C)} f(Q(q,1))$. If $L$ is the Lipschitz constant of $f$ with respect to the supremum norm, then $f(Q(q,1))$ has at most diameter $L$ (in the same norm) and can therefore be covered by $L^p$ unit cubes, or $(L+1)^p$ unit cubes with integer corners. In total, we need at most $\lvert A_E(n+C)\rvert (L+1)^p$ such cubes to cover $f(E)\cap Q(0,e^n)$. Thus,
\begin{align*} \Dim_\MM(f(E))&=\limsup_{n\to\infty} \frac1n \log_+\lvert A_{f(E)}(n)\rvert \leq \limsup_{n\to\infty} \frac1n \log_+(\lvert A_E(n+C)\rvert (L+1)^p)\\ 
&= \limsup_{n\to\infty} \frac1n \log_+\lvert A_E(n+C)\rvert  = \Dim_\MM(E).\qedhere \end{align*}
\epr

\blem\label{lem:Lip} Let $\lvert \cdot\rvert$ and $\lVert \cdot\rVert$ be norms on $\R^d$ and $\exp^{(n)}(r)=\exp(\exp^{(n-1)}(r))$  for $n\in\N$ and $r\in\R$ (with $\exp^{(0)}(r)=r$). Then, for every $N\in\N$, the function 
\beq\label{eq:f} f\colon \R^d\setminus\{0\} \to \R^d,\qquad x\mapsto \frac{x}{\lvert x\rvert} \biggl(\log^{(N)}\biggl( \frac{\lvert x\rvert}{\lVert x\rVert} \exp^{(N)}(\lVert x\rVert^d)\biggr)\biggr)^{1/d},\eeq
satisfies \eqref{eq:growth} and
is  Lipschitz continuous on $\{x\in\R^d: \lvert x\rvert \geq s\}$ for some $s>0$. 
\elem
\bpr Because all norms are equivalent on $\R^d$, we have $C^{-1}\leq\lvert x\rvert/\lVert x\rVert\leq C$ for some $C>1$. 
Consider the mapping $h(r,s)=(\log^{(N)}(r\exp^{(N)}(s^d)))^{1/d}$ for $r\in(C^{-1},C)$ and $s>0$. For sufficiently large $s$ (so that $C^{-1}\exp^{(N)}(s^d)>e$), its partial derivatives are given by
\begin{align*}
\frac{\partial}{\partial r}h(r,s)&=\frac{d^{-1}(\log^{(N)}(r\exp^{(N)}(s^d)))^{1/d-1}}{r\prod_{p=1}^{N-1} \log^{(p)}(r\exp^{(N)}(s^d))},\\ \frac{\partial}{\partial s}h(r,s)&=\frac{s^{d-1}(\log^{(N)}(r\exp^{(N)}(s^d)))^{1/d-1}\prod_{p=1}^{N-1} \exp^{(p)}(s^d)}{\prod_{p=1}^{N-1} \log^{(p)}(r\exp^{(N)}(s^d))}.
\end{align*}
By induction on $p$, one can easily verify that $\log^{(p)}(r\exp^{(N)}(s^d))\geq \frac12 \exp^{(N-p)}(s^d)$ as soon as $s$ is large enough so that $\frac12 s^d>\log 2\vee \log C$. This shows \eqref{eq:growth} on the one hand and that the partial derivatives of $h$ are uniformly bounded for $r\in(C^{-1},C)$ and large $s$ on the other hand.

Moreover, by elementary estimates,
\begin{align*} \biggl\lvert \frac{x}{\lvert x\rvert}-\frac{y}{\lvert y\rvert}\biggr\rvert &= \frac{\bigl\lvert(x-y)\lvert y\rvert +y(\lvert y\rvert -\lvert x\rvert)\bigr\rvert}{\lvert x\rvert \lvert y\rvert} \leq \frac{2\lvert x-y\rvert}{\lvert x\rvert}. \\
\biggl\lvert   \frac{\lvert x\rvert}{\lVert x\rVert} -\frac{\lvert y\rvert}{\lVert y\rVert}    \biggr\rvert &= \frac{\bigl\lvert (\lvert x\rvert-\lvert y \rvert) \lVert y\rVert + \lvert y\rvert (\lVert y\rVert - \lVert x \rVert)\bigr\rvert}{\lVert x \rVert \lVert y\rVert} \leq \frac{\lvert x -y\rvert}{\lVert x\rVert} + \frac{\lvert y\rvert}{\lVert x\rVert \lVert y\rVert}\lVert x-y\rVert.
\end{align*}
By writing
\begin{align*}  f(x)-f(y)&= \biggl( \frac{x}{\lvert x\rvert}-\frac{y}{\lvert y\rvert}\biggr) h\biggl(\frac{\lvert x\rvert}{\lVert x\rVert},\lVert x\rVert\biggr)  + \frac{y}{\lvert y\rvert}\Biggl(h\biggl(\frac{\lvert x\rvert}{\lVert x\rVert},\lVert x\rVert\biggr)-h\biggl(\frac{\lvert y\rvert}{\lVert y\rVert},\lVert y\rVert\biggr)\Biggr), \end{align*}
the Lipschitz property of $f$ now follows from the previous estimates and a straightforward application of the mean-value theorem to the second difference above.
\epr

\end{appendix}

\begin{acks}[Acknowledgments]
We are thankful to Davar Khoshnevisan for discussions on 
Hausdorff and Minkowski dimension.
PK's research was supported by the János Bolyai Research Scholarship of the 
Hungarian Academy of Sciences.
\end{acks}

\bibliographystyle{abbrv}
\bibliography{as-heat}
 
\end{document}